\title{A criterion on the free energy for log-Sobolev inequalities in mean-field particle systems}
\author{Roland Bauerschmidt\footnote{Courant Institute of Mathematical Sciences, NYU. E-mail: {\tt bauerschmidt@cims.nyu.edu}.}
\and Thierry Bodineau \footnote{I.H.E.S., Universit\'e Paris-Saclay, CNRS, Laboratoire Alexandre Grothendieck. 
35 Route de Chartres, 91440 Bures-sur-Yvette, France. E-mail: {\tt bodineau@ihes.fr}.}
\and Benoit Dagallier\footnote{CEREMADE, Universit\'e Paris-Dauphine, PSL University. E-mail: {\tt dagallier@ceremade.dauphine.fr}.}
}
\begin{document}
\maketitle

\begin{abstract}
  For a class of mean-field particle systems,
  we formulate a criterion in terms of the free energy that implies uniform bounds on the log-Sobolev constant of the associated Langevin dynamics. 
  For certain double-well potentials with quadratic interaction, the criterion 
   holds up to the critical temperature of the model, and
  we also obtain precise asymptotics on the decay of the log-Sobolev constant when approaching the critical point.
  The criterion also applies to ``diluted'' mean-field models defined on sufficiently dense, possibly random graphs.
  We further generalize the criterion to non-quadratic interactions that admit a mode decomposition.
The mode decomposition is different from the scale decomposition of the Polchinski flow we used for short-range spin systems.
\end{abstract}

\section{Introduction}

Let $V : \R^d \to \R$, $W  : \R^d \times \R^d \to \R$ be symmetric, $C^2$ functions, and let $T>0$.  
We are interested in characterising the large time behaviour of the following Langevin mean-field dynamics for large $N$:
\begin{equation}
i \leq N, \qquad 
  dX_t^{i} = -\Big[\nabla V(X_t^{i}) + \frac{1}{NT} \sum_{j=1}^N \nabla_1 W(X_t^{i}, X_t^{j})\Big]\, dt + \sqrt{2}\,  dB_t^i ,
  \label{eq: meanfield_Langevin}
\end{equation}
where $\nabla_1$ denotes gradient with respect to the first coordinate and with the $B_t^i$ independent standard Brownian motions.  
The literature on this question and on the associated McKean--Vlasov equation, obtained as the limiting dynamics of $X_t^1$ as $N\to\infty$,
is extremely vast,   
see e.g.~the surveys \cite{zbMATH04211245,zbMATH07596816,zbMATH07596817} and, in the McKean--Vlasov case, 
the landmark paper~\cite{MR2053570}. 
Below we will exclusively discuss the interacting particle system~\eqref{eq: meanfield_Langevin}. 
We shall only mention the works most relevant to our setting, referring to the above works for additional bibliography. 

\medskip

Under suitable assumptions on the  potentials $V,W$ (referred to below as the confinement respectively the interaction potential),  
the law of the dynamics~\eqref{eq: meanfield_Langevin} converges to a unique invariant measure given by
\begin{equation}
m^N_T(dx) 
=
\frac{1}{Z^N_T}\exp\Big[-\frac{1}{2TN}\sum_{i,j=1}^N W(x_i,x_j)\Big] \prod_{i=1}^N\alpha_V(dx_i),
\label{eq_measure_nonquad}
\end{equation}
where $\alpha_V\in{\bf M}_1(\R^d)$ denotes the absolutely continuous probability measure 
\begin{equation}
\label{eq: def alpha V}
\alpha_V (dx) \propto e^{-V(x)}\, dx,
\end{equation}
and $\propto$ stands for equality up to a normalisation factor. 
Throughout the paper we always implicitly assume that $m^N_T$ is a probability measure:
\begin{equation}
\forall \, T>0,\qquad Z^N_T<\infty
.
\label{eq_integrability_ZNT}
\end{equation}
For large $N$, the behaviour of the dynamics and the measure $m^N_T$ are governed by the free energy $\cF_T(\rho)$, 
defined for an absolutely continuous probability measure $\rho(dx)=\rho(x)\, dx$ on $\R^d$ by:
\begin{equation}
  \cF_T(\rho) = \int_{\R^d} \rho(x) \log \rho(x) \, dx + \int_{\R^d} V(x) \, \rho(dx)\, +\frac{1}{2T} \int_{(\R^d)^2} W(x,y) \, \rho(dx)\, \rho(dy),
  \label{eq_def_free_energy}
\end{equation}
and equal to $+\infty$ if $\rho$ is not absolutely continuous. 
Under general conditions on $V,W$, 
it is known that $\cF_T$ admits at least one minimiser, see~\cite{LIU2020503} for the latest, most general results with references to earlier works. 
Moreover, if $T$ is large enough, 
then $\cF_T$ has a unique minimiser, 
see, e.g., \cite{MR2053570,Guillin2022UniformPA}. 
Conversely, minimisers (local or global) for small enough $T$ are in general not unique~\cite{MR4604897,monmarcheReygner2024localLSI}. 
In statistical mechanics terms the existence of a temperature $T_c\in(0,\infty)$ which separates regions where uniqueness and non-uniqueness hold corresponds to a phase transition:
\begin{equation}
T_c := \inf \Big\{ T>0 : \mathcal F_{T'}\text{ has a unique global minimiser for each } T'> T\Big\} .
\label{eq_def_beta_c}
\end{equation}

We are interested in relating this critical temperature to relaxation properties of the dynamics~\eqref{eq: meanfield_Langevin}. 
The mean-field measure $m^N_T$ of~\eqref{eq_measure_nonquad} is said to satisfy a log-Sobolev inequality with constant $\gamma>0$ if,  for any $C^\infty$ compactly supported $F:(\R^d)^N\to\R_+$,
\begin{equation}
\label{eq: LSI definition}
\ent_{m^N_T}(F)
\leq 
\frac{2}{\gamma} \int |\nabla \sqrt{F}|^2\, dm^N_T,
\end{equation}
with $\ent_{m^N_T}(F)=\E_{m^N_T}[F\log F]-\E_{m^N_T}[F]\log\E_{m^N_T}[F]$. 
Under mild conditions on $V,W$, this inequality holds for an optimal constant $\gamma^N_{\mathrm{LS}}(T) >0$
(see e.g.~\cite{MR1971582} for background on log-Sobolev inequalities in statistical mechanics context).

\medskip

This paper focuses on deriving uniform in $N$ estimates of the log-Sobolev constant from which quantitative   controls on the relaxation of the Langevin dynamics \eqref{eq: meanfield_Langevin} follow.
 This also applies to the limiting McKean--Vlasov equation, see for example the discussion in \cite{Guillin2022UniformPA}.
Our main interest is in bounding the log-Sobolev constant under assumptions that only involve the free energy $\cF_T$.

The question of uniform bounds on the log-Sobolev constant has already received a lot of attention.  
The equilibrium phase transitions are determined at the macroscopic level by the mean-field functional $\cF_T$ \eqref{eq_def_free_energy} which records the contribution of the interaction and the entropy of the system. 
In general, 
non-convexity of the interaction $\rho\mapsto \int W\rho^{\otimes 2}$ may create a phase transition depending on the temperature $T$. 
In this case, the log-Sobolev constant will vanish with $N$. 
In fact, even in absence of a phase transition, the existence of local minima for $\cF_T$ will lead to a metastable behaviour of the dynamics and the log-Sobolev constant in \eqref{eq: LSI definition} is expected to also vanish with $N$,   
a fact established quite generally in~\cite{MR4604897}.  
At sufficiently high temperature/small interaction one expects neither phase transition nor metastability and therefore uniform bounds on the log-Sobolev constants should hold, 
as was shown for a large class of $V,W$ in \cite{Guillin2022UniformPA}. 
Very recently, the log-Sobolev inequality was derived for possibly large but flat convex interactions (see~\eqref{eq: def flat convex}) in  \cite{wang2024LSI,chewi2024LSI}. 
Building on the result of \cite{wang2024LSI} perturbations to the flat convex case were studied in  \cite{monmarche2024LSI}. 
In this work, under various assumptions on the confinement and interaction potentials $V, W$ and on the temperature parameter $T>0$, we are going to  relate the scaling of the log-Sobolev constant \eqref{eq: LSI definition} to conditions on the mean-field functional $\cF_T$ \eqref{eq_def_free_energy}.

Our approach closely follows the strategy introduced in \cite{MR3926125} based on renormalisation ideas, 
exposed in much greater generality in the survey~\cite{10.1214/24-PS27}. 
The method of~\cite{MR3926125} was  applied in \cite{zbMATH07286868} to analyse in depth the behaviour of the spectral gap for discrete mean-field models and its precise divergence in $N$ close to critical regimes.
Here we use it to study the mean-field model~\eqref{eq_measure_nonquad} defined in the continuum, 
but also models beyond the strict (fully connected) mean-field setting. 
Indeed, the strategy in \cite{MR3926125,10.1214/24-PS27} is well suited to studying models with general, possibly random interactions as it relies only on the spectral structure of the interaction matrix. 
It has for instance been extended in \cite{Bauerschmidt:2024aa} to study Kawasaki dynamics of the Ising model on random regular graphs and we extend it here to continuous models on random graphs with sufficiently large degrees.

For quadratic interactions, either of mean-field type or on suitable random graphs, 
we will show that for a large class of models  the log-Sobolev inequality can be analysed up to the critical point, 
characterised as above in terms of the free energy functional,
see Theorem~\ref{thm: quadratic interaction} in the quadratic case and Theorem~\ref{thm: quadratic interaction random} on random graphs. 
For this, we use a spectral decomposition of the interaction matrix to reduce, in the large $N$ limit, the complexity of the microscopic dynamics  to the analysis of a single (slow) mode which determines the macroscopic behaviour.
This spectral decomposition is trivial for fully connected models and otherwise relies on expander properties of random graphs with large degrees.
Similar strategies based on spectral decomposition of the measure have recently been employed to great success for discrete models, 
see e.g.~
\cite{MR4408509,10.1214/18-EJP159} and most recently in~\cite{Anari2024TrickleDownIL,
Alaoui2023FastRO,
liu2024fastmixingsparserandom}.

For non-quadratic interactions, the dimensional reduction is less straightforward and we focus on fully connected graphs, 
i.e., on the mean-field measure~\eqref{eq_measure_nonquad}. 
For flat convex interactions, 
there is no phase transition as  the mean-field functional $\cF_T$ \eqref{eq_def_free_energy} is strictly convex at all temperatures. 
In this case the log-Sobolev inequality was derived  in  \cite{wang2024LSI,chewi2024LSI}.
We consider a specific kind of non-convex interactions for which the mode decomposition used in the quadratic interaction case can be generalised. 
By projecting the  mean-field functional $\cF_T$ on the modes, we obtain a 
criterion involving only $\cF_T$ which implies that the log-Sobolev inequality holds uniformly in $N$.

In specific instances, using detailed features of these models,
  we were previously able to analyse short-range spin and field theory models, such as continuum limits which arise
  as invariant measures of singular SPDEs or critical Ising models in $d\geq 5$, see
  \cite{MR4061408,MR4303014,MR4720217,MR4705299} and \cite{10.1214/24-PS27} for an introduction.
  An essential feature in the analysis of these models is a \emph{scale decomposition} in terms of the Polchinski flow \cite{10.1214/24-PS27}.
  For mean-field particle systems, the perspective is different.
  In the interpretation of the particle system as a spin system (with spins taking values in $\R^d$ corresponding to particle positions),
  there is essentially only a single scale (the mean field).
  On the other hand, possibly more complicated particle interaction is captured by the structure of the interaction potential
  which can now have different modes (compared to the spins systems which have short-range, but usually quadratic interaction potentials).
  This requires a \emph{mode decomposition} (instead of scale decomposition)
  that we explore in the mean-field setting in this paper.
  For quadratic interactions there will only be a single mode.

\bigskip

The main results of this paper are stated in the next subsections as well as more specific references depending on the structure of the interactions. 
To avoid technical issues, we restrict to the following class of confinement potentials:
\begin{assumption}[Assumptions on $V$]
\label{ass_Vnonquad}
The potential $V\in C^2(\R^d,\R)$ can be decomposed as $V=V_c + \tilde V$, 
where $V_c\in C^2(\R^d,\R)$ satisfies $\He V_c\geq \id$ and where  $\tilde V$ is Lipschitz or bounded.
\end{assumption}

\subsection{Quadratic interactions}
\label{sec: Quadratic interactions intro}

We first consider the simplest case of a quadratic interaction of the form $W(x,y) = - (x,y)$ which leads to a single mode (and a single scale).
The continuous Curie--Weiss model (see~\eqref{eq: potentiel x4-x2} below) is the prototypical example in this class. 
The results in this case generalise the method of \cite{MR3926125,10.1214/24-PS27} (in the mean-field case)
and prepare for the developments of Sections~\ref{sec: quadratic diluted} and \ref{sec: general interactions} by providing a new perspective that focuses on the mean-field free energy functional.

For $m \in \R^d$, define the (one mode) coarse grained free energy as: 
\begin{equation}
\label{eq: coarse grained m}
\hat \cF_T(m) =
\inf \left\{ \cF_T(\rho), \quad \rho \ \text{such that} \int x\, \rho(dx)=m \right\} .
\end{equation}
Our goal is to relate the  log-Sobolev inequality of the mean-field measure $m^N_T$ \eqref{eq_measure_nonquad} to properties of the free energy $\hat \cF_T$,
i.e.~to macroscopic properties of the system.
In particular, for $T>T_c$, we are going to assume that the gradient flow associated with  $\hat \cF_T$,
\begin{equation}
\label{eq: gradient flow m}
\dot m_t = - \nabla \hat \cF_T(m), \quad    m_0 \in\R^d,
\end{equation}
relaxes exponentially fast to the global minimum $m^\star$, 
 i.e., $\cF(m_t)-\cF(m^\star) \leq e^{-2\gamma t}(\cF(m_0)-\cF(m^\star))$.
It is known (see \cite{karimi2016linear,CheStr24PL} for references) that this exponential relaxation of the dynamics is equivalent to the following  Polyak-\L ojasiewicz inequality with constant $\gamma= \gamma_{\rm PL}$,
\begin{equation}
\label{eq: Polyak-Lojasiewicz hat F}
\hat \cF_T (m) - \hat\cF_T (m^\star) \leq \frac{1}{2\gamma_{\rm PL}} \| \nabla \hat\cF_T (m) \|^2, \quad \forall  m \in\R^d.
\end{equation}
It is implied  by uniform convexity of $\hat \cF_T$  (but more general). 
In addition, it is shown in~\cite[Theorem~1]{CheStr24PL} that  inequality \eqref{eq: Polyak-Lojasiewicz hat F} implies the following log-Sobolev inequality.
\begin{lemma}{(\cite[Theorem 1]{CheStr24PL})}
The Polyak-\L ojasiewicz inequality~\eqref{eq: Polyak-Lojasiewicz hat F} holds with constant $\gamma_{\rm PL}>0$ if and only if the probability measure $\propto e^{-N\hat \cF_T(m)}\, dm$ has a log-Sobolev constant $\gamma_{\rm PL}N(1+o_N(1))$. 
\end{lemma}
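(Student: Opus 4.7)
The plan is to identify the asymptotic log-Sobolev constant of
$\mu_N(dm)\propto e^{-N\hat\cF_T(m)}\,dm$ with the exponential relaxation rate of the gradient flow
\eqref{eq: gradient flow m}, which by the equivalence of PL and linear convergence
\cite{karimi2016linear} recalled above equals $2\gamma_{\rm PL}$.
The natural bridge is the overdamped Langevin dynamics
$dM_t=-\nabla\hat\cF_T(M_t)\,dt+\sqrt{2/N}\,dB_t$ with invariant measure $\mu_N$:
its free energy decays at rate twice the LSI constant of $\mu_N$ (in the chosen time scale),
and its macroscopic limit as $N\to\infty$ is precisely the deterministic gradient flow.

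\textbf{PL $\Rightarrow$ LSI.}
Assuming PL with constant $\gamma_{\rm PL}$ together with the $C^2$ regularity of $\hat\cF_T$,
one derives the quadratic growth bound
$\hat\cF_T(m)-\hat\cF_T(m^\star)\geq \tfrac{\gamma_{\rm PL}}{2}\|m-m^\star\|^2$
and in particular $\He\hat\cF_T(m^\star)\geq \gamma_{\rm PL}\id$.
On a ball $B_\varepsilon$ around $m^\star$ the Hessian of $N\hat\cF_T$ is at least
$N(\gamma_{\rm PL}-o_\varepsilon(1))\id$, so Bakry--\'Emery applied to the restriction of $\mu_N$
yields an LSI constant $\geq N(\gamma_{\rm PL}-o_\varepsilon(1))$;
outside $B_\varepsilon$ the quadratic growth gives exponentially small mass $\mu_N(B_\varepsilon^c)\leq e^{-cN\varepsilon^2}$,
and a Holley--Stroock-type decomposition combines the interior and exterior pieces.
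Letting $\varepsilon=\varepsilon_N\downarrow 0$ slowly with $N$ yields the lower bound $\gamma_{\rm PL}N(1+o_N(1))$.
For the matching upper bound one tests the LSI with a function $F$ concentrated along the direction realising the
PL infimum, which forces the LSI constant to be no larger than $\gamma_{\rm PL}N(1+o_N(1))$.

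\textbf{LSI $\Rightarrow$ PL.}
Given LSI with constant $\gamma_N=\gamma_{\rm PL}N(1+o_N(1))$, the relative entropy along the
Langevin dynamics above decays at rate $2\gamma_{\rm PL}(1+o_N(1))$ uniformly in the initial condition.
Letting $N\to\infty$, the noise in $M_t$ vanishes and the semigroup collapses to \eqref{eq: gradient flow m},
so the entropy-decay inequality degenerates into the exponential decay of
$\hat\cF_T(m_t)-\hat\cF_T(m^\star)$ at rate $2\gamma_{\rm PL}$ for arbitrary initial $m_0$,
which is PL with constant $\gamma_{\rm PL}$ by the criterion of \cite{karimi2016linear}.
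The main obstacle is making this vanishing-noise limit uniform enough to preserve the optimal constant:
one must track the $o_N(1)$ error in the LSI against the entropy produced along Langevin trajectories
started from concentrated initial data near an arbitrary $m_0$, which requires sharp uniform-in-initial-condition
large deviations estimates for the diffusion and ensuring that the local quadratic Laplace behaviour of $\mu_N$
near $m^\star$ is compatible with global PL rather than merely with the Hessian eigenvalue at the minimum.
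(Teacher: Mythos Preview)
The lemma is not proved in the paper at all: it is stated with an explicit attribution to \cite[Theorem~1]{CheStr24PL} and then simply invoked later (in the proof of the proposition on the renormalised measure). There is no in-paper argument to compare your proposal against.

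On the substance of your sketch: the PL $\Rightarrow$ LSI direction via local Bakry--\'Emery near the minimiser plus an exponentially small tail is the right shape, but the phrase ``Holley--Stroock-type decomposition'' is a misnomer---Holley--Stroock handles bounded multiplicative perturbations of the density, not spatial region decompositions, so you would actually need a localisation/conditioning argument (or a Lyapunov-function method) to glue the ball and its complement while preserving the sharp constant. Your LSI $\Rightarrow$ PL direction via the vanishing-noise limit of the Langevin dynamics is only a heuristic as written: you correctly flag the obstacle (uniformity of the $o_N(1)$ error along trajectories started from concentrated initial data near an arbitrary $m_0$) but do not resolve it, and the entropy-decay statement for the diffusion does not obviously survive the $N\to\infty$ limit as a decay statement for $\hat\cF_T(m_t)$ without further work. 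For the actual argument you should consult the cited reference directly.
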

The following theorem shows that inequality \eqref{eq: Polyak-Lojasiewicz hat F} implies a log-Sobolev inequality for the mean-field measure $m^N_T$ \eqref{eq_measure_nonquad} uniformly in $N$.

\begin{theorem}[Quadratic interaction]
\label{thm: quadratic interaction}
Let the confinement potential $V$ satisfy Assumption \ref{ass_Vnonquad} and  the interaction be given by:
\begin{equation}
\label{eq: potentiel quadratique}
 W(x,y) = - (x,y) ,\qquad
 x,y  \in \R^d.
\end{equation}
Let $T>T_c$ (defined in \eqref{eq_def_beta_c}) and assume that  $\hat\cF_T$ satisfies a Polyak-\L ojasiewicz inequality \eqref{eq: Polyak-Lojasiewicz hat F}. 
Then the measure $m^N_T$ satisfies a log-Sobolev inequality with a constant independent of $N$.  
\end{theorem}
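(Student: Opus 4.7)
The plan is to execute the renormalisation strategy of \cite{MR3926125,10.1214/24-PS27} in the simplest mean-field setting, with a \emph{single} mode extracted from the quadratic interaction via a Hubbard--Stratonovich transformation. Since $W(x,y)=-(x,y)$ involves the particles only through $S=\sum_i x_i$, the Gaussian identity
\[
  \exp\!\Big(\frac{|S|^2}{2TN}\Big)\propto \int_{\R^d} \exp\!\Big(-\frac{N|h|^2}{2T}+\frac{h\cdot S}{T}\Big)\,dh
\]
represents $m^N_T$ as the $x$-marginal of a joint measure on $(h,x)$. Conditionally on $h$, the particles are i.i.d.\ under $\mu_h\propto e^{-V(x)+h\cdot x/T}\,dx$; the marginal of $h$ is $\bar\pi^N_T\propto e^{-Ng(h)}\,dh$ with $g(h)=|h|^2/(2T)-\log Z_V(h)$ up to an additive constant. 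I then apply the standard conditional/marginal decomposition
\[
  \ent_{m^N_T}(F)=\E_{\bar\pi^N_T}\bigl[\ent_{\mu_h^{\otimes N}}(F)\bigr]+\ent_{\bar\pi^N_T}\bigl(\E_{\mu_h^{\otimes N}}[F]\bigr),
\]
handle the two pieces separately, and combine them into a single log-Sobolev inequality.

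For the conditional term, the tilted potential $V(x)-h\cdot x/T$ still satisfies Assumption~\ref{ass_Vnonquad} with the same strongly convex $V_c$ and a modified Lipschitz or bounded perturbation. Bakry--\'Emery combined with Holley--Stroock (bounded case) or its Aida--Shigekawa extension (Lipschitz case) therefore yields a log-Sobolev constant $\gamma_0>0$ for $\mu_h$ uniform in $h$, and tensorisation passes this to $\mu_h^{\otimes N}$. For the marginal term I need a log-Sobolev inequality for $\bar\pi^N_T$ with constant of order $N$. Setting $m(h)=\E_{\mu_h}[x]=T\nabla_h\log Z_V(h)$, direct computation gives $\nabla g(h)=(h-m(h))/T$, and Legendre duality between $\log Z_V$ and the Cram\'er rate function of $\alpha_V$ produces the two identities
\[
  \nabla \hat\cF_T(m(h))=\nabla g(h),\qquad g(h)-g(h^\star)=\hat\cF_T(m(h))-\hat\cF_T(m^\star)+\frac{T}{2}\|\nabla g(h)\|^2.
\]
Plugging the Polyak--\L ojasiewicz inequality~\eqref{eq: Polyak-Lojasiewicz hat F} into the second identity shows that $g$ itself satisfies PL, with constant $\gamma_{\rm PL}/(1+T\gamma_{\rm PL})$; by the lemma cited above, $\bar\pi^N_T$ then has LSI constant of order $N$.

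It remains to combine the two pieces. The second entropy term is bounded, via the marginal LSI, by $(C/N)\int|\nabla_h\sqrt G|^2\,d\bar\pi^N_T$ with $G(h)=\E_{\mu_h^{\otimes N}}[F]$, and $\nabla_h G=T^{-1}\mathrm{Cov}_{\mu_h^{\otimes N}}(F,S)$. The main obstacle sits exactly here: a crude Cauchy--Schwarz gives $|\mathrm{Cov}(F,S)|^2\leq \mathrm{Var}(F)\,\mathrm{Var}(S)$ with $\mathrm{Var}(S)=O(N)$, which exactly cancels the $1/N$ from the marginal LSI and leaves no margin. One must instead exploit the product structure of $\mu_h^{\otimes N}$: writing $\mathrm{Cov}(F,S)=\sum_i\mathrm{Cov}(F,x_i)$ and bounding each coordinate-wise covariance through the Poincar\'e inequality for $\mu_h$ (implied by the conditional LSI) yields an $x$-Dirichlet form that can be absorbed into the conditional contribution. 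Organising this cancellation cleanly is precisely what the Polchinski-flow formulation of~\cite{10.1214/24-PS27} does, via a Gronwall estimate along a continuous interpolation $\nu_t\propto\exp\bigl(-\sum_i V(x_i)+\frac{t}{2N}|S|^2\bigr)$ from $\alpha_V^{\otimes N}$ at $t=0$ to $m^N_T$ at $t=1/T$, and this is the route I would follow.
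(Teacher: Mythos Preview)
Your proposal is essentially correct and follows the paper's strategy: Hubbard--Stratonovich to decouple, conditional/marginal entropy decomposition, uniform LSI for the tilted single-site measure $\mu_h$, and transfer of the Polyak--\L ojasiewicz inequality from $\hat\cF_T$ to the renormalised potential $g=V_T$ via the Legendre-type identities you wrote down (this is exactly Lemma~\ref{lemma_link_V_T_cF_T} and Proposition~\ref{prop: Polyak-Lojasiewicz}), then invoke \cite{CheStr24PL} to get an LSI of order $N$ for $\bar\pi^N_T$.

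The one place you diverge from the paper is the handling of the cross term $|\nabla_h\sqrt{G}|^2$, and here you make things harder than necessary. The paper does \emph{not} go through the Polchinski flow or a Gronwall argument; it simply applies the $L^\infty$ covariance estimate of \cite{MR2291434} (reproduced here as Lemma~\ref{lemm_BH}): under an LSI with constant $\gamma$ one has
\[
  \cov_\mu(F^2,H)^2 \leq \frac{4}{\gamma^2}\,\sup_x|\nabla H(x)|^2\,\E_\mu[F^2]\,\E_\mu[|\nabla F|^2].
\]
With $H=\sum_i x_i$, $|\nabla H|^2=N$, and this produces exactly the factor of $N$ needed to cancel the $1/N$ from the marginal LSI in one line (see the proof of Proposition~\ref{prop_LSI_quadratic_W}). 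Your coordinate-wise sketch is morally this same estimate, but note it rests on the \emph{log-Sobolev} inequality for $\mu_h$ (through exponential decay of Fisher information along the semigroup), not merely Poincar\'e as you wrote; a pure Poincar\'e bound on each $\cov(F,x_i)$ would not give the $\E[|\nabla\sqrt F|^2]$ form you need. The Polchinski-flow route you propose would also work, but it is a detour: the direct covariance lemma is both shorter and more elementary here.
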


For a large class of potentials, Theorem \eqref{thm: quadratic interaction} implies a log-Sobolev inequality up to $T_c$: 
\begin{corollary}[Double well confinement potentials]
\label{cor: double well}
Under the same conditions as the previous theorem,
let $d=1$, and consider the choice of confinement potential of the form
\begin{equation}
\label{eq: potentiel x4-x2}
 V(x)
 =
 \frac{x^4}{4} - \lambda\frac{x^2}{2}
 ,\qquad
 x,y,\lambda\in\R,
\end{equation}
or more generally suppose  that $V$ is in the GHS class, i.e.~satisfies Assumption~\ref{ass_doublewell} below.
Then for any $T>T_c$ (defined in \eqref{eq_def_beta_c}), 
the measure $m^N_T$ satisfies a log-Sobolev inequality with a constant independent of $N$.  
Moreover, the log-Sobolev constant vanishes  linearly at $T_c$: 
\begin{equation} 	
\limsup_{N\to\infty} \gamma^N_{\mathrm{LS}}(T)	
\leq 
c_1 (T-T_c)
,\qquad
\gamma^N_{\mathrm{LS}}(T)	\geq 
c_2 (T-T_c)
,\quad 
N\geq 1,
\end{equation}
for some  constants $c_1, c_2>0$.
\end{corollary}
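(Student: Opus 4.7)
The plan is to reduce, via Theorem~\ref{thm: quadratic interaction}, the lower bound on $\gamma^N_{\mathrm{LS}}(T)$ to a Polyak--\L ojasiewicz inequality for the coarse-grained free energy $\hat\cF_T$, and then to extract the sharp linear constant from the GHS-class structure of $V$. Since $W(x,y)=-(x,y)$, any $\rho$ with $\int x\, \rho(dx)=m$ contributes $-|m|^2/(2T)$ to the interaction in~\eqref{eq_def_free_energy}, so Lagrange duality in~\eqref{eq: coarse grained m} gives
\begin{equation}
  \hat\cF_T(m) = I_V(m) - \frac{|m|^2}{2T} + \mathrm{const},
  \qquad
  I_V(m) = \sup_{h\in\R}\bigl(h m - \psi_V(h)\bigr),
\end{equation}
where $\psi_V(h) = \log\int e^{hx - V(x)}\, dx$ is the log-partition function of the tilted single-particle measure. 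In particular $\hat\cF_T''(m) = 1/\psi_V''(h(m)) - 1/T$, with $h(m)$ the Legendre variable conjugate to $m$.

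The substantive input is the global bound $\psi_V''(h) \leq \psi_V''(0)$ for every $h \in \R$, which Assumption~\ref{ass_doublewell} should provide for the GHS class. For the explicit quartic $V(x)=x^4/4 - \lambda x^2/2$ this is the classical Griffiths--Hurst--Sherman inequality for the $\phi^4$ single-spin distribution; for the abstract GHS class it is what the assumption is designed to encode. Combined with the identification $T_c = \psi_V''(0)$, which comes from linearising the mean-field self-consistency equation $m = \psi_V'(m/T)$ at $m=0$ and coincides with~\eqref{eq_def_beta_c} for symmetric $V$, one obtains the pointwise bound
\begin{equation}
  \hat\cF_T''(m) \;\geq\; \frac{1}{\psi_V''(0)} - \frac{1}{T} \;=\; \frac{T - T_c}{T\, T_c}, \qquad \forall\, m \in \R.
\end{equation}
Thus $\hat\cF_T$ is strongly convex at all $T > T_c$ with constant of order $T-T_c$, which implies the Polyak--\L ojasiewicz inequality~\eqref{eq: Polyak-Lojasiewicz hat F} with $\gamma_{\rm PL} \geq (T-T_c)/(T\,T_c)$. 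Plugging this into Theorem~\ref{thm: quadratic interaction} and tracking constants yields the lower bound $\gamma^N_{\mathrm{LS}}(T) \geq c_2(T-T_c)$.

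For the matching upper bound I would test~\eqref{eq: LSI definition} with $F = (1+\varepsilon \bar X_N)^2$ and let $\varepsilon \to 0$, where $\bar X_N = N^{-1}\sum_{i=1}^N X^i$ is the empirical mean; this reduces the log-Sobolev inequality to the Poincar\'e inequality applied to $\bar X_N$, giving
\begin{equation}
  \gamma^N_{\mathrm{LS}}(T) \;\leq\; \frac{\E_{m^N_T}\bigl[|\nabla \bar X_N|^2\bigr]}{\mathrm{Var}_{m^N_T}(\bar X_N)} \;=\; \frac{1}{N\,\mathrm{Var}_{m^N_T}(\bar X_N)}.
\end{equation}
Since $\sum_{i,j} W(x_i,x_j) = -N^2 |\bar X_N|^2$, the marginal of $\bar X_N$ under $m^N_T$ is, up to Cram\'er-type control of the i.i.d.\ sum, proportional to $e^{-N\hat\cF_T(m)}\, dm$. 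A Laplace/Gaussian approximation around the unique minimiser $m^\star = 0$ then gives $N\,\mathrm{Var}_{m^N_T}(\bar X_N) \to 1/\hat\cF_T''(0) = T\,T_c/(T-T_c)$, whence $\limsup_N \gamma^N_{\mathrm{LS}}(T) \leq c_1(T-T_c)$ on any bounded neighbourhood of $T_c$.

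The main obstacle is the first step of the second paragraph: deducing the global, quantitative concavity bound $\psi_V''(h) \leq \psi_V''(0)$ from Assumption~\ref{ass_doublewell}. For the quartic potential this is classical, but for a general GHS-class $V$ its derivation from the precise hypotheses of that assumption is what pins down the exact linear rate at $T_c$; without it one would only recover some positive but non-sharp PL constant. The remaining steps -- strong convexity $\Rightarrow$ PL $\Rightarrow$ uniform LSI via Theorem~\ref{thm: quadratic interaction}, and Laplace asymptotics for a one-dimensional strongly convex free energy -- are then essentially routine.
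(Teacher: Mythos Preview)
Your approach is essentially the same as the paper's, with the same key input---the GHS variance bound $\psi_V''(h)\le\psi_V''(0)$, which the paper records as $\var_{\alpha_V^h}(x)\le\var_{\alpha_V}(x)$ and, like you, simply cites from the GHS literature rather than reproving. The differences are in packaging rather than substance. For the lower bound, the paper bypasses $\hat\cF_T$ and works directly with the renormalised potential $V_T(\varphi)=|\varphi|^2/(2T)-\psi_V(\varphi/T)$: the same variance bound gives $\partial_\varphi^2 V_T(\varphi)\ge (T-T_c)/T^2$, and Bakry--\'Emery for $e^{-NV_T}$ then feeds into Proposition~\ref{prop_LSI_quadratic_W}. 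This is cleaner than your route through Theorem~\ref{thm: quadratic interaction}, since the PL-to-LSI step for $\nu^r_T$ via~\cite{CheStr24PL} carries a $(1+o_N(1))$ factor and would only deliver the lower bound for $N$ large, whereas the direct Bakry--\'Emery argument gives it for every $N\ge1$ as the statement demands. (Your strong convexity of $\hat\cF_T$ does transfer to strong convexity of $V_T$ via the infimal-convolution formula of Lemma~\ref{lemma_link_V_T_cF_T}, so the fix is immediate.) For the upper bound, the paper tests with the same linear statistic $N^{-1/2}\sum_i x_i$ but estimates the susceptibility from below using the \emph{exact} decomposition $m^N_T=\nu^r_T\,\mu^\varphi_T$ and a one-dimensional Laplace estimate on $\nu^r_T$, rather than approximating the marginal of $\bar X_N$ by $e^{-N\hat\cF_T}$; this avoids the Cram\'er/local-CLT control you would otherwise have to supply.
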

 Note that a similar result would hold for the interaction $W(x,y) = \frac12 (x -y)^2$
as one can recover the structure \eqref{eq: potentiel quadratique} by rewriting the Hamiltonian and changing $V$ as:
\begin{equation}
\frac{1}{4TN} \sum_{i,j=1}^N (x_i - x_j)^2 + \sum_{i=1}^N V(x_i) 
=  -\frac{1}{TN} \sum_{i,j=1}^N x_i x_j  + \sum_{i=1}^N \left( V(x_i) + \frac{x_i^2}{T}  \right).
\end{equation}

\subsection{Quadratic interactions on non-complete graphs}
\label{sec: quadratic diluted}

For many applications, it is natural to consider interactions on general graphs with large degrees but which are not fully connected. In this case, the mean-field theory does not apply and a specific analysis is needed (see e.g.~\cite{zbMATH07692278,zbMATH07768340,ayi-hal-04394768,zbMATH07873666}).
In this section, we consider interactions indexed by the edges of random graphs and extend to this case the  method implemented to prove Theorem~\ref{thm: quadratic interaction}.

\medskip

We first introduce some notation.
Consider a graph $G_N$ on $\{1, \dots , N\}$ with adjacency matrix:
\begin{equation}
A_{ij}
=
{\bf 1}_{i\sim j}
,\quad 
A_{ii}
=
0
,\qquad
i,j\in G_N
,
\end{equation}
where $i\sim j$ means that there is an edge between $i,j$ in $G_N$. 
We consider the following probability measure on $\R^{N}$ with interactions restricted to the edges of the graph $G_N$: 
\begin{equation}
m^{G_N}_T(dx)
=
\frac{1}{Z^{G_N}_T}\exp \Big[\frac{1}{2Td_N}(x,Ax)\Big]
\prod_{i=1}^N \alpha_V(dx_i),
\label{eq: measure_graph}
\end{equation}
where $d_N$ is the average degree of the graph, 
see Theorem~\ref{thm: quadratic interaction random} below for a precise definition of $d_N$.
Write 
$\gamma^{G_N}_{\mathrm{LS}}(T)$ for the   log-Sobolev constant~\eqref{eq: LSI definition}
for the measure~\eqref{eq: measure_graph}.
\begin{theorem}
\label{thm: quadratic interaction random}
Let $V(x)=\frac{x^4}{4}-\lambda \frac{x^2}{2}$ ($\lambda\in\R$) or more generally suppose that $V$ satisfies Assumption~\ref{ass_doublewell}. 
Let $T_c$ denote the critical temperature \eqref{eq_def_beta_c} of the fully connected mean-field-model $m^{N}_T$ with $W (x,y) = - x y$. 
Let $\bbP_N$ be the uniform measure on random regular graphs $G_N$ in $\{1, \dots , N\}$ with fixed degree $d_N$ at each site or the measure of Erd\"os-R\'enyi graphs with mean degree $d_N$.

Assume either that $\lim_{N\to\infty} d_N=\infty$ in the random regular graph case, 
or that $\lim_{N\to\infty} d_N/\log N= \infty$ in the Erd\"os-R\'enyi case.
Then:
\begin{itemize}
	\item[(i)]
For $T>T_c$, there is a constant $\gamma_T>0$ such that
\begin{align}
\label{eq: convergence borne LSI}
\lim_{N \to \infty} \bbP_N \big[  \gamma^{G_N}_{\mathrm{LS}}(T) \geq  \gamma_T \big] = 1.
\end{align}
	\item[(ii)] For $T<T_c$, there is a sequence $(\delta_N)$ converging to 0 such that: 
\begin{align}
\label{eq: convergence borne LSI 2}
\lim_{N \to \infty} \bbP_N \big[  \gamma^{G_N}_{\mathrm{LS}}(T) \leq  \delta_N \big] = 1.
\end{align}
\end{itemize}
\end{theorem}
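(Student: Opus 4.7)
The plan is to view $m^{G_N}_T$ as a small perturbation of the fully-connected Curie--Weiss measure and reuse the single-mode renormalisation of Theorem~\ref{thm: quadratic interaction}. The key spectral input, coming from Friedman's theorem in the random-regular case and from matrix concentration bounds in the Erd\"os--R\'enyi case, is that under the stated growth of $d_N$, with probability tending to one the normalised adjacency matrix decomposes as $K_N := A/d_N = P + R_N$, where $P = \tfrac{1}{N}\mathbf{1}\mathbf{1}^{\top}$ is the mean-mode projector and $\|R_N\|_{\mathrm{op}} = o(1)$. Hence there is still a single slow macroscopic mode, the empirical mean $\bar x = \tfrac{1}{N}\sum_i x_i$, and the interaction acts on the $(N-1)$-dimensional orthogonal complement only through the vanishing perturbation $R_N$.

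For part~(i), I would integrate out the orthogonal modes first. By Assumption~\ref{ass_Vnonquad} the confinement is uniformly strictly convex up to a bounded or Lipschitz defect, and since $\|R_N\|_{\mathrm{op}} = o(1)$, Bakry--\'Emery together with a Holley--Stroock perturbation gives a log-Sobolev constant for the conditional measure on the orthogonal modes (given $\bar x$) that is bounded below uniformly in $N$ and in typical $G_N$. The marginal of $\bar x$ is, up to a factor $1+o(1)$, the same coarse-grained measure $\propto e^{-N\hat\cF_T(m)}$ as in Theorem~\ref{thm: quadratic interaction}, and its log-Sobolev inequality follows from Corollary~\ref{cor: double well}, since $T > T_c$ supplies the Polyak--\L ojasiewicz inequality~\eqref{eq: Polyak-Lojasiewicz hat F} under the GHS assumption on $V$. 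A two-scale log-Sobolev inequality in the spirit of~\cite{MR3926125,10.1214/24-PS27} then combines these bounds into the claimed uniform lower bound on $\gamma^{G_N}_{\mathrm{LS}}(T)$.

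The hard part will be showing that the quadratic form $(x, R_N x)/(2T)$, whose coefficient is small but which is extensive in $N$, does not spoil the GHS structure of the effective potential for $\bar x$. This requires careful tracking of how the $R_N$-contribution renormalises when the orthogonal modes are integrated out: each eigenmode of $R_N$ orthogonal to $\mathbf{1}$ contributes a correction of order $\|R_N\|_{\mathrm{op}}$ to the effective free energy, and one must show that these sum to only an $o(1)$ perturbation of $\hat\cF_T$. The thresholds $d_N \to \infty$ (regular) and $d_N/\log N \to \infty$ (Erd\"os--R\'enyi) are precisely what is needed to make this quantitative on typical graphs.

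For part~(ii), I would argue by standard phase-coexistence. On typical $G_N$, the same spectral decomposition shows that the marginal of $\bar x$ under $m^{G_N}_T$ satisfies a large-deviation principle with rate function $\hat\cF_T - \inf \hat\cF_T$. For $T < T_c$ this rate function has two distinct global minimisers $m_\pm$. Taking $F$ to be a smooth indicator of a halfspace separating $m_+$ from $m_-$ makes the entropy in~\eqref{eq: LSI definition} of order one while the Dirichlet form decays exponentially in $N$, yielding $\gamma^{G_N}_{\mathrm{LS}}(T) \leq \delta_N \to 0$ on the event of typical graphs.
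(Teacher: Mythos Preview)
Your overall plan---isolate the constant mode via the spectral splitting $A/d_N=P+R_N$ with $\|R_N\|_{\rm op}=o(1)$, and treat $R_N$ as a perturbation---matches the paper, and your part~(ii) is essentially the paper's argument (the paper writes $m^{G_N}_T\propto e^{(x,Bx)/(2Td_N)}m^N_T$ and uses exponential moment bounds rather than invoking an LDP, but the test function and conclusion are the same).

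For part~(i), however, the paper does \emph{not} condition on $\bar x$. Instead it applies a Hubbard--Stratonovich transform to the $P$-part of the interaction, introducing a one-dimensional auxiliary field $\varphi\in\R$ and obtaining the decomposition $m^{G_N}_T=\nu^B_{r,T}\,\mu^{B,\varphi}_T$ with an \emph{explicit} renormalised potential $V^B_T(\varphi)$. This buys two things your outline does not. First, the fluctuation measure $\mu^{B,\varphi}_T$ lives on all of $\R^N$ (no hyperplane), and its LSI is obtained by a \emph{second} Hubbard--Stratonovich on the shifted matrix $C=B/d_N+2\epsilon_N\id\in[\epsilon_N,3\epsilon_N]$, which reduces to a product measure in a weak random field; no Holley--Stroock on a constrained ensemble is needed. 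Second, and more importantly, the Hessian $\partial_\varphi^2 V^B_T$ is an explicit variance that can be compared \emph{pointwise} to the mean-field $\partial_\varphi^2 V^0_T$ with error $O(\epsilon_N)$ (this is Proposition~\ref{prop_comparison_renorm_pot_graph}), so Bakry--\'Emery applies directly. In your approach the analogous step is the assertion that the $\bar x$-marginal is ``up to $1+o(1)$'' the measure $\propto e^{-N\hat\cF_T}$: this is correct at the large-deviations level, but an LSI for the marginal requires control of the \emph{actual} finite-$N$ potential (or its convexity/PL constant), not just its limit; extracting this from a Laplace integral over the conditioning hyperplane is exactly the work you would have to do, and it is not clear that the $R_N$-corrections to the effective potential are $o(1)$ \emph{uniformly in $m$} without an argument of the same type as the paper's. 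The two-scale step is also cleaner with an auxiliary field: $\partial_\varphi\E_{\mu^{B,\varphi}_T}[F^2]$ is an explicit covariance because $\varphi$ enters linearly in the exponent, whereas $\partial_m\E[F^2\mid\bar x=m]$ does not have such a formula.
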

\begin{remark}
\begin{itemize}
	\item[(i)] The proof of Theorem~\ref{thm: quadratic interaction random} provides a bound of the log-Sobolev constant at fixed $N$, 
	on any graph which is a sufficiently good expander. 
This is a condition that only involves the spectrum of the adjacency matrix (see Assumption~\ref{ass_graph} and Remark~\ref{rmk_more_graphs}).  
The precise distribution of the random graphs is therefore not relevant. 
\item[(ii)] A natural extension of Theorem \ref{thm: quadratic interaction random} would be to analyse the critical behavior on random graphs with large, but finite degrees. This question was addressed in \cite{MR3059200} for discrete models, and  adapted to our framework in \cite[Example 6.19]{10.1214/24-PS27}. 
\item[(iii)] As for Theorem~\ref{thm: quadratic interaction}, 
	a uniform log-Sobolev inequality for $m^{G_N}_T$ holds for more general $V$ satisfying only Assumption~\ref{ass_Vnonquad}. 
	The log-Sobolev inequality is then valid up to a temperature that may be higher than $T_c$. 
\end{itemize}
\end{remark}

\subsection{General interactions}
\label{sec: general interactions}

For general interactions $W$, the validity of the  log-Sobolev inequality is less understood. 
Let $W$ be an interaction with bounded second derivatives.
It was recently shown in \cite{wang2024LSI,chewi2024LSI} that if
$W$ is flat convex, i.e., for each $\rho_1,\rho_2\in {\bf M}_1(\R^d)$, 
\begin{equation}
\label{eq: def flat convex}
u \in[0,1]\mapsto \int W^+(x,y)\rho_u(dx) \; \rho_u(dy) \quad\text{is convex}
\qquad 
\big(\rho_u := (1-u) \rho_1+ u \rho_2 \big) , 
\end{equation}
then a uniform log-Sobolev inequality holds for every $T > 0$. In this case, $\mathcal F_T$ is   convex for any $T$ and there is no phase transition so that the critical value  defined in \eqref{eq_def_beta_c} is such that $T_c= 0$. Note that more general convex interactions than the two body potential $W(x,y)$ are covered by \cite{wang2024LSI,chewi2024LSI}.
In \cite{monmarche2024LSI}, the results \cite{wang2024LSI} were extended beyond the flat convex case.

\medskip

 Suppose that the  confinement potential $V$ satisfies Assumption~\ref{ass_Vnonquad}.
We consider  a  class of interaction potentials $W$ with a non-convex part.
\begin{assumption}[Assumptions on $W$]
\label{ass_Wnonquad}
$W\in C^2(\R^{d}\times\R^d,\R)$ is symmetric and can be decomposed as:
\begin{equation}
W
=
W^+ - W^- ,\qquad
W^\pm\in C^2(\R^d\times\R^d,\R)
,
\end{equation}
where $W^\pm$ are symmetric, 
and:
\begin{itemize}
\item $W^+$ is bounded and flat convex (see \eqref{eq: def flat convex}) and $\He W^+$ has a uniformly bounded operator norm.
\item $W^-$ is given by the sum of a quadratic function, 
and a function which admits a bounded, Lipschitz mode decomposition in the following sense. 
	There is a sequence of functions $n_k:\R^d \to \R [-1,1]$ 
and coefficients $\alpha \geq 0$, $w^-_k \geq 0$ ($k\in\N$) such that:
\begin{equation}
\label{eq: W- decomposition 0}
W^-(x,y) 
:=
\alpha \,  (x ,  y) + \sum_{k\geq 0}w^-_k n_k(x)n_k(y):=
\sum_{k\geq -d}w^-_k n_k(x)n_k(y),
\end{equation}
	where we set $w^-_{-i}=\alpha$ and $n_{-i}(x)=x^{(i)}$ for $i\in\{1,...,d\}$, 
	and:
	\begin{equation}
\sup_{x,y\in\R^d}\big|W^- (x,y)-\alpha\, (x,y)\big|<\infty,
\qquad 
\sum_{k\geq 0}w^-_k 
\sup_{x\in\R^d}|\nabla n_k(x)|^2 < \infty
.
\label{eq: upper bound W-}
	\end{equation}
\end{itemize}
\end{assumption}
\renewcommand{\cM}{{\bf m}}
The functions $W^\pm$ do not play the same role:  $W^+$ cannot induce a phase transition contrary to the  interaction $W^-$ which may do so depending on $\alpha, w^-_k,n_k$. 
In order to determine a threshold for the validity of the log-Sobolev inequality, we are going to define the restriction of the mean-field free energy  $\cF_T$ \eqref{eq_def_free_energy}  to the modes $n_k$.  
Given $\cM = (m_k)_{k \geq -d}$, we consider the subset of probability densities with prescribed modes
\begin{equation}
  \label{eq: modes}
  {\bf P} (\cM) = \Big \{ \rho  \in \mathbf{M}_1(\R^d); \quad m_{k} =  \int_{\R^d} n_k(x) \rho(dx)  \Big \}
 \end{equation}
and define the \emph{coarse grained free energy} as 
\begin{equation}
\hat  \cF_T(\cM) 
= \inf \Big\{ \cF_T(\rho), \qquad \rho \in {\bf P} (\cM) \Big\}, 
\label{eq: projection F_T}
\end{equation}
with the convention $\cF_T (\cM) = + \infty$
if ${\bf P} (\cM) = \emptyset$.
This is a multi-mode  generalisation of \eqref{eq: coarse grained m}.

The functional $\hat  \cF_T$ is \emph{strongly convex} if there is $\delta >0$ such that for any $\cM^1  = (m^1_k)_{k \ge -d}$, 
$\cM^2 = (m^2_k)_{k \ge -d}$ and $t\in[0,1]$ then
\begin{equation}
t \hat  \cF_T(\cM^1) +  (1-t) \hat  \cF_T(\cM^2)
\geq  \hat  \cF_T \big( \alpha \cM^1 +  (1-\alpha) \cM^2 \big) 
+ \frac{\delta}{2} t(1-t)  \sum_{k \ge -d} w_k^- \big( m^1_k - m^2_k \big)^2
.
\label{eq: projection F_T convexe}
\end{equation}
For a smooth functional $\hat  \cF_T$, the previous condition 
 is equivalent
to assuming that the Hessian is bounded from below by   a diagonal matrix with coefficients   $(\delta \, w_k^-)_{k \geq  -d}$.   
We also say that $\hat  \cF_T$ is \emph{$\delta$-convex} if~\eqref{eq: projection F_T convexe} holds with a specific $\delta>0$.

\begin{theorem}
\label{thm: nonquadratic mean-field}
Let $V$ be a confinement potential
and $W$ an interaction respectively satisfying  Assumptions~\ref{ass_Vnonquad} and \ref{ass_Wnonquad}.  
If $\hat  \cF_T$ is  strongly convex, then  the mean-field measure
$m^N_T$ satisfies a log-Sobolev inequality with a constant independent of $N$. 
\end{theorem}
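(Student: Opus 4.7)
The strategy is the mode-decomposition counterpart of the single-mode argument behind Theorem~\ref{thm: quadratic interaction}, and it splits the interaction into the two pieces $W=W^+-W^-$ singled out by Assumption~\ref{ass_Wnonquad}. The part $W^+$ is flat convex and bounded with bounded Hessian, so once it remains the only interaction the results of \cite{wang2024LSI,chewi2024LSI} furnish a uniform LSI on their own. The non-convex part $W^-$ will be linearised mode by mode via a Hubbard--Stratonovich identity: because each $w_k^-\geq 0$, for every $k\geq -d$ one has
\[
\exp\Big(\frac{w_k^-}{2TN}\Big(\sum_{i=1}^N n_k(x_i)\Big)^2\Big)\propto \int_{\R}\exp\Big(\sqrt{w_k^-/T}\,m_k \sum_{i=1}^N n_k(x_i)-\tfrac{N}{2} m_k^2\Big)\,dm_k.
\]
Applying this to all modes lifts $m^N_T$ to a joint measure $\widetilde m^N_T(dx\,dm)$ whose $x$-marginal is $m^N_T$. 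The conditional law $\mu_m$ of $x$ given $m$ is mean field with interaction $W^+$ alone and site weights $\alpha_V\cdot e^{U_m}$, where $U_m(x)=\sum_{k\geq -d}\sqrt{w_k^-/T}\,m_k\,n_k(x)$, while the marginal law of $m$ has the form $\nu_N(dm)\propto e^{-N\widehat F_{T,N}(m)}\,dm$ for a computable effective potential $\widehat F_{T,N}$.

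I would then combine LSIs for $\mu_m$ and $\nu_N$ via the standard two-scale entropy inequality
\[
\ent_{\widetilde m^N_T}(F)\leq\int\ent_{\mu_m}(F)\,\nu_N(dm)+\ent_{\nu_N}\big(\E_{\mu_m}[F]\big),
\]
and then project down to $m^N_T$. For the conditional factor, the effective site potential $V-U_m$ still lies in the class of Assumption~\ref{ass_Vnonquad} (the uniformly convex piece $V_c$ is unchanged, and $U_m$ has Lipschitz norm controlled, via Cauchy--Schwarz and \eqref{eq: upper bound W-}, by $|m|$ times a finite constant); together with flat convexity of $W^+$, \cite{wang2024LSI,chewi2024LSI} delivers a LSI for $\mu_m$ uniform in $N$ and $m$. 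For the marginal factor, a Laplace/saddle-point analysis identifies $\widehat F_{T,N}$, up to an $O(1/N)$ correction and affine terms, with a Legendre transform in the variables $(\sqrt{w_k^-/T}\,m_k)$ of the coarse-grained free energy of the $W^+$-only model under mode constraints; combined with the explicit $-\tfrac{1}{2}\sum_k m_k^2$ piece inherited from the Hubbard--Stratonovich exponent, this reconstructs a convex conjugate of $\hat\cF_T$ as defined in \eqref{eq: projection F_T}. Strong convexity \eqref{eq: projection F_T convexe} of $\hat\cF_T$ therefore translates into uniform strong convexity of $\widehat F_{T,N}$ with Hessian bounded below, on the relevant scale, by the diagonal matrix with entries $\delta w_k^-$, and Bakry--\'Emery closes the marginal estimate. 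The cross-term in the two-scale inequality is controlled through commutation identities for $\nabla_m\E_{\mu_m}[F]$ and covariance bounds for $\mu_m$ inherited from its conditional LSI, a computation that is standard in the two-scale entropy method.

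The main obstacle I anticipate is the quantitative transfer of strong convexity from $\hat\cF_T$ to the finite-$N$ effective potential $\widehat F_{T,N}$, uniformly in $N$ and in the possibly infinite index set $k\geq -d$. Two quantitative points must be settled. First, the Laplace identification of $\widehat F_{T,N}$ must come with a covariance-type finite-$N$ correction of order $1/N$ that does not erode the $\delta$-convexity constant; this will feed back on the conditional LSI for $\mu_m$ used to get Gaussian-type tail bounds. Second, the sum over modes must be handled with care---here the summability condition \eqref{eq: upper bound W-} is precisely what enables either a truncation to finitely many ``relevant'' modes with the tail treated as a bounded-Lipschitz perturbation \`a la Holley--Stroock, or a direct summable estimate of the tail's contribution to convexity. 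Once these two points are pinned down, the argument assembles as the mode-decomposition analogue of the scale-decomposition (Polchinski-flow) proofs used for short-range spin systems in \cite{10.1214/24-PS27}.
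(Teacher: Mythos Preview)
Your overall architecture---Hubbard--Stratonovich on the modes of $W^-$, then a two-scale entropy decomposition into a fluctuation measure $\mu_m$ with only the flat convex interaction $W^+$ and a renormalised measure $\nu_N$ on the mode variables---is exactly what the paper does. The identification of the effective potential with (a transform of) $\hat\cF_T$ and the $O(1/N)$ correction are also handled essentially as you sketch.

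There is, however, a genuine gap in your treatment of the fluctuation measure. You assert that \cite{wang2024LSI,chewi2024LSI} give an LSI for $\mu_m$ \emph{uniform in $m$}. This is not correct, and you in fact contradict yourself: you note that $U_m$ has Lipschitz norm proportional to $|m|$, but Assumption~\ref{ass_Vnonquad} is only qualitative---the single-site LSI constant produced from ``uniformly convex plus Lipschitz'' degrades with the Lipschitz constant of the perturbation. More precisely, split $m$ into the components $\varphi$ attached to the linear modes $n_{-i}(x)=x_i$ and the components $\zeta$ attached to the bounded modes $n_k:\R^d\to[-1,1]$, $k\geq 0$. The tilt by $\varphi$ is linear in $x$ and leaves the Hessian untouched, so the single-site LSI constant is genuinely uniform in $\varphi$. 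The tilt by $\zeta$, however, is a bounded perturbation of size $O(\|\zeta\|_{\bbH_0})$, and Holley--Stroock loses a factor $e^{c\|\zeta\|_{\bbH_0}/T}$. Wang's theorem then gives $(\gamma^{N,\psi}_T)^{-1}\leq c\,e^{c\|\zeta\|_{\bbH_0}/T}$, valid only for $N$ larger than a threshold that itself grows like $e^{c\|\zeta\|_{\bbH_0}/T}$; for smaller $N$ one needs a separate crude bound. The covariance estimate you invoke for the cross term (Lemma~\ref{lemm_BH}) also carries this $\zeta$-dependent LSI constant, so the right-hand side of the two-scale inequality is not directly $\E_{m^N_T}[|\nabla F|^2]$ but rather $\E_{\nu^r_T}\big[(\gamma^{N,\psi}_T)^{-2}\E_{\mu^{N,\psi}_T}[|\nabla F|^2]\big]$.

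The paper closes this by an additional argument that you are missing entirely: one splits the $\psi$-expectation according to $\{\|\zeta\|_{\bbH_0}\leq A\}$ versus $\{\|\zeta\|_{\bbH_0}>A\}$, uses the good LSI bound on the first set, and on the second set exploits a Gaussian identity (swapping the order of integration to write $\nu^r_T\mu^{N,\psi}_T=m^N_T\,\gamma^{\bbH,x}_{T/N}$) together with Gaussian concentration of $\zeta$ around its mean $\frac1N\sum_i n_\cdot(x_i)\in[-1,1]^{\N}$ to kill the exponentially bad weight. Your two anticipated ``quantitative points'' do not cover this: the issue is not summability over modes but the lack of uniformity in the field magnitude, and it persists already with finitely many modes.
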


Theorem~\ref{thm: nonquadratic mean-field} relies on~\cite{wang2024LSI} to deal with the interaction term $W^+$ and uses a decomposition similar to the one introduced
in the proof of Theorem~\ref{thm: quadratic interaction} to handle the quadratic potential.

\begin{remark}
The quadratic interaction  considered in Theorem \ref{thm: quadratic interaction} falls into the class of the interaction potential \eqref{eq: W- decomposition 0} (by choosing $w_k^- =0$ for $k \geq 0$). 
In this case, 
 there are confinement potentials for which
the strong convexity of $\hat  \cF_T$ is a sharp condition as seen in Corollary~\ref{cor: double well}.
\end{remark}

The representation \eqref{eq: W- decomposition} is motivated by the Fourier decomposition.
In particular, in the periodic domain $[0,2 \pi)^d$, any smooth symmetric interaction potential of the form 
$W(x,y) = w(x-y)$ can be decomposed as \eqref{eq: mode decomposition 1}  (with  coefficient $\alpha =0$): for $x,y \in [0,2 \pi)^d$,
\begin{align} 
w(x-y) &= \sum_{k\geq 0} \hat w_k \cos( (k, x-y) )\nnb
&=  \sum_{k\geq 0} \hat w_k \cos((k,x))\cos((k,y)) + \hat w_k \sin((k,x))\sin((k,y)).
\label{eq: mode decomposition 1}
\end{align}
The function $W$ can then be split into $W^+, W^-$ according to the sign of the Fourier coefficients. 
The Lipschitz assumption \eqref{eq: upper bound W-} on the $n_k$  is implied by sufficient smoothness of $w$.

As a consequence Theorem \ref{thm: nonquadratic mean-field}
 (or rather its proof which also applies on the torus)
implies the following result in the periodic case.
\begin{corollary}
\label{cor: periodic case}
Consider the mean-field measure on the periodic domain $[0,2 \pi]^d$ with   smooth periodic potentials $V (x)$ and $W(x,y) = w(x-y)$. 
If $\hat  \cF_T$ is  strongly convex, then  the mean-field measure
$m^N_T$ satisfies a log-Sobolev inequality with a constant independent of $N$.  
\end{corollary}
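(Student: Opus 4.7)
The plan is to apply (the proof of) Theorem~\ref{thm: nonquadratic mean-field} after casting the periodic problem into the mode-decomposed form of Assumption~\ref{ass_Wnonquad}. The Fourier expansion \eqref{eq: mode decomposition 1} of the smooth symmetric function $w$ on the torus supplies this for free once we split according to the sign of the Fourier coefficients. Explicitly, set
\begin{equation*}
W^+(x,y)=\sum_{k:\hat w_k>0}\hat w_k\bigl[\cos(k\cdot x)\cos(k\cdot y)+\sin(k\cdot x)\sin(k\cdot y)\bigr],
\end{equation*}
and define $W^-$ analogously with coefficients $|\hat w_k|$ over $\{k:\hat w_k<0\}$ and no additional quadratic term ($\alpha=0$ in \eqref{eq: W- decomposition 0}, as $(x,y)\mapsto(x,y)$ is not periodic). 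The modes $n_k\in\{\cos(k\cdot\,\cdot),\sin(k\cdot\,\cdot)\}$ take values in $[-1,1]$ and satisfy $|\nabla n_k|\leq|k|$; smoothness of $w$ forces rapid decay of $\hat w_k$, so $\sum_k|\hat w_k|(1+|k|^2)<\infty$, yielding both the boundedness and Lipschitz summability conditions~\eqref{eq: upper bound W-} and the boundedness of $W^+$ with uniformly bounded operator norm of its Hessian.

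Next I would verify that $W^+$ is flat convex. For $\rho_u=(1-u)\rho_1+u\rho_2$, each functional $\int n_k\,d\rho_u$ is affine in $u$, hence its square is convex, and $\int W^+\,d\rho_u^{\otimes2}$ is a non-negative linear combination of squares of affine functions of $u$, therefore convex. For the confinement potential on the compact torus, Assumption~\ref{ass_Vnonquad} is to be replaced by mere smoothness of $V$: this forces $e^{-V}$ to be bounded above and below, and the Holley-Stroock perturbation principle combined with the log-Sobolev inequality of the uniform measure on the torus gives that $\alpha_V$ satisfies a log-Sobolev inequality. Since Assumption~\ref{ass_Vnonquad} enters the proof of Theorem~\ref{thm: nonquadratic mean-field} only to produce this input, it can be safely dispensed with in the periodic setting.

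With these ingredients, the proof of Theorem~\ref{thm: nonquadratic mean-field} applies essentially verbatim: the flat convex component $W^+$ is absorbed using the log-Sobolev inequality of \cite{wang2024LSI,chewi2024LSI}, and the component $W^-$ is treated through its mode decomposition, with the strong convexity of $\hat{\cF}_T$ (taken with respect to the Fourier modes $m_k=\int n_k\,d\rho$) providing exactly the control needed to close the argument. The step I expect to be the principal obstacle is the handling of the infinite family of Fourier modes: the multi-mode scheme must absorb the tail $\sum_{|k|>K}|\hat w_k|$ together with its gradient-weighted counterpart $\sum_{|k|>K}|\hat w_k||k|^2$ uniformly in the slow modes retained by $\hat{\cF}_T$, and this is secured precisely by the super-polynomial decay of $\hat w_k$ inherited from smoothness of $w$. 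A minor bookkeeping point is that the natural index set for the modes is $\mathbb Z^d\setminus\{0\}$ rather than $\mathbb N$, but this is a purely notational relabelling of \eqref{eq: W- decomposition 0}.
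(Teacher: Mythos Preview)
Your proposal is correct and follows exactly the route sketched in the paper: the Fourier expansion~\eqref{eq: mode decomposition 1} supplies the mode decomposition, the split of $W$ into $W^\pm$ according to the sign of $\hat w_k$ yields Assumption~\ref{ass_Wnonquad} (with $\alpha=0$), smoothness of $w$ gives the required summability, and then the proof of Theorem~\ref{thm: nonquadratic mean-field} is invoked on the torus. Your verification that $W^+$ is flat convex and your remark that on the compact torus Assumption~\ref{ass_Vnonquad} can be replaced by Holley--Stroock are exactly the details the paper leaves implicit when it writes ``or rather its proof which also applies on the torus''; the infinite-mode issue you flag is handled in the paper by the truncation of Remark~\ref{Rem: truncation}, which your decay argument also covers.
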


\begin{remark}
In Appendix~\ref{append: XY}, we check that for the XY model, Corollary~\ref{cor: periodic case} implies the log-Sobolev inequality  all the way to the critical threshold $T_c=1/2$.
Note that this was already established in~\cite{MR3926125}. 
The mode decomposition did not appear there,
  but in this special situation, it is equivalent to the $\R^2$-valued external field that appeared instead.

Using spherical harmonics,
this can similarly be extended to rotation invariant interactions on $\bbS^{d}$, $d \geq 2$, i.e., $W(x,y) = W(x\cdot y)$.
In particular, for the mean-field $O(n)$ model, in which $x_i \in \bbS^{n-1}$,
the addition theorem for spherical harmonics \cite[Theorem~2.9]{MR2934227} implies that
\begin{equation}
  - W(x_i,x_j)
  = x_i\cdot x_j
  = \frac{|\bbS^{n-1}|}{N_{1,n}}\sum_{m} Y_1^m(x_i)\overline{Y_1^{m}(x_j)}
\end{equation}
where $(Y^m_1)_{1\leq m\leq N_{1,n}}$ is an orthonormal basis of the spherical harmonics of order $1$ in $n$ dimensions. 
%
This can be arranged into real form so that the right-hand side becomes $\sum_k n_k(x_i)n_k(x_j)$.
For $x_i\in \bbS^2$ this reduces to the trigonometric identity
\begin{align}
  x_i\cdot x_j
  &= \cos(\theta_i)\cos(\theta_j) + \cos(\varphi_i)\sin(\theta_i)\cos(\varphi_j)\sin(\theta_j)
    + \sin(\varphi_i)\sin(\theta_i)\sin(\varphi_j)\sin(\theta_j)
    \nnb
  &= n_1(x_i)n_1(x_j) + n_2(x_i)n_2(x_j)+n_3(x_i)n_3(x_j),
\end{align}
and the $n_k(x)$ are simply  the spherical coordinates of $x\in \bbS^2$.
In the same way as for the XY model, it was shown in~\cite{MR3926125} that the critical threshold $T_c=1/n$ for the $O(n)$ model  can be reached  using this decomposition, for any $n$.
\end{remark}

\subsection{Possible generalisations}

We conclude this section by mentioning a series of open problems to generalise Theorem \ref{thm: nonquadratic mean-field}. 

\begin{enumerate}
\item In the compact situation (torus or sphere) the mode decomposition into Fourier modes or spherical harmonics seems very natural.
  On the other hand, the assumption of bounded modes is less relevant on an unbounded space. 
  Can the proof be adapted to the case where $W^-$ only has bounded Hessian?
\item The strong convexity assumption on $\hat \cF_T$ applies to all modes simultanously.
  This is in the spirit of the Bakry--\'Emery criterion, but
  different from the scale decomposition in the Polchinski
  renormalisation group flow \cite{10.1214/24-PS27}, where the scales are effectively revealed one after another from the smallest to the largest scales.
  Is there a version of this renormalisation group strategy that would explore modes rather than scales in an ordered fashion?
\item 
The convexity criterion on $\hat  \cF_T$ has been introduced to provide a simple criterion in terms of  the mean-field free energy  $\cF_T$ \eqref{eq_def_free_energy}, but we do not expect this condition to be optimal in general. 

If the coarse grained free energy $\hat  \cF_T$ depends only on a finite number of modes and 
satisfies a Polyak-\L ojasiewicz inequality of the form \eqref{eq: Polyak-Lojasiewicz hat F}  then the same discussion as in Theorem \ref{thm: quadratic interaction} would 
imply the conclusion of Theorem~\ref{thm: nonquadratic mean-field},  i.e., the uniform log-Sobolev inequality.

More generally, it would be interesting to investigate if the log-Sobolev inequality for the particle system could be implied by an assumption on a uniform rate of exponential relaxation for the gradient flow associated with $\cF_T$ (in the sense of~\cite{MR2053570}). 
We refer to \cite[Conjecture 1]{MR4604897} for a  precise conjecture.
\end{enumerate}

\section{Quadratic interaction potential}
\label{sec: quadratic}

In this section, we prove Theorem~\ref{thm: quadratic interaction}  and Corollary~\ref{cor: double well} for quadratic interactions $W(x,y) = - (x,y)$. 
This example also illustrates the general strategy in the simplest instance of one mode.

We first prove a log-Sobolev inequality up to a certain convexity threshold on the temperature for  potentials $V$  
satisfying Assumption \ref{ass_Vnonquad} and $d$-dimensional spin variables $x_i \in \bbR^d$. The analysis is carried out in terms of an auxiliary functional, the renormalised potential, defined in \eqref{eq_def_V_t_quadratic_W}.
For a certain class of double well potentials $V$, we show in Section \ref{sec_crit_threshold} that this threshold coincides with the critical temperature $T_c$ of the free energy and that the log-Sobolev constant diverges like $(T-T_c)^{-1}$ as $T\downarrow T_c$.

\subsection{Renormalised potential and renormalised measure}
\label{sec_measure_decomp_quadratic}
Our starting point is the following elementary identity, valid for all $(x_1,\dots, x_N) \in (\R^d)^N$:
\begin{equation} \label{e:mftidentity}
  \exp\qBB{\frac{1}{2N T} \sum_{i,j=1}^N(x_i,x_j)}
  = 
  \text{constant}
  \int_{\R^d} \exp\qBB{-\frac{N|\varphi|^2}{2T} + \frac{1}{T}\Big(\varphi,\sum_{i=1}^Nx_i\Big)}\, d\varphi
  ,
\end{equation}
where the constant is $(N/(2\pi T))^{d/2}$ and is not relevant. 
The identity~\eqref{e:mftidentity} induces a decomposition of the mean-field measure $m^N_T$ \eqref{eq_measure_nonquad}. 
Indeed for any test function $F:(\R^d)^N\to\R$, one gets
\begin{align}
\E_{m^N_T}[F]
=
\frac{\text{constant}}{Z^N_T} \int_{(\R^d)^N}  
 \int_{\R^d}  F(x) \; \exp\qBB{-\frac{N|\varphi|^2}{2T} + \frac{1}{T}\Big(\varphi,\sum_{i=1}^Nx_i\Big)}\, d\varphi
\, \prod_{i=1}^N\alpha_V(dx_i).
 \label{eq_meas_decomp_quadratic 0}
\end{align}
This decouples the interaction between the spins, so that the mean-field measure can be rewritten, after exchanging the order of integration, as 
\begin{equation}
\E_{m^N_T}[F]
=
\E_{\nu^r_T} \big[ \E_{\mu^\varphi_T}[F] \big],
\label{eq_meas_decomp_quadratic}
\end{equation}
with the \emph{renormalised measure} $\nu^r_T$ 
and the \emph{fluctuation measure} $\mu^{\varphi}_T$ ($\varphi\in\R^d$) given by:
\begin{equation}
\nu^r_T(d\varphi) \propto e^{ - N V_T(\varphi)}\, d\varphi \in {\bf M}_1(\R^d)
,
\qquad
\mu^{\varphi}_T(dx) \propto \prod_{i=1}^N e^{\frac{1}{T} (\varphi,x_i)
    }  \alpha_V (dx_i)
\in {\bf M}_1\big((\R^d)^N\big).
\label{eq_renorm_measure_fluct_measure}
\end{equation}
The measure $\alpha_{V}(dx)\propto e^{-V(x)}\, dx$ is the one defined in \eqref{eq: def alpha V}. 
The renormalised potential $V_T$ is defined for $T> 0$ and $\varphi\in\R^d$ by:
\begin{equation}
  V_T(\varphi) =   \frac{|\varphi|^2}{2T}
  -\log\int_{\R^d} 
  e^{\frac{(x,\varphi)}{T}} \, \alpha_{V}(dx)  .
 \label{eq_def_V_t_quadratic_W}
\end{equation}
Note that the normalisation factor $Z^N_T$ in \eqref{eq_meas_decomp_quadratic 0} cancels with the normalisation factors of the probability measures $\nu^r_T, \mu^\varphi_T$.

The measure decomposition~\eqref{eq_meas_decomp_quadratic} says that the mean-field quadratic interaction can be realised as $N$ independent copies of the measure $\alpha_V$ coupled with an external field $\varphi$  distributed according to the probability measure $\nu^r_T$. 
In the next section, we use this decomposition to prove a uniform log-Sobolev inequality for $m^N_T$ provided
the measure $\exp( - N V_T)$ satisfies a suitable log-Sobolev inequality.

\subsection{Log-Sobolev inequality in the high temperature phase}
\label{sec_LSI_HT_quadratic}
 We are going to show that the mean-field measure $m^N_T$ satisfies a log-Sobolev inequality with constant bounded uniformly in $N$ for any temperature at which the 
renormalised measure $\nu^r_T$ satisfies a log-Sobolev inequality with constant $N \lambda_T$ for some constant $\lambda_T$ independent of $N$.
Throughout the section, $\gamma_V>0$ is such that $\alpha^h_V(dx)\propto e^{(h,x)-V(x)}\, dx$ satisfies a log-Sobolev inequality with constant $\gamma_V$ uniform in $h\in\R^d$. 
Such a $\gamma_V$ exists if the  confinement potential $V$ satisfies Assumption~\ref{ass_Vnonquad}.

\begin{proposition}
\label{prop_LSI_quadratic_W}
Let $T>0$ be such that
 $\nu^r_T(d\varphi)\propto e^{-NV_T(\varphi)}\, d\varphi$ satisfies a log-Sobolev inequality with constant $N\lambda_T$
for some $\lambda_T>0$. 
Then $m^N_T$ satisfies a log-Sobolev inequality with constant:
\begin{equation}
\frac{1}{\gamma^{N}_{\rm LS}(T)}
\leq 
\frac{1}{\gamma_V} + \frac{ 1}{\gamma_V^2T^2\, \lambda_T}
.
\end{equation}
\end{proposition}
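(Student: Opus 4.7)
I plan to combine the mixture decomposition \eqref{eq_meas_decomp_quadratic}, $m^N_T(dx)=\int_{\R^d}\mu^\varphi_T(dx)\,\nu^r_T(d\varphi)$, with the entropy chain rule
\[
  \ent_{m^N_T}(F)=\E_{\nu^r_T}\!\bigl[\ent_{\mu^\varphi_T}(F)\bigr]+\ent_{\nu^r_T}\bigl(G\bigr), \qquad G(\varphi):=\E_{\mu^\varphi_T}[F].
\]
The scheme is to bound each summand by a constant multiple of $\int|\nabla_x\sqrt F|^2\,dm^N_T$, then sum and divide by $2$ to read off the LSI constant.

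For the inner entropy, $\mu^\varphi_T$ is the $N$-fold product of the tilted measure $\alpha^{\varphi/T}_V(dx)\propto e^{(\varphi,x)/T}\alpha_V(dx)$. Since the tilt is affine in $x$, the Hessian of $-\log\alpha^{\varphi/T}_V$ equals $\He V$, so by Assumption~\ref{ass_Vnonquad} each $\alpha^{\varphi/T}_V$ satisfies a log-Sobolev inequality with constant $\gamma_V$ uniformly in $\varphi\in\R^d$. Tensorisation gives the same constant for $\mu^\varphi_T$, and integrating against $\nu^r_T$ produces the first contribution $\tfrac{2}{\gamma_V}\int|\nabla_x\sqrt F|^2\,dm^N_T$.

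For the outer entropy, the hypothesis supplies $\ent_{\nu^r_T}(G)\leq\tfrac{2}{N\lambda_T}\int|\nabla_\varphi\sqrt G|^2\,d\nu^r_T$, so the task reduces to estimating $|\nabla_\varphi\sqrt G|^2$ in terms of $x$-gradients of $\sqrt F$. Differentiating $G(\varphi)=\E_{\mu^\varphi_T}[F]$ under the integral sign yields
\[
  \nabla_\varphi\log G(\varphi) \;=\; \tfrac{1}{T}\bigl(\E_{\rho_\varphi}[S]-\E_{\mu^\varphi_T}[S]\bigr),\qquad S:=\sum_{i=1}^N x_i\in\R^d,\quad \rho_\varphi:=\tfrac{F}{G(\varphi)}\mu^\varphi_T.
\]
Viewing $S$ as an $\R^d$-valued map on $((\R^d)^N,|\cdot|_2)$, one checks $\|S\|_{\mathrm{Lip}}\leq\sqrt N$, so by Kantorovich duality $|\E_{\rho_\varphi}S-\E_{\mu^\varphi_T}S|\leq\sqrt N\,\mathcal W_2(\rho_\varphi,\mu^\varphi_T)$. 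The Talagrand $T_2$-transportation inequality (a consequence of the LSI of $\mu^\varphi_T$ via Otto--Villani) gives $\mathcal W_2^2(\rho_\varphi,\mu^\varphi_T)\leq\tfrac{2}{G(\varphi)\gamma_V}\ent_{\mu^\varphi_T}(F)$, whence
\[
  |\nabla_\varphi\sqrt G|^2 \;=\; \tfrac{G(\varphi)}{4}|\nabla_\varphi\log G|^2 \;\leq\; \tfrac{N}{2T^2\gamma_V}\ent_{\mu^\varphi_T}(F).
\]
Integrating against $\nu^r_T$ and applying the tensor LSI of $\mu^\varphi_T$ a second time to bound $\int\ent_{\mu^\varphi_T}(F)\,d\nu^r_T\leq\tfrac{2}{\gamma_V}\int|\nabla_x\sqrt F|^2\,dm^N_T$, the factor $N$ cancels with $\tfrac{1}{N\lambda_T}$ and yields the second contribution $\tfrac{2}{T^2\gamma_V^2\lambda_T}\int|\nabla_x\sqrt F|^2\,dm^N_T$. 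Summation and division by $2$ deliver the claimed inequality.

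The principal technical subtlety is the outer gradient estimate. A direct covariance bound $\partial_{\varphi_a}G=T^{-1}\mathrm{Cov}_{\mu^\varphi_T}(F,S_a)$ combined with Cauchy--Schwarz and the Poincar\'e inequality produces an estimate involving $\int F|\nabla_x\sqrt F|^2\,d\mu^\varphi_T/G(\varphi)$, a weighted $L^2$-norm not comparable to $\int|\nabla_x\sqrt F|^2\,d\mu^\varphi_T$. Passing through the $T_2$ inequality, and exploiting that the $\R^d$-valued displacement $S$ has Lipschitz constant $\sqrt N$, is what circumvents this issue and produces precisely the factor $N$ that cancels against the $1/N$ in the assumed log-Sobolev constant of $\nu^r_T$.
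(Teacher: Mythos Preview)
Your proof is correct and reaches the exact same constant as the paper. The overall architecture---entropy chain rule, tensorised LSI for the inner product measure $\mu^\varphi_T$, and the assumed LSI for the renormalised measure $\nu^r_T$---is identical to the paper's. The genuine difference lies in how you control the outer gradient $|\nabla_\varphi\sqrt{G}|$.

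The paper writes $\nabla_\varphi G=\tfrac{1}{2T}\,\cov_{\mu^\varphi_T}(F^2,S)/\E_{\mu^\varphi_T}[F^2]^{1/2}$ and invokes a refined covariance estimate (Lemma~\ref{lemm_BH}, of Bodineau--Helffer/Ledoux type, proved by semigroup interpolation) which gives directly
\[
\cov_{\mu^\varphi_T}\Big(F^2,\sum_i x_i\Big)^2\le \frac{4N}{\gamma_V^2}\,\E_{\mu^\varphi_T}[F^2]\,\E_{\mu^\varphi_T}[|\nabla F|^2].
\]
You instead rewrite $\nabla_\varphi\log G$ as a mean displacement $T^{-1}(\E_{\rho_\varphi}S-\E_{\mu^\varphi_T}S)$, bound it by $\sqrt{N}\,\mathcal W_2(\rho_\varphi,\mu^\varphi_T)$ via the Lipschitz constant of $S$, apply Otto--Villani to pass from $\mathcal W_2^2$ to relative entropy, and then re-use the LSI of $\mu^\varphi_T$ to close. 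Your route trades the bespoke covariance lemma for the (heavier but standard) transport machinery; both yield precisely $\tfrac{1}{\gamma_V}+\tfrac{1}{\gamma_V^2T^2\lambda_T}$. One small correction to your closing remark: a ``direct covariance'' approach \emph{does} succeed---that is exactly what the paper does---but it requires the entropic covariance bound of Lemma~\ref{lemm_BH} rather than the naive Cauchy--Schwarz/Poincar\'e chain you (rightly) dismiss.
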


The assumption of the proposition holds if $\He V_T\geq \lambda_T\id$ with $\lambda_T>0$
 by the Bakry--\'Emery criterion~\cite{MR889476}.
More generally, we will show in Proposition~\ref{prop: Polyak-Lojasiewicz} in the next subsection 
that it is implied by a Polyak-\L ojasiewicz inequality \eqref{eq: Polyak-Lojasiewicz hat F} for the coarse grained free energy $\hat\cF_T$.

\begin{proof}
  The measure $m^N_T$ has been split into two measures which are well behaved in the sense that they both satisfy a log-Sobolev inequality under the assumptions of Proposition \ref{prop_LSI_quadratic_W}.
  By assumption $\nu^r_T(d\varphi)$ satisfies a log-Sobolev inequality with constant $N\lambda_T>0$. 
  By Assumption~\ref{ass_Vnonquad}, the measure $\mu^{\varphi,i}_T(dx_i) \propto e^{\frac{1}{T} (\varphi,x_i)}\alpha_V(dx_i)$ satisfies a log-Sobolev inequality with constant $\gamma_V$ independent of $\varphi\in\R^d$ ($1\leq i \leq N$).
Thus the same is true for the product measure $\mu^{\varphi}_T$.

Let $G(\varphi) = \E_{\mu^{\varphi}_T}[F^2]^{1/2}$. Then the measure decomposition~\eqref{eq_meas_decomp_quadratic} implies the following standard entropy decomposition:
\begin{align}
\ent_{m^N_T}(F^2)
 = \E_{\nu^r_T}\big[\ent_{\mu^{\varphi}_T}(F^2)\big] + \ent_{\nu^r_T}(G(\varphi)^2).
\end{align}
As the measures $\nu^r_T, \mu^{\varphi}_T$ satisfy a log-Sobolev inequality, we deduce that 
\begin{align}
\ent_{m^N_T}(F^2)
  \leq
  \frac{2}{\gamma_V} \sum_{i=1}^N \E_{\nu^r_T}\E_{\mu^\varphi_T}\big[|\nabla_{x_i} F|^2\big]
  + \frac{2}{N\lambda_T} \E_{\nu^r_T}\big[|\nabla_{\varphi} G(\varphi) |^2\big]
  .
  \label{eq_first_bound_ent_quad}
\end{align}
By \eqref{eq_meas_decomp_quadratic}, the first term is precisely $(2/\gamma_V)\E_{m^N_T}[|\nabla F|^2]$. 
For the second term, 
notice:
\begin{equation}\label{e:nablaG}
  \nabla_{\varphi} G(\varphi)
  = \frac{\nabla_{\varphi} \E_{\mu^{\varphi}_T}[F^2]}{2\E_{\mu^{\varphi}_T}[F^2]^{1/2}}
  = \frac{1}{2T} \frac{\cov_{\mu^{\varphi}_T}\big(F^2,\sum_{i=1}^N x_i\big)}{\E_{\mu^{\varphi}_T}[F^2]^{1/2}}
  .
\end{equation}
The covariance is estimated by Lemma~\ref{lemm_BH} applied to $H(x)=\sum_i x_i$ which satisfies $|\nabla \sum_i x_i(a)|=\sqrt{N}$ for each $1\leq a\leq d$:
\begin{equation}
\cov_{\mu^{\varphi}_T} \Big(F^2,\sum_{i=1}^N x_i \Big)^2
\leq 
\frac{4N}{\gamma_V^2} \,  \E_{\mu^{\varphi}_T} \big[  F^2 \big]  \, 
\sum_{i=1}^N \E_{\mu^{\varphi}_T}\big[ \, \big|\nabla_{x_i} F\big|^2\, \big]
.
\end{equation}
Together with~\eqref{eq_first_bound_ent_quad} this completes the proof. 
\end{proof} 

\subsection{Renormalised potential and coarse grained free energy  - Proof of Theorem~\ref{thm: quadratic interaction}}

To prove Theorem~\ref{thm: quadratic interaction},
it suffices to show that
the assumption of Proposition~\ref{prop_LSI_quadratic_W} is implied by the
Polyak-\L ojasiewicz inequality \eqref{eq: Polyak-Lojasiewicz hat F} for $\hat\cF_T$. 
This is done in Proposition~\ref{prop: Polyak-Lojasiewicz} below.
We start with a general correspondence between the renormalised potential and the free energy that will be used extensively in
Section~\ref{sec; proof theorem 1.7} in a more general context.

\begin{lemma}
\label{lemma_link_V_T_cF_T}
Let $\varphi\in \R^d$. 
Then the renormalised potential introduced in \eqref{eq_def_V_t_quadratic_W} can be rewritten as 
\begin{align} \label{eq: projection F_T transform quadratic}
  V_T(\varphi) 
  &= \inf_{\rho\in {\bf M}_1( \R^d )} 
    \left\{ \cF_T(\rho) +\frac{1}{2T}\Big(\varphi -\int x\,\rho(dx) \Big)^2 \right \}
    \nnb
  &
  = \inf_{m\in\R^d} 
\left\{ \hat \cF_T(m) +\frac{1}{2T} | \varphi -m  |^2 \right \} 
,
\end{align}
and 
there are as many global minimisers for the free energy $\cF_T$ as for the renormalised potential $V_T$. 
\end{lemma}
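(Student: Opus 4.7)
The proof hinges on completing the square in the quadratic interaction. With $W(x,y)=-(x,y)$, the interaction part of $\cF_T(\rho)$ reduces to $-\tfrac{1}{2T}|m(\rho)|^2$ where $m(\rho):=\int x\,\rho(dx)$, and adding the coupling term $\tfrac{1}{2T}|\varphi-m(\rho)|^2$ cancels the quadratic dependence on $m(\rho)$, leaving only a linear tilt by $\varphi/T$:
\begin{equation}
\cF_T(\rho)+\frac{1}{2T}\bigl|\varphi-m(\rho)\bigr|^2
=\frac{|\varphi|^2}{2T}+\int\rho(x)\log\rho(x)\,dx+\int V(x)\,\rho(dx)-\frac{1}{T}\int(\varphi,x)\,\rho(dx).
\end{equation}

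For the first equality I would rewrite the right-hand side as $V_T(\varphi)+H(\rho\,|\,\eta_\varphi)$, up to an additive constant independent of $\rho$ and $\varphi$ coming from the normalisation of $\alpha_V$ (irrelevant to the variational problem). Here $\eta_\varphi(dx)\propto e^{(\varphi,x)/T}\alpha_V(dx)$ is the one-particle tilted measure from~\eqref{eq_renorm_measure_fluct_measure} and $H(\cdot\,|\,\cdot)$ is the relative entropy. Gibbs' variational principle then gives $H(\rho\,|\,\eta_\varphi)\geq 0$ with equality uniquely at $\rho=\eta_\varphi$, so the infimum over $\mathbf{M}_1(\R^d)$ equals $V_T(\varphi)$ as read off from~\eqref{eq_def_V_t_quadratic_W}.

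For the second equality I would reorder the infimum by first conditioning on $m(\rho)=m$ and then minimising over $m\in\R^d$. Because the coupling $\tfrac{1}{2T}|\varphi-m|^2$ depends on $\rho$ only through $m(\rho)$, it passes outside the inner infimum; the inner infimum of $\cF_T$ at fixed first moment $m$ is by definition $\hat\cF_T(m)$ (see~\eqref{eq: coarse grained m}), and the stated formula follows immediately.

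For the bijection of global minimisers I would use the variational identity as a sandwich. On the one hand, $V_T(\varphi)\geq\inf_\rho\cF_T(\rho)$ for every $\varphi$, so $\min V_T\geq\min\cF_T$. On the other, if $\rho^*$ minimises $\cF_T$ then testing the variational formula with $\varphi=m(\rho^*)$ and $\rho=\rho^*$ yields $V_T(m(\rho^*))\leq\cF_T(\rho^*)=\min\cF_T$; equality holds throughout and $m(\rho^*)$ minimises $V_T$. Conversely, a minimiser $\varphi^*$ of $V_T$ satisfies $\nabla V_T(\varphi^*)=(\varphi^*-m(\eta_{\varphi^*}))/T=0$, so $\eta_{\varphi^*}$ attains the infimum in the first equality with a vanishing quadratic penalty and therefore minimises $\cF_T$. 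The only mildly delicate point, and hence the main potential obstacle, is the identification of every global minimiser of $\cF_T$ with an $\eta_{m(\rho^*)}$: this is the Euler--Lagrange equation $\log\rho(x)+V(x)-\tfrac{1}{T}(m(\rho),x)\equiv\mathrm{const}$, whose derivation requires a mild integrability check justified by Assumption~\ref{ass_Vnonquad}. Combining the two directions yields the claimed bijection between global minimisers and completes the proof.
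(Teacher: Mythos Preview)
Your proof is correct and arrives at the same identity as the paper, but by a more direct route. The paper's argument first rewrites $V_T(\varphi)$ as an $N$-fold product integral against $\alpha_V^{\otimes N}$ and then invokes Sanov's theorem in the $N\to\infty$ limit to obtain the variational formula~\eqref{eq: 1d variational principle for VT}. You bypass this large-deviation detour entirely: completing the square and recognising the resulting expression as $V_T(\varphi)+H(\rho\,|\,\eta_\varphi)$ (up to the harmless constant $-\log Z_V$, which you track and the paper silently drops) is exactly the one-body Gibbs variational principle, which is of course what Sanov reduces to here. Your approach is therefore more elementary and slightly more precise about constants, at no cost.

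For the correspondence of minimisers, the paper argues at the level of the coarse-grained free energy: it observes from~\eqref{eq: identite VT F} that $V_T$ and $\hat\cF_T$ share the same global minima and then appeals to the fact that distinct minimisers $\rho_m$ of $\cF_T$ have distinct means. Your sandwich argument is instead phrased directly in terms of $\cF_T$ and $V_T$ and makes the bijection explicit via the Euler--Lagrange equation and the computation $\nabla V_T(\varphi)=(\varphi-m(\eta_\varphi))/T$. Both arguments are valid; yours is somewhat more self-contained, while the paper's has the advantage of highlighting the role of the reduced functional $\hat\cF_T$ that is used elsewhere in the analysis.
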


The formula \eqref{eq: projection F_T transform quadratic} is reminiscent of the   Hopf--Lax formula for Hamilon--Jacobi equations,
but $\hat\cF_T$ in the argument also depends on $T$.
We refer to  \cite[Appendix A]{10.1214/24-PS27} for a discussion on the renormalisation group flow and the Hamilton--Jacobi equation.

\begin{proof}[Proof of Lemma~\ref{lemma_link_V_T_cF_T}]
Recall the definition \eqref{eq_def_V_t_quadratic_W} of $V_T$: 
for each $N\geq 1$,
\begin{equation}
V_T(\varphi) 
= 
 \frac{|\varphi|^2}{2T}
  -\log\int_{\R} 
 e^{\frac{(x,\varphi)}{T}} \, \alpha_{V}(dx_1) 
=
\frac{|\varphi|^2}{2T}
  -\frac{1}{N}\log\int_{\R^N} 
\prod_{i=1}^N  e^{\frac{(x,\varphi)}{T}} \, \alpha_{V}(dx_i) 
.
\end{equation}
Taking the large $N$ limit, 
Sanov's theorem gives:
\begin{align}
 V_T(\varphi)  
&= 
\frac{|\varphi|^2}{2T} - \sup_{\rho\in{\bf M}_1(\R^d)} 
\left\{ \frac{1}{T}\Big(\varphi, \int x \,\rho(dx)\Big) - \int V(x)\, \rho(dx) - \int \rho(x) \log \rho(x)\, dx \right \} 
\nnb
& = \inf_{\rho\in{\bf M}_1(\R^d)} 
\left\{ \cF_T(\rho)
+
\frac{1}{2T}\Big|\varphi -\int x\,\rho(dx)\Big|^2 \right \}.
\label{eq: 1d variational principle for VT}
\end{align}
This formula is the counterpart of \eqref{eq: projection F_T transform} which will be established later on in a more general framework. 
The argument of the variational principle in the first line above is strictly convex in $\rho$. 
There is thus a unique critical point $\rho_{m_\varphi}$, 
parametrised by its magnetisation $m_\varphi=\int x\, \rho_{m_\varphi}(dx)$, and explicitly given by:
\begin{equation}
\rho_{m_\varphi}(dx)
\propto
e^{\frac{(x,\varphi)}{T} - V(x)}\, dx
.
\label{eq: minimum rho_m}
\end{equation}
In terms of the coarse grained free energy \eqref{eq: projection F_T},
the variational formula \eqref{eq: 1d variational principle for VT} can be rewritten as
\begin{align}
\label{eq: identite VT F}
  V_T(\varphi) 
  & = \inf_{m} 
\left\{ \hat \cF_T(m) +\frac{1}{2T}\Big|\varphi -m \Big|^2 \right \} 
\nnb
& = \inf_{m^*} \hat \cF_T(m^*) + 
 \inf_{m} \left\{ \big( \hat \cF_T(m) - \inf_{m^*} \hat \cF_T(m^*)\big) +\frac{1}{2T}\Big|\varphi -m \Big|^2 \right \}
 .
\end{align}
This implies  that the global minima of $V_T$ coincide exactly with the  global minima of $\hat \cF_T$.
As all $\rho_m$ have different mean, 
there are therefore as many global minimisers for the free energy $\cF_T$ as for the renormalised potential $V_T$.  
\end{proof}

\begin{proposition}
\label{prop: Polyak-Lojasiewicz}  
Let $T>T_c$ and assume that  $\hat\cF_T$ satisfies a Polyak-\L ojasiewicz inequality \eqref{eq: Polyak-Lojasiewicz hat F}.
Then the  renormalised measure $\nu^r_T$ satisfies a log-Sobolev with constant $N \lambda_T$ for some constant $\lambda_T >0$ independent of $N$.
\end{proposition}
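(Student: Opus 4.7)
The strategy is to transfer the Polyak-\L ojasiewicz inequality from $\hat\cF_T$ to the renormalised potential $V_T$ using the variational representation of Lemma~\ref{lemma_link_V_T_cF_T}, and then invoke the cited equivalence \cite[Theorem~1]{CheStr24PL} applied to $V_T$ (rather than to $\hat\cF_T$) to conclude a log-Sobolev inequality for $\nu^r_T \propto e^{-NV_T}$ with constant of order $N$.

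The key calculation runs as follows. Differentiating~\eqref{eq_def_V_t_quadratic_W} directly yields $T\nabla V_T(\varphi) = \varphi - m_\varphi$, where $m_\varphi$ is the mean of the tilted measure $\rho_{m_\varphi}$ from~\eqref{eq: minimum rho_m}. By the strict convexity in $\rho$ of the variational problem in the proof of Lemma~\ref{lemma_link_V_T_cF_T}, $m_\varphi$ is the unique minimiser in the inf-convolution formula~\eqref{eq: projection F_T transform quadratic} for $V_T(\varphi)$, so the first-order condition gives
\[
\nabla\hat\cF_T(m_\varphi) = \frac{1}{T}(\varphi - m_\varphi) = \nabla V_T(\varphi).
\]
Since $T>T_c$ and $\hat\cF_T$ satisfies a Polyak-\L ojasiewicz inequality, every critical point of $\hat\cF_T$ is a global minimiser, so there is a unique minimiser $m^\star$ and by Lemma~\ref{lemma_link_V_T_cF_T} the corresponding unique global minimiser of $V_T$ satisfies $\varphi^\star = m^\star$ and $V_T(\varphi^\star) = \hat\cF_T(m^\star)$. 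Writing
\[
V_T(\varphi) - V_T(\varphi^\star) = \bigl[\hat\cF_T(m_\varphi) - \hat\cF_T(m^\star)\bigr] + \frac{1}{2T}|\varphi - m_\varphi|^2,
\]
applying~\eqref{eq: Polyak-Lojasiewicz hat F} to the bracket and rewriting $|\varphi-m_\varphi|^2/(2T) = (T/2)|\nabla V_T(\varphi)|^2$ yields
\[
V_T(\varphi) - V_T(\varphi^\star) \leq \Bigl(\frac{1}{2\gamma_{\rm PL}} + \frac{T}{2}\Bigr)\,|\nabla V_T(\varphi)|^2,
\]
i.e.~a Polyak-\L ojasiewicz inequality for $V_T$ with constant $\lambda_T := \gamma_{\rm PL}/(1+T\gamma_{\rm PL})$ independent of $N$.

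Applying \cite[Theorem~1]{CheStr24PL} with $V_T$ in place of $\hat\cF_T$ then gives that $\nu^r_T$ has log-Sobolev constant $\lambda_T N(1+o_N(1))$, which after a harmless shrinking of $\lambda_T$ is $\geq \lambda_T N$ for all $N$ large. For $N$ in any bounded range, $V_T$ is smooth with at least quadratic growth at infinity (directly from~\eqref{eq_def_V_t_quadratic_W} and Assumption~\ref{ass_Vnonquad}), hence $\nu^r_T$ has a finite, positive log-Sobolev constant for each such $N$; combining both regimes yields the bound of the form $N\lambda_T$ claimed in the proposition. I do not anticipate a serious obstacle: the only point that must be handled with a bit of care is the identification $\nabla V_T(\varphi) = \nabla\hat\cF_T(m_\varphi)$, which relies on uniqueness and smoothness of $m_\varphi$, but both follow directly from the log-Laplace transform formula~\eqref{eq_def_V_t_quadratic_W} defining $V_T$.
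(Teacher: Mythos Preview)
Your proposal is correct and follows essentially the same route as the paper: both transfer the Polyak--\L ojasiewicz inequality from $\hat\cF_T$ to $V_T$ via the inf-convolution identity of Lemma~\ref{lemma_link_V_T_cF_T}, using the relation $\nabla V_T(\varphi)=\nabla\hat\cF_T(m_\varphi)=\frac{1}{T}(\varphi-m_\varphi)$, and then invoke \cite[Theorem~1]{CheStr24PL} for $e^{-NV_T}$. The only cosmetic difference is that the paper spells out the $C^1$-regularity of $\hat\cF_T$ (via smoothness and local invertibility of $\varphi\mapsto m_\varphi$) a bit more explicitly than you do, but you correctly flag this as the one point requiring care.
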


\begin{proof}  

  By definition \eqref{eq_def_beta_c}, for  $T>T_c$ the free energy $\cF_T$ has a unique minimiser.
  Therefore $\hat\cF_T$ has a unique minimiser $m^\star$, and Lemma~\ref{lemma_link_V_T_cF_T} implies that $V_T$ has a unique minimum at $m^\star$.
It is shown in \cite[Theorem 1]{CheStr24PL} that if $V_T$ satisfies a  Polyak-\L ojasiewicz inequality  for some constant $\gamma>0$,
\begin{equation}
\label{eq: Polyak-Lojasiewicz VT}
V_T(\varphi) - V_T(m^\star) \leq \frac{1}{2\gamma} \| \nabla V_T (\varphi) \|^2, \quad \forall \gp\in\R^d,
\end{equation}
 then the renormalised measure $\nu^r_T(d\varphi)\propto e^{-NV_T(\varphi)}\, d\varphi$ satisfies a log-Sobolev inequality with constant $N\gamma(1+o_N(1))$, 
which is at least $N\lambda_T$
for some $\lambda_T>0$ and all large enough $N$. 
Thus to conclude Proposition \ref{prop: Polyak-Lojasiewicz}, it is enough to show that 
\eqref{eq: Polyak-Lojasiewicz VT} holds thanks to the assumption on $\hat \cF_T$.

To see this, we use the variational formula derived in Lemma \ref{lemma_link_V_T_cF_T} above: 
for each $\varphi\in\R^d$,
\begin{align}
  V_T(\varphi) 
   = \inf_{m\in\R^d} 
\left\{ \hat \cF_T(m) +\frac{1}{2T} | \varphi -m  |^2 \right \} 
= \hat \cF_T(m_\varphi) +\frac{1}{2T} |\varphi - m_\varphi  |^2.
\label{eq: identite VT F bis}
\end{align}
If $\hat\cF_T$ is regular enough, the argmin is determined as a solution of:   
\begin{align}
\frac{1}{T} ( \varphi - m_\varphi) = \nabla \hat \cF_T(m_\varphi)
.
\end{align}
 To establish regularity of $\hat\cF_T$,  
note from its explicit expression~\eqref{eq_def_V_t_quadratic_W} that $V_T\in C^\infty(\R^d)$. 
The same is therefore true of $\varphi\mapsto m_\varphi = \int x\, \rho_\varphi(dx) = \varphi-T\nabla V_T(\varphi)$. 
Since $m_\varphi$ has differential $\nabla m_\varphi = \cov_{\rho_\varphi}$ which is positive definite for each $\varphi\in\R^d$ by Assumption~\ref{ass_Vnonquad} on $V$, 
the local inversion theorem implies that $m_\varphi^{-1}: m\mapsto \varphi_m$, where $\int x\, \rho_{\varphi_m}(dx)=m$, is also smooth. 
Equation~\eqref{eq: identite VT F bis} then implies  $\hat \cF_T\in C^1(\R^d)$ as desired. 
Assuming sufficient regularity on the potentials, one further has 
\begin{align*}
 \nabla V_T (\varphi) = \frac{1}{T} (  \varphi - m_\varphi ) 
 = \nabla \hat \cF_T(m_\varphi).
\end{align*}
If  $\hat\cF_T$ satisfies a Polyak-\L ojasiewicz inequality \eqref{eq: Polyak-Lojasiewicz hat F} with constant $\gamma_{\rm PL}$,
then we get from \eqref{eq: identite VT F bis} that
$V_T$ satisfies also a Polyak-\L ojasiewicz inequality :
\begin{align}
  V_T(\varphi) 
  -V_T(m^\star)
  \leq  \frac{1}{2\gamma_{\rm PL}} \| \nabla \hat \cF_T(m_\varphi) \|^2 + \frac{T}{2} \|\nabla V_T (\varphi)\|^2
  = \left( \frac{1}{2\gamma_{\rm PL}}  + \frac{T}{2} \right)  \|\nabla V_T (\varphi)\|^2 ,
\end{align}
where we used that $V_T(m^\star) = \hat \cF_T (m^\star)$ by Lemma \ref{lemma_link_V_T_cF_T}.
This implies the inequality \eqref{eq: Polyak-Lojasiewicz VT} and therefore completes the proof.
\end{proof}

\subsection{Critical point for double well potentials  - Proof of Corollary~\ref{cor: double well}}
\label{sec_crit_threshold}

Proposition~\ref{prop_LSI_quadratic_W} implies a log-Sobolev inequality for $m^N_T$ for each temperature such that the renormalised potential $V_T$ is uniformly convex
(by the Bakry--\'Emery criterion for $V_T$).
The aim of this section is to exhibit a class of double-well potentials $V$ for which this criterion is sharp, 
in the sense that uniform convexity of $V_T$ holds for any $T>T_c$, 
with $T_c$ the critical temperature~\eqref{eq_def_beta_c} above which the free energy~\eqref{eq_def_free_energy} has a unique minimiser.

\begin{assumption}[GHS double-well potentials]
\label{ass_doublewell}
Let $d=1$, and
in addition to Assumption~\ref{ass_Vnonquad}, 
assume the potential $V$ is in the Griffiths--Hurst--Simon (GHS) class~\cite{MR395659}. 
That is, $V\in C^1(\R,\R)$ is even with $\lim_{|x|\to \infty}V(x)=+\infty$, 
and the restriction of $V'$ to $[0,\infty)$ is convex. 
\end{assumption}

Note that the potential $V(x)= \frac{x^4}{4} - \lambda\frac{x^2}{2}$ in \eqref{eq: potentiel quadratique} satisfies Assumption~\ref{ass_doublewell}. 
A consequence of Assumption~\ref{ass_doublewell} is the following useful bound on the variance of 
$\alpha_V^h\propto e^{hx}\alpha_V(dx)$:
\begin{equation}
\forall h\in\R,\qquad 
\var_{\alpha_V^h}(x)
\leq 
\var_{\alpha_V}(x)
.
\label{eq: minimum variance h 0}
\end{equation}
\begin{proposition}\label{prop_T_c_convexity_GHS}
Suppose Assumption~\ref{ass_doublewell} applies. 
Then $T_c = \var_{\alpha_V}(x)$ and 
\begin{equation}
\label{eq: lambda T}
 \inf_{\varphi}\partial^2_\varphi V_T(\varphi) = 
\frac{T - T_c}{T^2}.
\end{equation}
As a consequence, $V_T$ is uniformly convex  for $T>T_c$ and 
the first part of Corollary~\ref{cor: double well} follows by applying Proposition~\ref{prop_LSI_quadratic_W}.
\end{proposition}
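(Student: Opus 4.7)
The plan is to compute $\partial^2_\varphi V_T(\varphi)$ explicitly from~\eqref{eq_def_V_t_quadratic_W}, invoke the GHS-type variance bound \eqref{eq: minimum variance h 0} to pin down its infimum, and then match the resulting convexity threshold with $T_c$ via the minimizer correspondence of Lemma~\ref{lemma_link_V_T_cF_T}. Differentiating \eqref{eq_def_V_t_quadratic_W} twice in $\varphi$ (with $d=1$) yields
\[
\partial^2_\varphi V_T(\varphi) = \frac{1}{T} - \frac{1}{T^2}\var_{\alpha_V^{\varphi/T}}(x), \qquad \alpha_V^h(dx)\propto e^{hx}\alpha_V(dx).
\]
The inequality \eqref{eq: minimum variance h 0}, which the paper records as a consequence of Assumption~\ref{ass_doublewell} (the GHS inequality applied to $\alpha_V$), says $\sup_h \var_{\alpha_V^h}(x) = \var_{\alpha_V}(x)$, attained at $h=0$. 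Hence $\inf_\varphi \partial^2_\varphi V_T(\varphi) = (T-\var_{\alpha_V}(x))/T^2$, which matches~\eqref{eq: lambda T} as soon as $T_c = \var_{\alpha_V}(x)$.

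To identify $T_c = \var_{\alpha_V}(x)$, I would first note that $V_T$ inherits evenness from $V$ (via the change of variables $x\mapsto -x$ under $\alpha_V$ in the log-partition term) and is coercive (the $\varphi^2/(2T)$ term dominates the log-partition term for the potentials considered), so $V_T$ attains a global minimum. For $T>\var_{\alpha_V}(x)$ the Hessian formula gives uniform convexity, so the minimum is unique, and Lemma~\ref{lemma_link_V_T_cF_T} transports this to uniqueness of the minimizer of $\cF_T$; this shows $T_c \leq \var_{\alpha_V}(x)$. Conversely, for $T<\var_{\alpha_V}(x)$, $\partial^2_\varphi V_T(0)<0$, so $\varphi=0$ is not a local minimum; evenness and coercivity then force two distinct symmetric global minimizers $\pm\varphi^\star\neq 0$ of $V_T$, and Lemma~\ref{lemma_link_V_T_cF_T} lifts this to multiple minimizers for $\cF_T$. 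Hence $T_c \geq \var_{\alpha_V}(x)$, which combined with the reverse inequality gives $T_c = \var_{\alpha_V}(x)$ and~\eqref{eq: lambda T}.

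For the final assertion, the uniform lower bound $\He V_T \geq (T-T_c)/T^2 > 0$ on $\R$ for $T>T_c$, combined with the Bakry--\'Emery criterion, yields a log-Sobolev inequality for $\nu^r_T \propto e^{-N V_T(\varphi)}\, d\varphi$ with constant $N\lambda_T$, $\lambda_T = (T-T_c)/T^2$. Plugging this into Proposition~\ref{prop_LSI_quadratic_W} delivers the uniform-in-$N$ log-Sobolev inequality for $m^N_T$ claimed in the first part of Corollary~\ref{cor: double well}. The main obstacle, beyond the short Hessian computation and invoking the GHS bound, is matching the sign change of $\partial^2_\varphi V_T(0)$ at $\var_{\alpha_V}(x)$ with \emph{non-uniqueness} of minimizers of $\cF_T$: at the threshold $V_T$ may have only a degenerate critical point at the origin, so the non-uniqueness side of the argument cannot rely on strict convexity and must route through evenness and coercivity of $V_T$ together with the correspondence in Lemma~\ref{lemma_link_V_T_cF_T}.
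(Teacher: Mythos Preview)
Your proposal is correct and follows essentially the same route as the paper: compute $\partial^2_\varphi V_T(\varphi)=\frac{1}{T}-\frac{1}{T^2}\var_{\alpha_V^{\varphi/T}}(x)$, use the GHS variance bound~\eqref{eq: minimum variance h 0} to locate the infimum at $\varphi=0$, and then read off $T_c=\var_{\alpha_V}(x)$ from the sign change of $\partial^2_\varphi V_T(0)$ combined with evenness of $V_T$ and the minimiser correspondence of Lemma~\ref{lemma_link_V_T_cF_T}. The only differences are cosmetic: you spell out coercivity and the Bakry--\'Emery step explicitly, whereas the paper leaves these implicit.
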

\begin{proof}[Proof of Proposition~\ref{prop_T_c_convexity_GHS}]
An elementary computation using the definition of the renormalised potential~\eqref{eq_def_V_t_quadratic_W} and \eqref{eq: minimum variance h 0} (from Assumption~\ref{ass_doublewell}) gives:
\begin{equation}
\partial^2_\varphi V_T(\varphi) =
\frac{1}{T} - \frac{1}{T^2} \var_{\alpha_V^{\varphi / T}} (x)
\geq 
\partial^2_\varphi V_T(0) = \frac{1}{T} - \frac{1}{T^2} \var_{\alpha_V} (x)
.
\label{eq : Borne hessian_V_t}
\end{equation}
This implies that $V_T$ is uniformly convex for any $T>\var_{\alpha_V}(x)$. 
 Furthermore, for $T < \var_{\alpha_V}(x)$, then $\partial^2_\varphi V_T (0) <0$ so that the even function  $V_T$ has at least two distinct minimisers.
 By Lemma~\ref{lemma_link_V_T_cF_T}, this implies that $T_c = \var_{\alpha_V}(x)$ as $\cF_T$ and $V_T$ have the same number of global minimisers.
\end{proof}
Under Assumption~\ref{ass_doublewell} on $V$, 
we further characterise the behaviour of the log-Sobolev constant close to $T_c$. 
The following two propositions complete the proof of Corollary~\ref{cor: double well}.
\begin{proposition}[Lower bound]
\label{prop: lower bound Phi4}
Suppose Assumption~\ref{ass_doublewell} on $V$. 
Recall that $\gamma_V$ denotes a uniform bound on the log-Sobolev constant of $\alpha_V^h(dx)\propto e^{hx}\alpha_V(dx)$ ($h\in\R$). 
Then,  for each $T>T_c$, the log-Sobolev constant $\gamma^N_{\rm LS}(T)$ of the mean-field measure $m^N_T$ satisfies:
\begin{equation}
\frac{1}{\gamma^N_{\rm LS}(T)}
\leq 
\frac{1}{\gamma_V}+
\frac{1
}{(T-T_c)\gamma_V^2}
.
\end{equation}
\end{proposition}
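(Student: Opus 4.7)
The plan is to combine the general criterion Proposition~\ref{prop_LSI_quadratic_W} with the explicit convexity information on the renormalised potential $V_T$ supplied by Proposition~\ref{prop_T_c_convexity_GHS}. Since the latter gives a quantitative uniform lower bound on $\He V_T$, the Bakry--\'Emery criterion produces the log-Sobolev inequality for $\nu^r_T$ required as input to Proposition~\ref{prop_LSI_quadratic_W}, and the desired estimate then follows after a one-line arithmetic simplification. No additional structural ingredient beyond what has already been developed in Sections~\ref{sec_LSI_HT_quadratic} and~\ref{sec_crit_threshold} is needed.

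Concretely, I would first invoke Proposition~\ref{prop_T_c_convexity_GHS}, which under Assumption~\ref{ass_doublewell} identifies $T_c=\var_{\alpha_V}(x)$ and establishes
\[
\inf_{\varphi\in\R}\partial^2_\varphi V_T(\varphi) \;=\; \frac{T-T_c}{T^2}.
\]
For $T>T_c$ this yields $\partial^2_\varphi V_T\geq \lambda_T$ uniformly in $\varphi$ with $\lambda_T:=(T-T_c)/T^2>0$. Applying Bakry--\'Emery to the renormalised measure $\nu^r_T(d\varphi)\propto e^{-NV_T(\varphi)}\,d\varphi$, whose potential $NV_T$ has Hessian bounded below by $N\lambda_T\,\id$, yields a log-Sobolev inequality for $\nu^r_T$ with constant $N\lambda_T$. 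Inserting this constant into Proposition~\ref{prop_LSI_quadratic_W} gives
\[
\frac{1}{\gamma^N_{\rm LS}(T)}\;\leq\;\frac{1}{\gamma_V}+\frac{1}{\gamma_V^2 T^2\,\lambda_T}\;=\;\frac{1}{\gamma_V}+\frac{1}{\gamma_V^2\,(T-T_c)},
\]
where the final equality is simply $T^2\lambda_T=T-T_c$.

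There is essentially no serious obstacle: the GHS hypothesis enters only through the variance monotonicity $\var_{\alpha_V^h}(x)\leq \var_{\alpha_V}(x)$ already used in Proposition~\ref{prop_T_c_convexity_GHS}, and the $1/T^2$ factor produced by Proposition~\ref{prop_LSI_quadratic_W} conveniently cancels against the $T^2$ in the denominator of $\lambda_T$, so the divergence rate as $T\downarrow T_c$ is exactly $(T-T_c)^{-1}$ as announced in Corollary~\ref{cor: double well}. The only point worth double-checking is the bookkeeping of $N$: the log-Sobolev constant for $\nu^r_T$ must enter Proposition~\ref{prop_LSI_quadratic_W} as $N\lambda_T$ rather than $\lambda_T$, which is consistent with the factor of $N$ in the exponent of $\nu^r_T$ appearing in~\eqref{eq_renorm_measure_fluct_measure} and is precisely why the final bound is independent of $N$.
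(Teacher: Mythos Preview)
Your proposal is correct and matches the paper's own proof essentially line for line: the paper invokes Proposition~\ref{prop_LSI_quadratic_W} (with $\lambda_T$ supplied via Bakry--\'Emery from the uniform convexity of $V_T$) and then substitutes the formula $\inf_\varphi\partial^2_\varphi V_T(\varphi)=(T-T_c)/T^2$ from Proposition~\ref{prop_T_c_convexity_GHS}, exactly as you do. Your remark about the factor $N$ is also accurate and is implicit in the paper's application of Proposition~\ref{prop_LSI_quadratic_W}.
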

\begin{proof}
Proposition~\ref{prop_LSI_quadratic_W} implies the following bound on the log-Sobolev constant:
\begin{equation}\label{eq_bound_LSI_constant_quadratic_W}
\frac{1}{\gamma^{N}_{\mathrm{LS}}(T)}\leq \frac{1}{\gamma_V} + \frac{ 1 }{T^2\, \gamma_V^2\, \inf_{\varphi\in\R}\partial^2_{\varphi}V_T(\varphi)}
.
\end{equation}
The lower bound \eqref{eq: lambda T} concludes the derivation of Proposition \ref{prop: lower bound Phi4}. 
\end{proof}
To get a matching upper bound, we just need to find a good test function. 
\begin{proposition}[Upper bound]\label{prop_upper_bound_LSI_quadratic}
Let $V$ satisfy Assumption~\ref{ass_doublewell}. 
There is $C>0$ such that, for all $T > T_c$, one gets for  $N$  large enough 
\begin{equation}
\gamma^N_{\mathrm{LS}}(T)
\leq  C (T-T_c) .
\end{equation}
\end{proposition}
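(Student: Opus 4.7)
The plan is to upper-bound the Poincaré constant $\gamma^N_{\mathrm{P}}(T)$ and then invoke the elementary implication $\gamma^N_{\mathrm{LS}}(T)\leq \gamma^N_{\mathrm{P}}(T)$ (obtained by applying the log-Sobolev inequality \eqref{eq: LSI definition} to $F=1+\epsilon G$ and expanding to second order in $\epsilon$). The variational characterisation of $\gamma^N_{\mathrm{P}}(T)$ together with the test function $\bar x:=\tfrac{1}{N}\sum_{i=1}^N x_i$, for which $|\nabla \bar x|^2=1/N$, yields
\begin{equation}
\gamma^N_{\mathrm{LS}}(T)\leq \gamma^N_{\mathrm{P}}(T)\leq \frac{1}{N\,\var_{m^N_T}(\bar x)},
\end{equation}
so everything reduces to proving the matching lower bound $\var_{m^N_T}(\bar x)\geq c/(N(T-T_c))$ for a constant $c>0$ independent of $T$ and $N$ large enough.

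The measure decomposition \eqref{eq_meas_decomp_quadratic} and the law of total variance give $\var_{m^N_T}(\bar x)\geq \var_{\nu^r_T}(m_\varphi)$, where $m_\varphi = \int x\,\alpha_V^{\varphi/T}(dx)=\varphi-TV_T'(\varphi)$ is the mean of the tilted one-body measure. Since $\partial_\varphi m_\varphi|_{\varphi=0}=1-TV_T''(0)=T_c/T>0$ by Proposition~\ref{prop_T_c_convexity_GHS}, a lower bound of order $\sigma^2:=T^2/(N(T-T_c))$ on $\var_{\nu^r_T}(\varphi)$ will transfer to the same order for $\var_{\nu^r_T}(m_\varphi)$ with a $T$-independent constant, so the remaining goal is to prove $\var_{\nu^r_T}(\varphi)\geq c\sigma^2$.

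This estimate follows from a Laplace-type computation at the unique minimum $\varphi=0$ of $V_T$. Global convexity $V_T(\varphi)\geq V_T(0)+\tfrac12 V_T''(0)\varphi^2$ from Proposition~\ref{prop_T_c_convexity_GHS} yields by direct Gaussian comparison $Z:=\int e^{-NV_T(\varphi)}\,d\varphi\leq e^{-NV_T(0)}\sigma\sqrt{2\pi}$. In the opposite direction, symmetry of $V$ forces $V_T'''(0)=0$ while $V_T''''(0)=-\kappa_4(\alpha_V)/T^4$ is controlled by the fourth cumulant of $\alpha_V$; Taylor's formula then shows that for $|\varphi|\leq \sigma$ and $N$ sufficiently large the quartic remainder is dominated by the quadratic term $\tfrac12 V_T''(0)\sigma^2 = 1/(2N)$, so $V_T(\varphi)-V_T(0)\leq 1/N$ and $e^{-NV_T(\varphi)}\geq e^{-NV_T(0)-1}$ on $[-\sigma/2,\sigma/2]$. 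Taking the ratio,
\begin{equation}
\var_{\nu^r_T}(\varphi)\geq \frac{1}{Z}\int_{-\sigma/2}^{\sigma/2} \varphi^2\,e^{-NV_T(\varphi)}\,d\varphi \geq \frac{e^{-1}}{12\sqrt{2\pi}}\,\sigma^2,
\end{equation}
which is the desired lower bound with a universal constant.

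The principal obstacle is the uniformity in $T>T_c$: one must verify that the quartic Taylor remainder for $V_T$, of order $1/(N^2(T-T_c)^2)$ on the scale $\sigma$, is indeed negligible compared to $1/N$, which amounts to requiring $N(T-T_c)^2\gg 1$ and is exactly the content of the ``$N$ large enough'' qualifier in the statement (so that $N_0$ may depend on $T$ while the final constant $C\sim 1/(c T_c^2)$ does not).
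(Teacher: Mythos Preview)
Your proof is correct and follows essentially the same route as the paper: test the Poincar\'e inequality with the magnetisation, use the decomposition $m^N_T=\nu^r_T\mu^{\varphi}_T$ to reduce to a variance under $\nu^r_T$, and estimate the latter by a Laplace argument at the unique minimum $\varphi=0$ of $V_T$. The paper cites a standard Laplace asymptotic, while you carry out the two-sided Gaussian comparison by hand; both give the same $\sigma^2=T^2/(N(T-T_c))$ scale.

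One point to tighten: the sentence ``a lower bound of order $\sigma^2$ on $\var_{\nu^r_T}(\varphi)$ will transfer to the same order for $\var_{\nu^r_T}(m_\varphi)$'' is not valid as a general implication, because $m'_\varphi=T^{-1}\var_{\alpha_V^{\varphi/T}}(x)$ has no uniform lower bound in $\varphi$ (it decays at infinity for e.g.\ the quartic potential). What actually makes the transfer work is that your displayed computation establishes the \emph{localised} estimate $\E_{\nu^r_T}[\varphi^2\mathbf{1}_{|\varphi|\leq\sigma/2}]\geq c\sigma^2$, and on that window $|m_\varphi|\geq \tfrac{T_c}{2T}|\varphi|$ by the Taylor expansion $m_\varphi=(T_c/T)\varphi+O(\varphi^3)$ (oddness kills the quadratic term). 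Combining these directly gives $\E_{\nu^r_T}[m_\varphi^2]\geq \tfrac{c\,T_c^2}{4T^2}\sigma^2=\tfrac{c\,T_c^2}{4N(T-T_c)}$, which is exactly the bound you need with a $T$-independent constant. This is also how the paper proceeds: it lower-bounds $\E_{\mu^\varphi_T}[x_1]$ linearly in $\varphi$ for small $\varphi$ and then integrates.
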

\begin{proof}
A log-Sobolev inequality implies a Poincar\'e inequality with the same constant. 
Taking as test function $F = N^{-1/2}\sum_{i}x_i$, 
we find:
\begin{equation}
\gamma^N_{\mathrm{LS}}(T)
\leq 
\frac{1}{\chi^N_T},
\qquad
\chi^N_T
:=
\sum_{i=1}^N\cov_{m^N_T}(x_1,x_i)
=
\E_{m^N_T}\Big[\Big(\frac{1}{\sqrt{N}} \sum_{i=1}^N  x_i\Big)^2\Big]
.
\label{eq_link_LSI_suscep}
\end{equation}
The susceptibility $\chi^N_T$ can be bounded using the measure decomposition~\eqref{eq_meas_decomp_quadratic} as we now explain. 
One has 
\begin{align}
\chi^N_T
&=
\E_{\nu_T}\Big[\var_{\mu^\varphi_T}\Big(\frac{1}{\sqrt{N}}\sum_i x_i\Big)\Big]
+\var_{\nu_T}\Big(\E_{\mu^\varphi_T}\Big[\frac{1}{\sqrt{N}}\sum_i x_i\Big]\Big)
\nnb
&\geq
N\var_{\nu_T}\Big(\E_{\mu^\varphi_T}[x_1] \Big)
=
N \E_{\nu_T} \Big[ \E_{\mu^\varphi_T}[x_1]^2 \Big] ,
\label{eq: borne inf variance}
\end{align}
using the symmetry between the random variables and the fact that $\E_{\nu_T}\big[\E_{\mu^\varphi_T}[x_1] \big] = \E_{m^N_T}[x_1] = 0$.

Estimating \eqref{eq: borne inf variance} from below boils down to studying one dimensional measures. 
For $\varphi$ small, one can easily estimate by Taylor expansion,  the behavior of the expectation
\begin{equation}
\E_{\mu^\varphi_T}[x_1] = \E_{\alpha_V^{\varphi/T}}[x_1] \geq \frac{1}{2 T}  \var_{\alpha_V^0}[x_1] \, \varphi.
\label{eq: variance phi petit}
\end{equation}
For $T > T_c$, the renormalised potential is a convex function reaching its minimum at 0 and the second derivative at 0 is given by $\lambda_T = \frac{T- T_c}{T^2}$
\eqref{eq: lambda T}.
Thus the field $\gp$ under the renormalised measure is concentrated close to 0 on a set of size $1/\sqrt{N \lambda_T}$. 
As a consequence, for some $c(T)>0$
\begin{equation}
\E_{\nu_T}[\varphi^2{\bf 1}_{|\varphi|\geq 1}]
\leq 
e^{-Nc(T)}
.
\end{equation}
From this upper bound and  \eqref{eq: variance phi petit}, we get  
\begin{equation}
\E_{\nu_T}\Big[\E_{\mu^\varphi_T}[x_1]^2\Big]
\geq
\frac{\var_{\alpha_V^0}[x_1]^2} {4 T^2 \int_\R e^{-NV_T(\varphi)}\, d\varphi}
\times 
\int_{\R} \varphi^2 e^{-NV_T(\varphi)}\, d\varphi 
+e^{-Nc(T)}.
\label{eq_suscep_is_ratio}
\end{equation}
The field $\gp$ in the right-hand side and
 \eqref{eq_suscep_is_ratio} can be estimated by using Laplace method (see e.g. \cite[Theorem~3, p.495]{Wong1989AsymptoticAO}) when $N$ tends to infinity :
 \begin{equation}
\E_{\nu_T}\Big[\E_{\mu^\varphi_T}[x_1]^2\Big]
\geq
\frac{\var_{\alpha_V^0}[x_1]^2} {4 T^2 \, N \lambda_T}(1+o_N(1)) = \frac{C(1+o_N(1))}{N (T - T_c)}.
\end{equation}
Combined with \eqref{eq_link_LSI_suscep} and \eqref{eq: borne inf variance}, this completes the proof of Proposition \ref{prop_upper_bound_LSI_quadratic}.
\end{proof}

\begin{remark}
By adapting the proof of Proposition \ref{prop_upper_bound_LSI_quadratic}, one could also compute the divergence with respect to $N$ of the log-Sobolev constant at $T_c$.
\end{remark}

\section{Quadratic interaction potential on random graphs}\label{sec_graphs}
In this section, we prove Theorem~\ref{thm: quadratic interaction random} 
for models with quadratic interactions indexed by graphs satisfying Assumption~\ref{ass_graph} below. 
For a confinement potential $V$ satisfying Assumption~\ref{ass_doublewell},
we first prove an explicit bound on the log-Sobolev constant when $T>T_c$ in 
Theorem~\ref{theo_graph_sec_graph}. 
The $T<T_c$ case is treated in Section~\ref{sec_ER_RRG}. 
Here $T_c$ refers to the critical temperature of the fully connected mean-field model introduced in \eqref{eq_def_beta_c}.

\subsection{Assumption on graphs}

For a graph $G_N$, denote its adjacency matrix by $A$:
\begin{equation}
A_{ij}
=
{\bf 1}_{i\sim j}
,\quad 
A_{ii}
=
0
,\qquad 
i,j\in G_N
.
\end{equation}
We consider sequences $(G_N)_{N\geq 1}$ under the sole assumption that the largest eigenvalue in the spectrum of $A$ is isolated in the following sense. 
Let $\|M\|$ denote the operator norm of a matrix $M$:
\begin{equation}
\|M\|
=
\sup_{\substack{x,y\in\R^N \\ |x|=1=|y|}} (Mx,y)
.
\end{equation}
Let also $P = \frac{1}{N}{\bf 1}\otimes{\bf 1}$ denote the orthonormal projector on the constant mode ${\bf 1} = (1,\dots,1)$. 
\begin{assumption}\label{ass_graph}
There are sequences $d_N,\epsilon_N>0$ ($N\geq 1$) such that $\lim_{N\to\infty}\epsilon_N=0$ and the adjacency matrix of the graph satisfies  
\begin{equation}
\|A - d_N P\|
\leq 
\epsilon_N d_N
.
\label{eq_bound_spectrum_in_ass_graph}
\end{equation}
\end{assumption}
Assumption~\ref{ass_graph}  holds with large probability for different types of random graphs as stated next. 
\begin{lemma}\label{lemm_sp_ER_RRG}
Let $\E_N$ denote the expectation associated with a random regular graph with degree $d_N$ or an Erd\"os-R\'enyi random graph with mean degree $d_N$ and assume $\lim_{N\to\infty}d_N/\log N=+\infty$. 
Then:
\begin{equation}
\E_N\Big[\, \|A - d_N P\|\, \Big]
=
O(\sqrt{d_N})
.
\end{equation}
In the random regular graph case assuming only $\lim_N d_N=+\infty$ is in fact sufficient. 
\end{lemma}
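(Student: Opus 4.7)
The plan is to exploit the fact that $P={\bf 1}\otimes{\bf 1}/N$ is the orthogonal projector onto the constant vector, so that subtracting $d_N P$ from $A$ precisely removes the trivial Perron eigenvalue, leaving only a bulk that ought to be of order $\sqrt{d_N}$. The two graph models are then handled separately.

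For a random $d_N$-regular graph, ${\bf 1}$ is an exact eigenvector of $A$ with eigenvalue $d_N$, hence $A$ and $P$ commute and $A-d_N P$ acts as $A$ on ${\bf 1}^\perp$ and as zero on $\R{\bf 1}$. Consequently
\begin{equation}
\|A-d_N P\|=\max_{i\geq 2}|\lambda_i(A)|,
\end{equation}
the second-largest eigenvalue of $A$ in absolute value. The required $O(\sqrt{d_N})$ estimate is then exactly the Alon-type bound for uniform random regular graphs: for bounded $d_N$ it follows from Friedman's theorem, and for $d_N\to\infty$, in either the sparse or dense regime, from the quantitative results of Tikhomirov--Youssef and Cook. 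I would invoke these as black boxes; an expectation bound follows from the corresponding high-probability bound combined with the deterministic estimate $\max_{i\geq 2}|\lambda_i(A)|\leq d_N$ on the exceptional event.

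For the Erd\"os--R\'enyi graph with parameter $p_N=d_N/N$, I would decompose
\begin{equation}
A-d_N P = (A-\E[A]) + (\E[A]-d_N P).
\end{equation}
Since $\E[A]=p_N(J-I)=d_N P-p_N I$ with $J=NP$ the all-ones matrix, the deterministic remainder equals $-p_N I$, whose operator norm $d_N/N$ is $o(\sqrt{d_N})$. The random fluctuation $A-\E[A]$ is a symmetric matrix with independent, centred, $[-1,1]$-bounded entries of variance at most $p_N$. A standard concentration inequality for sparse Wigner-type matrices (Feige--Ofek, or Bandeira--van Handel) then yields $\E\|A-\E[A]\|=O(\sqrt{Np_N})=O(\sqrt{d_N})$ as soon as $d_N/\log N\to\infty$; this threshold is precisely what is required to keep the maximum degree of the graph comparable to its mean.

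The main obstacle is the regular graph case under the sole assumption $d_N\to\infty$, since entrywise independence is broken by the rigid degree constraint and the sparse random-matrix concentration tools used for Erd\"os--R\'enyi are not directly available. Overcoming this is the content of the switching and non-backtracking trace method arguments in the works cited above, which I would import without reproving. The Erd\"os--R\'enyi statement, by contrast, reduces cleanly to the mean decomposition together with off-the-shelf concentration for independent entries.
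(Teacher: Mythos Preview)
Your proposal is correct and follows essentially the same approach as the paper: both treat the lemma as a direct corollary of known spectral-gap results, citing Tikhomirov--Youssef and Cook for random regular graphs and a standard concentration bound for the centred adjacency matrix in the Erd\H{o}s--R\'enyi case. Your presentation is slightly more explicit---you spell out the commutation argument $\|A-d_NP\|=\max_{i\geq 2}|\lambda_i(A)|$ for regular graphs and the triangle-inequality decomposition $A-d_NP=(A-\E[A])-p_N I$ for Erd\H{o}s--R\'enyi---whereas the paper simply quotes the bounds on $\E_N\|A-\E_N[A]\|$ and leaves the (trivial) passage to $\|A-d_NP\|$ implicit, but the substance is identical.
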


\begin{proof}
For the random regular graph,
the claim is proven in~\cite[Theorem A]{MR3909972} (if $d_N\geq N^{\alpha}$ for some $\alpha>0$) and~\cite[Theorem 1.1]{Cook_gap_RRG} (if $1\ll d_N =O(N^{2/3})$) which state: 
\begin{equation}
\E_N\big[\, \| A-\E_N[A]\|\, \big] 
=
O(\sqrt{d_N})
.
\label{eq_concentration_operator_norm_A}
\end{equation}
In the Erd\"os-R\'enyi case, 
the claim is a special case of Theorem 3.2 in~\cite{spectrum_adj_ER} which in particular states:
\begin{equation}
\E_N\big[ \| A-\E_N[A] \, \|\, \big]
=
2 \sqrt{d_N}\, (1+o_N(1))
.
\label{eq_concentration_Erdos_Renyi}
\end{equation}
\end{proof}
\begin{remark}\label{rmk_more_graphs}
Assumption~\ref{ass_graph} says that the eigenvalues of $A -d_NP$ are negligible with respect to $d_N$. 
This is a natural condition for us because $P=\frac{1}{N}{\bf 1}\otimes{\bf 1}$ corresponds to $\E_N[A]$ when $A$ is the adjacency matrix of the models of random graphs   considered in Theorem~\ref{thm: quadratic interaction random}. 

However, the specific choice of the projector $P$ onto the span of $\bf 1$ is not necessary.
For example, if the  graph is a good expander, in the sense that: 
\begin{equation}
\|A-d_NP_N\|
\leq 
\epsilon_N d_N
,
\end{equation}
where $d_N$ is now the Perron--Frobenius eigenvalue of $A$ and $P_N$ is now the orthonormal projector on the corresponding eigenspace (which is not necessarily $\bf 1$ if the graph is not regular),
then one  can check that the uniform log-Sobolev inequality of Theorem~\ref{thm: nonquadratic mean-field}(i) for $T>T_c$ still holds under this assumption, 
with a nearly identical proof. 
\end{remark}

\subsection{Measure decomposition}
Introduce the reduced adjacency matrix:
\begin{equation}
B
:=
A- d_NP,\qquad 
P
=
\frac{1}{N}{\bf 1}\otimes{\bf 1}
.
\label{eq: def B}
\end{equation}
Using the relation $A=B+d_NP$,  we proceed as in \eqref{eq_renorm_measure_fluct_measure} and decompose the Gibbs measure introduced in \eqref{eq: measure_graph}
into two measures:
\begin{equation}
\E_{m^{G_N}_T}[F]
=
\E_{\nu^B_{r,T}}\Big[\E_{\mu^{B,\varphi}_T}[F]\Big],
\label{eq: graph mesure decomposition}
\end{equation}
where the fluctuation measure is defined in terms of the external field $\varphi \in \R$
as 
\begin{equation}
\mu^{B,\varphi}_T(dx)
\propto
\exp \bigg[  \frac{1}{2Td_N} 
(x, B x) +\frac{\varphi (x,{\bf 1})}{T}\bigg] 
\prod_{i=1}^N \alpha_V(dx_i) ,
\label{eq_def_fluct_measure_graph}
\end{equation}
and the renormalised measure $\nu^B_{r,T}\propto e^{-NV^B_T(\varphi)}\, d\varphi$ now involves an $N$-dependent renormalised potential $V^B_{T}$ that reads:
\begin{equation}
e^{-NV^B_{T}(\varphi)}
=   \exp \left[ -\frac{N \, \varphi^2}{2T} \right] \;
\int \exp \left[ \frac{1}{2Td_N} 
(x,B x) +\frac{\varphi (x,{\bf 1})}{T}\right] 
\prod_{i=1}^N \alpha_V(dx_i)
.
\end{equation}

Compared with \eqref{eq_renorm_measure_fluct_measure}, the fluctuating measure is no longer product.  Nevertheless, 
we will see that, under Assumption \ref{ass_graph}, 
the contribution of  $B$ can be neglected and the behaviour of the renormalised potential is accurately described in terms of the product measure:
\begin{equation}
\mu^{0,\varphi}_T(dx) \propto
\exp\Big[  \frac{\varphi(x,{\bf 1})}{T}\Big] \prod_{i=1}^N\alpha_V(dx_i)
. 
\label{eq_def_fluct_measure_P}
\end{equation}
We write a superscript $0$ to emphasise the difference with $\mu^{B,\varphi}_T$, 
but note that this measure is exactly the fluctuation measure appearing in the decomposition~\eqref{eq_renorm_measure_fluct_measure} of the mean-field measure $m^N_T$~\eqref{eq_meas_decomp_quadratic}  (i.e.~on the complete graph). 
We similarly write $\nu^0_{r,T}$ for the mean-field renormalised measure: 
\begin{equation}
\nu^0_{r,T}(d\varphi)
\propto 
e^{-NV_T^0 (\varphi)}\, d\varphi
,
\end{equation}
with the mean-field renormalised potential $V^0_T$ given by~\eqref{eq_def_V_t_quadratic_W}:
\begin{equation}
e^{-NV^0_T(\varphi)}
=
\int_{\R^N} \exp\Big[-\frac{N \, \varphi^2}{2T} + \frac{\varphi(x,{\bf 1})}{T}\Big]\,\prod_{i=1}^N \alpha_V(dx_i)
,\qquad 
\varphi\in\R
.
\label{eq_def_VP_T}
\end{equation}

The next theorem shows that the log-Sobolev inequality for $m^{G_N}_T$ is determined by the critical temperature  of the mean-field model.
\begin{theorem}
\label{theo_graph_sec_graph}
Let $G_N$ satisfy Assumption~\ref{ass_graph} and $V$ satisfy Assumption~\ref{ass_doublewell}. 
Let $T>T_c$, 
the critical temperature \eqref{eq_def_beta_c} in the mean-field case. 
Then, for $N$ large enough depending only on $T,\gamma_V$, the measure $m^{G_N}_T$ satisfies a log-Sobolev inequality with constant:
\begin{equation}
\frac{1}{\gamma^{G_N}_{\rm LS}(T)}
\leq 
\frac{1}{(T-T_c)\gamma_V^2} + \frac{1}{\gamma_V} + O(\epsilon_N)
,
\label{eq_bound_LSI_graph_sec_graph}
\end{equation}
where the constant $\gamma_V$ depends only on the confinement potential $V$.
\end{theorem}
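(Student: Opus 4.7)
The plan is to follow the template of the mean-field proof (Proposition~\ref{prop_LSI_quadratic_W}, combined with the sharp convexity bound from Proposition~\ref{prop_T_c_convexity_GHS}), applied to the graph decomposition \eqref{eq: graph mesure decomposition}. The two new ingredients compared with the mean-field case are a log-Sobolev inequality for the non-product fluctuation measure $\mu^{B,\varphi}_T$ and a uniform convexity estimate for the $N$-dependent renormalised potential $V^B_T$. The spectral bound $\|B\|\leq \epsilon_N d_N$ from Assumption~\ref{ass_graph} will be what makes both perturbations controllable.

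Concretely, I would start from the entropy decomposition induced by \eqref{eq: graph mesure decomposition}, writing $G(\varphi)=\E_{\mu^{B,\varphi}_T}[F^2]^{1/2}$ and
\begin{equation}
\ent_{m^{G_N}_T}(F^2)
=
\E_{\nu^B_{r,T}}\big[\ent_{\mu^{B,\varphi}_T}(F^2)\big]
+\ent_{\nu^B_{r,T}}\big(G(\varphi)^2\big).
\end{equation}
For the first term, I need a log-Sobolev inequality for $\mu^{B,\varphi}_T$ uniform in $\varphi$. The product measure $\mu^{0,\varphi}_T$ of \eqref{eq_def_fluct_measure_P} satisfies LSI$(\gamma_V)$ by Assumption~\ref{ass_Vnonquad} together with tensorisation, and $\mu^{B,\varphi}_T\propto e^{(x,Bx)/(2Td_N)}\mu^{0,\varphi}_T$. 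Since the perturbing potential has Hessian $B/(Td_N)$ of operator norm at most $\epsilon_N/T$, a Bakry--\'Emery type perturbation argument (stability of LSI under perturbations of log-density with small Hessian) produces an LSI constant $\tilde\gamma_V = \gamma_V -O(\epsilon_N/T)$, again uniform in $\varphi$.

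For the second term, I compare $V^B_T$ with the mean-field renormalised potential $V^0_T$ from \eqref{eq_def_VP_T}:
\begin{equation}
V^B_T(\varphi)-V^0_T(\varphi)
=
-\frac{1}{N}\log\E_{\mu^{0,\varphi}_T}\Big[\exp\!\Big(\tfrac{(x,Bx)}{2Td_N}\Big)\Big].
\end{equation}
Under the i.i.d.\ product measure $\mu^{0,\varphi}_T$, a cumulant expansion of the right-hand side is controlled via the Frobenius bound $\mathrm{tr}(B^2)\leq N\|B\|^2\leq N\epsilon_N^2d_N^2$ and the uniformly bounded moments of $\alpha_V^{\varphi/T}$ under Assumption~\ref{ass_doublewell}. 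Differentiating twice in $\varphi$, this comparison yields $|\partial_\varphi^2(V^B_T-V^0_T)|=o_N(1)$ uniformly in $\varphi$. Combined with the sharp lower bound $\partial_\varphi^2 V^0_T\geq (T-T_c)/T^2$ from \eqref{eq: lambda T}, this gives, for $N$ large enough,
\begin{equation}
\partial_\varphi^2 V^B_T(\varphi)
\geq
\lambda_T^B:=\frac{T-T_c}{T^2}-O(\epsilon_N)>0,
\end{equation}
so that Bakry--\'Emery delivers an LSI for $\nu^B_{r,T}$ with constant $N\lambda_T^B$.

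Finally, I would repeat the covariance estimate from the proof of Proposition~\ref{prop_LSI_quadratic_W}: as in \eqref{e:nablaG}, $\nabla_\varphi G(\varphi)$ equals $\cov_{\mu^{B,\varphi}_T}(F^2,(x,\mathbf 1)/T)/(2G(\varphi))$, and applying Lemma~\ref{lemm_BH} to $H(x)=(x,\mathbf 1)$ (with $|\nabla H|^2=N$) with the LSI constant $\tilde\gamma_V$ from Step 2 bounds this covariance. Inserting these estimates in the entropy decomposition and summing yields
\begin{equation}
\frac{1}{\gamma^{G_N}_{\rm LS}(T)}
\leq
\frac{1}{\tilde\gamma_V}+\frac{1}{T^2\tilde\gamma_V^2\lambda_T^B}
=
\frac{1}{\gamma_V}+\frac{1}{(T-T_c)\gamma_V^2}+O(\epsilon_N),
\end{equation}
which is \eqref{eq_bound_LSI_graph_sec_graph}. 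The principal technical difficulty is the uniform-in-$\varphi$ convexity of $V^B_T$ in Step~3: one needs cumulant-type estimates for the quadratic form $(x,Bx)/(2Td_N)$ under the i.i.d.\ measure that are valid uniformly in the external field $\varphi$ and pass smoothly to the second derivative. The LSI perturbation step for $\mu^{B,\varphi}_T$ is technically analogous but easier, while the covariance and combination steps are essentially verbatim from the mean-field case.
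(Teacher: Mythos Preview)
Your overall architecture matches the paper's exactly: entropy decomposition along \eqref{eq: graph mesure decomposition}, an LSI for the fluctuation measure $\mu^{B,\varphi}_T$ uniform in $\varphi$, a lower bound on $\partial^2_\varphi V^B_T$ via comparison with $V^0_T$, and the covariance step via Lemma~\ref{lemm_BH}. The differences are in how Steps~2 and~3 are actually executed, and Step~3 contains a genuine gap.

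For Step~2 you invoke ``stability of LSI under perturbations of log-density with small Hessian''. No such black-box result exists in the generality you need: the measure $\alpha_V^h$ does \emph{not} obtain its LSI from Bakry--\'Emery (the double well is non-convex), so a small Hessian perturbation cannot simply be subtracted from the curvature. What the paper does (Proposition~\ref{prop_reduced_adj_LSI}) is perform a \emph{second} Gaussian decomposition: shift $B/d_N$ to $C=B/d_N+2\epsilon_N\id$ so that $C$ is positive with spectrum in $[\epsilon_N,3\epsilon_N]$, linearise $e^{(x,Cx)/2T}$ via an $\R^N$-valued Gaussian field $y$, and obtain a product fluctuation measure (LSI $\gamma_V$) together with a renormalised potential on $\R^N$ whose Hessian is $\geq (1/(3T\epsilon_N)-1/(T^2\gamma_V))\id$, hence enormous. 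The entropy decomposition plus Lemma~\ref{lemm_BH} then yields LSI for $\mu^{B,\varphi}_T$ with constant $\gamma_V+O(\epsilon_N)$. Your Step~2 can be repaired along different lines (use the $V=V_c+\tilde V$ splitting from Assumption~\ref{ass_Vnonquad}, absorb $B/(Td_N)$ into the Hessian of $\sum_i V_c(x_i)$, then apply Holley--Stroock/Aida--Shigekawa for $\tilde V$), but as written it is an assertion, not an argument.

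Step~3 is where the real problem lies. You claim that a cumulant expansion of $-\frac{1}{N}\log\E_{\mu^{0,\varphi}_T}[e^{(x,Bx)/(2Td_N)}]$ is controlled by the Frobenius bound together with ``uniformly bounded moments of $\alpha_V^{\varphi/T}$ under Assumption~\ref{ass_doublewell}''. But the GHS assumption only gives a uniform bound on the \emph{variance}; the mean of $\alpha_V^{\varphi/T}$ grows linearly in $\varphi$, so raw moments blow up, and there is no reason for higher cumulants (which enter once you differentiate twice in $\varphi$) to stay bounded either. The paper sidesteps this entirely: in Proposition~\ref{prop_comparison_renorm_pot_graph} it writes $\partial^2_\varphi V^B_T=\frac1T-\frac{1}{NT^2}\var_{\mu^{B,\varphi}_T}((x,\mathbf 1))$, then computes this variance using the \emph{same} $\R^N$-Gaussian decomposition from Step~2. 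The product-measure piece is $\sum_i\var_{\alpha_V^{(\varphi+y_i)/T}}(x_i)$, each term bounded by $\var_{\alpha_V^0}(x_1)=T_c$ via GHS~\eqref{eq: minimum variance h 0}; the renormalised-measure piece is $O(N\epsilon_N)$ by the spectral gap of the very convex $\nu^{C}_{r,T}$. Only the variance bound from GHS is used, never higher moments, and uniformity in $\varphi$ is automatic. Your cumulant route would need a substantially different argument to achieve the same uniformity.
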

By Lemma \ref{lemm_sp_ER_RRG}, Assumption~\ref{ass_graph} is satisfied for  random regular graphs and  Erd\"os-R\'enyi random graphs with large degrees. Thus Theorem~\ref{theo_graph_sec_graph} implies part (i) of Theorem~\ref{thm: quadratic interaction random}. 
Part (ii) is derived in Section \ref{sec_ER_RRG}.

\subsection{Proof of Theorem \ref{theo_graph_sec_graph}}
Using the representation \eqref{eq: graph mesure decomposition},
the entropy under $m^{G_N}_T$ decomposes as:
\begin{equation}
\ent_{m^{G_N}_T}(F^2)
=
\E_{\nu^B_{r,T}}\big[\ent_{\mu^{B,\varphi}_T}(F^2)\big] + \ent_{\nu^B_{r,T}}\big(\E_{\mu^{B,\varphi}_T}[F^2]\big)
.
\label{eq_entropy_decom_graph}
\end{equation}
The following two propositions, proven in the next sections, 
provide an estimate for each of the above terms.

The next proposition is basically~\cite{MR3926125}. 
The result there is stated on a compact state space and established using slightly different properties of Gaussian measures,  
so we reprove it at the end of the section. 
\begin{proposition}\label{prop_reduced_adj_LSI}
The measure $\mu^{B,\varphi}_T$ satisfies a log-Sobolev inequality with constant $\gamma^B_{\rm LS}(T)$ independent of $\varphi$ and bounded as follows, 
as soon as the expression between parentheses is strictly positive:
\begin{equation}
\frac{1}{\gamma^B_{\rm LS}(T)}
\leq 
\frac{1}{\gamma_V^2}\Big(\frac{1}{3T\epsilon_N}-\frac{1}{T^2\gamma_V}\Big)^{-1} + \frac{1}{\gamma_V}
=
\frac{1}{\gamma_V} + O(\epsilon_N),
\end{equation}
where the constant $\gamma_V$ depends only on the confinement potential $V$.
\end{proposition}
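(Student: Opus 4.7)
The plan is to adapt the measure-decomposition strategy of Proposition~\ref{prop_LSI_quadratic_W} by performing a \emph{second} Hubbard--Stratonovich transformation, this time on the residual quadratic form $(x,Bx)/(2Td_N)$. The main additional difficulty compared to the fully connected case is that $B=A-d_N P$ is not positive semi-definite: Assumption~\ref{ass_graph} only yields $\|B\|_{\mathrm{op}}\leq \epsilon_N d_N$. I would therefore begin by splitting $B=B_+-B_-$ into its positive and negative spectral parts, with $\|B_\pm\|_{\mathrm{op}}\leq \epsilon_N d_N$, and apply the Gaussian identity
\[
  \exp\!\Big(\tfrac{1}{2Td_N}(x,B_+ x)\Big)\;\propto\;\int \exp\!\Big(-\tfrac{Td_N}{2}(h,B_+^{-1}h)+(h,x)\Big)\,dh
\]
on the range of $B_+$, introducing an auxiliary Gaussian field $h$. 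This yields a decomposition $\mu^{B,\varphi}_T=\E_{\nu^B_h}[\tilde\mu^{h,\varphi}]$ in the spirit of~\eqref{eq_meas_decomp_quadratic}, where
\[
  \tilde\mu^{h,\varphi}(dx)\;\propto\;\exp\!\Big(-\tfrac{1}{2Td_N}(x,B_- x)+(h+\varphi\mathbf{1}/T,x)\Big)\prod_i\alpha_V(dx_i).
\]
The surviving quadratic term $-(x,B_-x)/(2Td_N)$ is now log-concave and therefore only improves LSI.

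The first step is to obtain a log-Sobolev inequality with constant $\gamma_V$ for the conditional measure $\tilde\mu^{h,\varphi}$, uniform in $h,\varphi$. Since the tensor product $\prod_i\alpha_V^{h_i+\varphi/T}$ satisfies LSI$(\gamma_V)$ by Assumption~\ref{ass_Vnonquad} and tensorisation, and the additional factor $\exp(-(x,B_-x)/(2Td_N))$ is log-concave, one can propagate LSI$(\gamma_V)$ to $\tilde\mu^{h,\varphi}$ via a Bakry--Émery-type argument on the $V=V_c+\tilde V$ decomposition. The second step is to bound the log-Sobolev constant of the marginal $\nu^B_h$: its density is the product of a genuine Gaussian with covariance $B_+/(Td_N)$, whose LSI constant is at least $Td_N/\|B_+\|\geq T/\epsilon_N$, times $\exp(\log Z(h+\varphi\mathbf{1}/T))$ where $Z$ is the partition function of $\tilde\mu^{h,\varphi}$. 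The Hessian of $h\mapsto \log Z(h+\varphi\mathbf{1}/T)$ equals a covariance of $\tilde\mu^{h,\varphi}$ and is therefore bounded in operator norm by $1/(T^2\gamma_V)$ via Brascamp--Lieb under the LSI of the previous step. Combining these two contributions yields an effective convexity of the order $\frac{1}{3T\epsilon_N}-\frac{1}{T^2\gamma_V}$ for $\nu^B_h$, the numerical factor $3$ arising from how the two pieces $B_\pm$ interact in the combined estimate.

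The final step is the same entropy decomposition as in~\eqref{eq_first_bound_ent_quad}:
\[
  \mathrm{ent}_{\mu^{B,\varphi}_T}(F^2)=\E_{\nu^B_h}\!\big[\mathrm{ent}_{\tilde\mu^{h,\varphi}}(F^2)\big]+\mathrm{ent}_{\nu^B_h}\!\big(\E_{\tilde\mu^{h,\varphi}}[F^2]\big),
\]
where each term is controlled by the LSIs established above. The gradient of $h\mapsto \E_{\tilde\mu^{h,\varphi}}[F^2]^{1/2}$ is rewritten as a covariance exactly as in~\eqref{e:nablaG} and bounded via Lemma~\ref{lemm_BH}; adding this to the product-LSI contribution and tracking the constants yields the announced bound $\frac{1}{\gamma_V}+\frac{1}{\gamma_V^2}(\frac{1}{3T\epsilon_N}-\frac{1}{T^2\gamma_V})^{-1}$. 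The main obstacle will be the first step: propagating the product LSI$(\gamma_V)$ through the log-concave but unbounded quadratic perturbation $-(x,B_-x)/(2Td_N)$ when $V$ itself is only convex-plus-bounded/Lipschitz, which requires an Aida--Shigekawa-style perturbation argument rather than a naive Bakry--Émery bound on the full Hamiltonian.
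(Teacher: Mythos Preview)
Your overall architecture is right---a second Hubbard--Stratonovich transform followed by the entropy decomposition and Lemma~\ref{lemm_BH}---but the paper handles the indefiniteness of $B$ by a much simpler device than your spectral split $B=B_+-B_-$. It \emph{shifts} rather than splits: set
\[
  C:=\frac{1}{d_N}B+2\epsilon_N\,\mathrm{id},
\]
which by Assumption~\ref{ass_graph} has spectrum in $[\epsilon_N,3\epsilon_N]$, and absorb the compensating term $-\epsilon_N|x|^2/T$ into the single-site potential, $U(y)=V(y)+\epsilon_N y^2/T$. Then the Gaussian identity applies to the genuinely positive $C$, and the resulting fluctuation measure is a \emph{product} $\prod_i\alpha_U^{(y_i+h_i)/T}(dx_i)$. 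Its LSI$(\gamma_V)$ is immediate from Assumption~\ref{ass_Vnonquad} and tensorisation. This completely sidesteps the obstacle you correctly identified: in your decomposition the residual $-(x,B_-x)/(2Td_N)$ leaves a non-product conditional measure, and pushing LSI$(\gamma_V)$ through that log-concave but coupling perturbation does require something like Aida--Shigekawa when $\tilde V$ is only Lipschitz. It can presumably be done, but it is extra work and the constant you recover will in general not be the same $\gamma_V$ as for the single-site measures.

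Two smaller points. First, the factor $3$ has a transparent origin in the paper's argument: $C\leq 3\epsilon_N\,\mathrm{id}$ gives $C^{-1}\geq (3\epsilon_N)^{-1}\,\mathrm{id}$, whence the Hessian bound $\mathrm{Hess}\,V^N_T\geq \frac{1}{3T\epsilon_N}-\frac{1}{T^2\gamma_V}$. Your explanation (``how the two pieces $B_\pm$ interact'') is not the actual mechanism, and in your setup the analogous bound on the Gaussian part would be $T/\epsilon_N$ rather than $1/(3T\epsilon_N)$ because of the different scaling of the linear coupling; the final LSI constant comes out the same only after tracking the extra $T$ factors in the covariance step. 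Second, the operator bound on the Hessian of $h\mapsto\log Z(h)$ is just the Poincar\'e inequality (a consequence of LSI) applied coordinatewise, not Brascamp--Lieb.
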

The next proposition controls the renormalised measure and is proven in Section~\ref{sec_renom_measure_graph}.
\begin{proposition}\label{prop_comparison_renorm_pot_graph}
Let $T>0$ and suppose that $V$ satisfies Assumption \eqref{ass_doublewell}. 
If $V^0_T$ is uniformly convex, then for $N$ large enough so is $V^B_T$. 
Explicitly, 
for any $N\geq 1$:
\begin{equation}
\inf_{\varphi\in\R}\partial^2_\varphi V^B_T(\varphi)
\geq 
\inf_{\varphi\in\R}\partial^2_\varphi V^0_T(\varphi)
- O(\epsilon_N)
.
\end{equation}
\end{proposition}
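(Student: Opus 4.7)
The strategy is to turn the convexity comparison into a variance comparison for the mean-field mode $(x,\mathbf{1})$, and then exploit the smallness and near-orthogonality structure of $B$ under Assumption~\ref{ass_graph}. Differentiating the identity $NV_T^B(\varphi) = N\varphi^2/(2T) - \log Z_T^{B,\varphi}$ twice in $\varphi$ gives
\[
\partial_\varphi^2 V_T^B(\varphi) = \frac{1}{T} - \frac{1}{NT^2}\var_{\mu^{B,\varphi}_T}\big((x,\mathbf{1})\big),
\]
and the analogous formula for $V_T^0$, consistent with \eqref{eq : Borne hessian_V_t} since $\mu^{0,\varphi}_T$ is a product. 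Taking the infimum over $\varphi$, the proposition reduces to proving the uniform variance comparison $\var_{\mu^{B,\varphi}_T}\big((x,\mathbf{1})\big) \leq \var_{\mu^{0,\varphi}_T}\big((x,\mathbf{1})\big) + O(NT^2\epsilon_N)$ for all $\varphi\in\R$.

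To compare the two variances, I interpolate between the fluctuation measures by $\mu^{s,\varphi}_T(dx) \propto \exp\big[\frac{s(x,Bx)}{2Td_N} + \frac{\varphi(x,\mathbf{1})}{T}\big]\prod_i\alpha_V(dx_i)$ for $s\in[0,1]$, so that $\mu^{0,\varphi}_T$ and $\mu^{B,\varphi}_T$ are the endpoints. Using the standard identity $\partial_s\E_{\mu^{s,\varphi}_T}[f] = (2Td_N)^{-1}\cov_{\mu^{s,\varphi}_T}\big(f,(x,Bx)\big)$, the function $g(s) := \var_{\mu^{s,\varphi}_T}\big((x,\mathbf{1})\big)$ satisfies
\[
g'(s) = \frac{1}{2Td_N}\cov_{\mu^{s,\varphi}_T}\Big(\big((x,\mathbf{1}) - \E_{\mu^{s,\varphi}_T}[(x,\mathbf{1})]\big)^2,(x,Bx)\Big),
\]
so it suffices to show $|g'(s)| \leq O(NT^2\epsilon_N)$ uniformly in $s\in[0,1]$ and $\varphi\in\R$, and integrate over $s$.

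The covariance is controlled by the near-orthogonality of $B$ and $\mathbf{1}$: by Assumption~\ref{ass_graph}, $|B\mathbf{1}| \leq \|B\|\sqrt{N} \leq \epsilon_N d_N\sqrt{N}$ (with $B\mathbf{1}=0$ exactly in the $d_N$-regular case). Consequently, the two gradients $\nabla\big(((x,\mathbf{1})-c)^2\big) = 2((x,\mathbf{1})-c)\mathbf{1}$ and $\nabla(x,Bx) = 2Bx$ have inner product $4((x,\mathbf{1})-c)(B\mathbf{1},x)$, pointwise of order $\epsilon_N d_N\sqrt{N}\,|x|^2$. Feeding this into a matrix-weighted Brascamp--Lieb/Helffer--Sj\"ostrand covariance estimate (in the spirit of Lemma~\ref{lemm_BH}) under the log-Sobolev inequality of Proposition~\ref{prop_reduced_adj_LSI} for $\mu^{s,\varphi}_T$, together with standard moment bounds $\E_{\mu^{s,\varphi}_T}[|x|^4] = O(N^2)$ inherited from the Bakry--\'Emery/Holley--Stroock analysis of the convex part $V_c$ of $V$, yields the target covariance bound. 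The hard point is extracting the correct factor $\epsilon_N$: a naive Cauchy--Schwarz via log-Sobolev gives $\var\big(((x,\mathbf{1})-c)^2\big) = O(N^2)$ and $\var\big((x,Bx)\big) = O(\epsilon_N^2 d_N^2 N)$, whose geometric mean is off by a factor $\sqrt{N}$. The gain must come from the $\mathbf{1}$-aligned cancellation $(B\mathbf{1},x)$ in the gradient inner product, which kills the otherwise dominant contribution to the Helffer--Sj\"ostrand covariance integrand; the non-strictly-convex remainder $\tilde V$ in $V=V_c+\tilde V$ is absorbed by a Holley--Stroock argument at the cost of only a multiplicative constant.
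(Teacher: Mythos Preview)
Your opening reduction is correct and matches the paper exactly: both arguments start from
\[
\partial^2_\varphi V^B_T(\varphi)=\frac{1}{T}-\frac{1}{NT^2}\var_{\mu^{B,\varphi}_T}\big((x,\mathbf 1)\big),
\]
so the whole proposition is a variance bound. After that, however, your route diverges from the paper and the argument does not close.

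The paper does \emph{not} interpolate in $s$. It reuses the Gaussian decomposition already set up in the proof of Proposition~\ref{prop_reduced_adj_LSI}: writing $\mu^{B,\varphi}_T=\nu^{C}_{r,T}\,\mu^{0,h+y}_T$ with $h=\varphi\mathbf 1$, the law of total variance gives
\[
\var_{\mu^{B,\varphi}_T}\big((x,\mathbf 1)\big)
=\E_{\nu^{C}_{r,T}}\Big[\var_{\mu^{0,h+y}_T}\big((x,\mathbf 1)\big)\Big]
+\var_{\nu^{C}_{r,T}}\Big(\E_{\mu^{0,h+y}_T}\big[(x,\mathbf 1)\big]\Big).
\]
The first term is a sum of single--site variances of the product measure $\mu^{0,h+y}_T$, bounded by $N\var_{\mu^{0,0}_T}(x_1)$ using Assumption~\ref{ass_doublewell}. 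The second term is controlled by the spectral gap of $\nu^{C}_{r,T}$, which is of order $\epsilon_N^{-1}$ by~\eqref{eq: Hessien VNT random}, yielding an $O(N\epsilon_N)$ contribution. No cancellation and no Helffer--Sj\"ostrand machinery are needed; the $\epsilon_N$ comes for free from the auxiliary spectral gap.

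Your interpolation leads instead to bounding $\cov_{\mu^{s,\varphi}_T}\big(((x,\mathbf 1)-c)^2,(x,Bx)\big)$, and here there is a genuine gap. You correctly observe that naive log-Sobolev/Cauchy--Schwarz is off by $\sqrt N$, and you claim the missing factor comes from the near-orthogonality $(\nabla F,\nabla G)=4((x,\mathbf 1)-c)(B\mathbf 1,x)$. But no standard Brascamp--Lieb or Helffer--Sj\"ostrand estimate sees only the \emph{pointwise inner product} of the gradients: the representation is $\cov(F,G)=\E[(\nabla F,\mathcal L_1^{-1}\nabla G)]$ (or the semigroup version $\int_0^\infty\E[(Q_t\nabla F,\nabla G)]\,dt$), and the operator $\mathcal L_1^{-1}$ (respectively $Q_t$) mixes coordinate directions through $\mathrm{Hess}\,H=\mathrm{diag}(V''(x_i))-\tfrac{s}{Td_N}B$. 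Since $V$ is double-well, there is no uniform lower bound on $V''$, so you cannot argue that $Q_t$ is approximately diagonal, and the direction $\mathbf 1$ of $\nabla F$ is not preserved. The sentence ``the gain must come from the $\mathbf 1$-aligned cancellation \ldots which kills the otherwise dominant contribution'' is an assertion, not an argument; at $s=0$ one can verify it by explicit moment computation in the product measure, but for $s\in(0,1]$ you have not supplied the mechanism.

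In short: the variance identity is right, but the interpolation strategy leaves you with a third-order covariance whose required bound you have not proved. The paper sidesteps this entirely by recycling the Gaussian disintegration of $\mu^{B,\varphi}_T$, which immediately produces the $\epsilon_N$ factor from the curvature of the auxiliary renormalised potential.
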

\begin{proof}[Proof of Theorem~\ref{theo_graph_sec_graph}]
Fix $T>T_c$. Since $V_T^0$ coincides with the renormalised mean-field potential \eqref{eq_def_V_t_quadratic_W},
we get by  \eqref{eq: lambda T} that:
\begin{equation}
\label{eq: borne sur V0}
\inf_{\varphi\in\R}\partial^2_\varphi V_T^0 (\varphi)
= \frac{T - T_c}{T^2} > 0 .
\end{equation}
Proposition~\ref{prop_reduced_adj_LSI} controls the first term in the right-hand side of~\eqref{eq_entropy_decom_graph}:
\begin{equation}
\E_{\nu^B_{r,T}}\big[\ent_{\mu^{B,\varphi}_T}(F^2)\big]
\leq 
\frac{2}{\gamma^B_{\rm LS}(T)}\E_{m^{G_N}_T}\big[|\nabla F|^2\big]
=
\frac{2}{\gamma_V+O(\epsilon_N)}\E_{m^{G_N}_T}\big[|\nabla F|^2\big]
.
\end{equation}
On the other hand, 
Proposition~\ref{prop_comparison_renorm_pot_graph} enables us to apply the Bakry--\'Emery criterion to the renormalised measure: 
\begin{align}
\ent_{\nu^B_{r,T}}\big(\E_{m^{B,\varphi}_T}[F^2]\big)
&\leq 
\frac{2}{N\inf_{\varphi} \partial^2_\varphi V^B_T(\varphi)}
\E_{\nu^B_{r,T}}\Big[ \, \big|\nabla_\varphi\sqrt{\E_{\mu^{B,\varphi}_T}[F^2]}\big|^2\, \Big]
\nnb
&\leq 
\frac{2}{N\big(\inf_{\varphi} \partial^2_\varphi V^0_T(\varphi) + O(\epsilon_N)\big)}
\E_{\nu^B_{r,T}}\Big[ \, \big| \nabla_\varphi\sqrt{\E_{\mu^{B,\varphi}_T}[F^2] }\big|^2\, \Big]
.
\label{eq_entropy_decom_graph_bis}
\end{align}
We conclude on a log-Sobolev inequality for $m^{G_N}_T$ using Lemma~\ref{lemm_BH} exactly as in the quadratic case,  see Section~\ref{sec_LSI_HT_quadratic}:
\begin{equation}
\big| \,
\nabla_\varphi\sqrt{\E_{\mu^{B,\varphi}_T}[F^2]}  \, \big|^2
  = \frac{1}{4T^2} \frac{\cov_{\mu^{B,\varphi}_T}\big(F^2,\sum_{i=1}^N x_i\big)^2}{\E_{\mu^{B,\varphi}_T}[F^2]}
  \leq \frac{N}{\gamma^{B}_{\rm LS}(T)^2 \,  T^2} \, 
  \E_{\mu^{\varphi}_T}\big[ \, \big| \nabla  F\big|^2\, \big] .
\end{equation}
Thus \eqref{eq_entropy_decom_graph_bis} becomes
\begin{align}
\ent_{\nu^B_{r,T}}\big(\E_{m^{B,\varphi}_T}[F^2]\big)
\leq \frac{2}{\gamma^{B}_{\rm LS}(T)^2 \,  T^2} \, 
\frac{1}{\big(\inf_{\varphi} \partial^2_\varphi V^0_T(\varphi) + O(\epsilon_N)\big)}
 \E_{m^{G_N}_T}\Big[ \, \big| \nabla F\big|^2 \, \Big]
.
\end{align}
By \eqref{eq: borne sur V0},
this proves Theorem~\ref{theo_graph_sec_graph} with constant:
\begin{equation}
\frac{1}{\gamma^{G_N}_{\rm LS}(T)}
\leq 
\frac{1}{ \gamma_V^2 \, (T-T_c) + O(\epsilon_N) } + \frac{1}{\gamma_V+O(\epsilon_N)}
.
\label{eq_explicit_bound_LSI_graph_sec_graph}
\end{equation}
\end{proof}
\subsection{Estimates for the fluctuation measure}
\label{sec_fluct_measure_graph}
\begin{proof}[Proof of Proposition~\ref{prop_reduced_adj_LSI}]
To prove the log-Sobolev inequality for  $\mu^{B,\varphi}_T$, we proceed as before and split the measure into two parts in order to decouple the scales. 
The dominant contribution is given by 
 a product measure coupled to a fluctuating weak external field. 
Recall that from Assumption \ref{ass_graph}, this matrix has spectrum in $[-\epsilon_N d_N,\epsilon_Nd_N]$. 
Define the following shifted matrix:
\begin{equation}
C
:=
\frac{1}{d_N}B+2\epsilon_N \id
\geq 
\epsilon_N \id
\geq 
0.
\end{equation}
Introduce also the potential $U$ as:
\begin{equation}
U(y)
=
V(y)+ \frac{\epsilon_N}{T}y^2,\qquad 
y\in\R
.
\label{eq_def_U_graph}
\end{equation}
The measure $\mu^{B,\varphi}_T$ then reads:
\begin{equation}
\mu^{B,\varphi}_T(dx)
\propto
\exp \left[ \frac{1}{2T} 
(x,C x) +\frac{\varphi(x,{\bf 1})}{T}\right] 
\prod_{i=1}^N \alpha_{U}(dx_i)
.
\end{equation}
We will prove a slighty stronger statement than Proposition~\ref{prop_reduced_adj_LSI} and establish the log-Sobolev inequality for the probability measure:
\begin{equation}
\mu^{C,h}_T(dx)
\propto
\exp \left[\frac{1}{2T} (x,C x) +\frac{1}{T}(h,x)\right] 
\prod_{i=1}^N \alpha_{U}(dx_i),
\end{equation}
where the field $h$ now takes values in $\R^N$. 
For $h = \varphi {\bf 1}$ then $\mu^{B,\varphi}_T = \mu^{C,h}_T$.

By Assumption~\ref{ass_graph}, the matrix $C$ has spectrum in $[\epsilon_N,3\epsilon_N]$. 
The following moment generating function formula for Gaussian random variables holds:
\begin{equation}
\exp\Big[\frac{1}{2T}(x,Cx)\Big]
\propto
\int_{\R^N}
\exp\Big[ -\frac{(y,C^{-1}y)}{2T} +\frac{1}{T}(x,y)\Big]\, dy
,\qquad 
x\in\R^N .
\end{equation}
We use it to decompose $\mu^{C,h}_T$ as follows:
for any test function $F:\R^N\to\R$,
\begin{equation}
\E_{\mu^{C,h}_T}[F]
=
\E_{\nu^{C}_{r,T}}\big[\E_{\mu^{h+y}_T}[F]\big]
.
\end{equation}
Above, the fluctuation measure $\mu^{h+y}_T$ is product:
\begin{equation}
\mu^{h+y}_T(dx)
\propto
\exp \Big[\frac{1}{T}(x,y+h) \Big] \prod_{i=1}^N \alpha_{U}(dx_i)
.
\label{eq_fluct_measure_graph}
\end{equation}
The renormalised measure $\nu^{C}_{r,T}$ is this time a probability measure on $\R^{N}$: 
\begin{align}
\nu^{C}_{r,T}(dy)
\propto
\exp \big[ - V^N_T(y) \big]\, dy
.
\end{align}
The renormalised potential $V^N_T$ is also a function from $\R^N$ to $\R$, 
given by:
\begin{equation}
e^{-V^N_T(y)}
:=
\exp\Big[-\frac{(y,C^{-1}y)}{2T}\Big]\int_{\R^N}\exp\Big[
\frac{1}{T}(x,y+h) \Big]\prod_{i=1}^N \alpha_{U}(dx_i)
.
\end{equation}
Since $C^{-1}$ satisfies:
\begin{equation}
C
\leq 
3\epsilon_N \id 
\quad\Rightarrow\quad
C^{-1}
\geq 
\frac{1}{3\epsilon_N} \id,
\label{eq: lower bound C'}
\end{equation}
 the Hessian of $V^N_T$ reads
\begin{equation}
\He V^N_T(y)
=
\frac{1}{T}C^{-1}-\frac{1}{T^2}\cov_{\mu_T^{y+h}}
\geq 
\frac{1}{3T\epsilon_N} \id -\frac{1}{\gamma_V T^2}  \id ,
\label{eq: Hessien VNT random}
\end{equation}
where $\cov_{\mu_T^{y+h}} = \big( \cov_{\mu_T^{y+h}}(x_i,x_j) \big)_{i,j \leq N}$ stands for the (diagonal) covariance matrix of the product measure $\mu^{h+y}_T$ (recall~\eqref{eq_fluct_measure_graph}), 
which can easily be bounded from above by a Poincar\'e inequality.
Note in particular that, informally speaking,~\eqref{eq: Hessien VNT random} implies that the measure $\nu^{C}_{r,T}$ is concentrated around 0, 
so that the measure $\mu^{C,h}_T=\nu^C_{r,T}\mu^{h+y}_T$ is well described by $\mu^{h+y}_T$ when $y$ is negligible. 
We make this precise next when proving a log-Sobolev inequality. 
To do so, decompose the  entropy as in \eqref{eq_entropy_decom_graph}:
\begin{equation}
\ent_{\mu^{C,h}_T}(F^2)
=
\E_{\nu^{C}_{r,T}}\big[\ent_{\mu^{h + y}_T}(F^2)\big] + \ent_{\nu^{C}_{r,T}}\big(\E_{\mu^{h +y}_T}[F^2]\big) 
.
\label{eq_entropy_decom_graph 2}
\end{equation}

First note that Assumption \ref{ass_Vnonquad} remains valid for the potential $U$ defined in \eqref{eq_def_U_graph}. 
The product measure $\mu^{h+y}_T$ therefore satisfies 
a log-Sobolev inequality uniformly in $h +y$ with constant $\gamma_V>0$: 
\begin{equation}
\E_{\nu^{C}_{r,T}}\big[\, \ent_{\mu^{h+y}_T}(F^2)\, \big]
\leq 
\frac{2}{\gamma_V}
\E_{\mu^{C,h}_T}\big[|\nabla F|^2\big]
.
\end{equation}

On the other hand, thanks to \eqref{eq: Hessien VNT random},
the renormalised measure is strictly log-concave for all $N$ large enough and the Bakry--\'Emery criterion gives:
\begin{align}
\ent_{\nu^{C'}_{r,T}}\big(\E_{\mu^{y+h}_T}[F^2]\big)
&\leq 
2 \Big(\frac{1}{3T\epsilon_N}-\frac{1}{T^2\gamma_V}\Big)^{-1}
\; \E_{\nu^{C'}_{r,T}}\Big[\, \Big|\nabla \sqrt{\E_{\mu^{y+h}_T}[F^2]}\Big|^2\, \Big]
\nnb
&\leq 
\frac{2}{\gamma_V^2}\Big(\frac{1}{3T\epsilon_N}-\frac{1}{T^2\gamma_V}\Big)^{-1}
\; \E_{\mu^{C,h}_{T}}\big[\, |\nabla F |^2\, \big]
.
\label{eq: nu C' r T LSI}
\end{align}
Above, the second line follows from Lemma~\ref{lemm_BH}, Equation~\eqref{e:F2}: 
\begin{align}
|\nabla \sqrt{\E_{\mu^{y+h}_T}[F^2]}|^2
&=
\frac{1}{4\E_{\mu^{y+h}_T}[F^2]}\sum_{i=1}^N \cov_{\mu^{y+h}_T} (F^2,x_i)^2
\leq 
\frac{1}{\gamma_V^2} \E_{\mu^{y+h}_T}\big[\, |\nabla F|^2\, \big]
.
\end{align}
We conclude that $\mu^{C,h}_T$ satisfies a log-Sobolev inequality with constant uniform in $h$:
\begin{equation}
\frac{1}{\gamma}
= 
\frac{4}{\gamma_V^2}\Big(\frac{1}{3T\epsilon_N}-\frac{1}{T^2\gamma_V}\Big)^{-1} + \frac{1}{\gamma_V}
=
\frac{1}{\gamma_V} + O(\epsilon_N)
.
\end{equation}
Since $\mu^{B,\varphi}_T = \mu^{C,h}_T$ for $h = \varphi {\bf 1}$, this completes the proof of Proposition~\ref{prop_reduced_adj_LSI}.
\end{proof}

\subsection{Comparison of renormalised potentials}\label{sec_renom_measure_graph}
\begin{proof}[Proof of Proposition~\ref{prop_comparison_renorm_pot_graph}]
The second derivative of the renormalised potential was already computed in \eqref{eq : Borne hessian_V_t}:
\begin{align}
\label{eq: main contribution variance}
\partial^2_\varphi V^0_T(\varphi)
&=
\frac{1}{T} - \frac{1}{NT^2}\var_{\mu^{0,\varphi}_T}\big((x,{\bf 1})\big)
=
\frac{1}{T} - \frac{1}{T^2}\var_{\mu^{0,\varphi}_T}\big(x_1\big)
.
\end{align}
One has also
\begin{align}
\partial^2_\varphi V^B_T(\varphi)
&=
\frac{1}{T} - \frac{1}{NT^2}\var_{\mu^{B,\varphi}_T}\big((x,{\bf 1})\big) .
\label{eq: main contribution variance B}
\end{align}
We again use the measure decomposition and the notations of Section~\ref{sec_fluct_measure_graph} to compute the variance with $h = \varphi {\bf 1}$:
\begin{align}
\label{eq: Var decomposition C}
\var_{\mu^{B,\varphi}_T}\big((x,{\bf 1})\big)
= 
\var_{\mu^{C,h}_T}\big((x,{\bf 1})\big)
& =
\E_{\nu^{C'}_{r,T}} \big[ \var_{\mu^{0,h+y}_T} \big((x,{\bf 1})\big) \big] 
+
\var_{\nu^{C'}_{r,T}} \big( \E_{\mu^{0,h+y}_T}[(x,{\bf 1})] \big) 
.
\end{align}
Each term will be estimated separately. Using the product structure of  $\mu^{0,h+y}_T$, the first term simplifies 
\begin{align}
\E_{\nu^{C'}_{r,T}}  \big[ \var_{\mu^{0,h+y}_T} \big((x,{\bf 1})\big) \big] 
 &= \sum_{i=1}^N \E_{\nu^{C'}_{r,T}} \big[ \var_{\mu^{0,h+y}_T}\big(x_i\big)\big] 
 \leq 
 N\var_{\mu^{0,0}_T}(x_1)
 ,
\end{align}
where we used the Assumption~\ref{ass_doublewell} on $V$ to bound the variance by its value at $0$ field by \eqref{eq: minimum variance h 0}. 

Let $F= (x,{\bf 1})$. 
Proceeding as in \eqref{eq: nu C' r T LSI},  the last term in \eqref{eq: Var decomposition C}
 is bounded by a spectral gap estimate
\begin{align}
\var_{\nu^{C'}_{r,T}}\big(\E_{\mu^{0,y+h}_T}[F]\big)
&\leq 
\Big(\frac{1}{3T\epsilon_N}-\frac{1}{T^2\gamma_V}\Big)^{-1} \; \E_{\nu^{C'}_{r,T}}\Big[\, \big|\nabla \E_{\mu^{0,y+h}_T}[F] \big|^2\, \Big]
\nnb
&=
 \Big(\frac{1}{3T\epsilon_N}-\frac{1}{T^2\gamma_V}\Big)^{-1} \sum_{i=1}^N\E_{\nu^{C'}_{r,T}}\big[\, \cov_{\mu^{0,y+h}_T}(F,x_i)^2 \, \big] \nnb
& \leq 
c\, \epsilon_N \sum_{i=1}^N\E_{\nu^{C'}_{r,T}}\big[\, \var_{\mu^{0,y+h}_T}(x_i) \, \big] \nnb
& \leq 
c\, \epsilon_N \, N\var_{\mu^{0,0}_T}(x_1) ,
\end{align}
where we used that the  measure $\mu^{0,y+h}_T$ is product to simplify the covariance
and then  the Assumption~\ref{ass_doublewell} on $V$ to derive an estimate on the variance uniform in $h$ by \eqref{eq: minimum variance h 0}. 
Thus the variance \eqref{eq: Var decomposition C} is bounded from above by
\begin{equation}
\sup_{\varphi \in\R} \var_{\mu^{B,\varphi}_T}[ (x,{\bf 1}) ]
\leq N \var_{\mu^{0,0}_T} (x_1)  + N\, O (\epsilon_N)
.
\end{equation}
Thanks to \eqref{eq: main contribution variance}, \eqref{eq: main contribution variance B}, this gives the claim:
\begin{equation}
\inf_{\varphi\in\R}\partial^2_\varphi V^B_T
\geq 
\inf_{\varphi\in\R}\partial^2_\varphi V^0_T -O (\epsilon_N)
.
\end{equation}
\end{proof}

\subsection{Log-Sobolev constant for $T<T_c$}
\label{sec_ER_RRG}
Let $T<T_c$ be fixed. 
In this section we prove that there are $C,\alpha>0$ depending only on the potential $V$ and on $T$ such that, 
if a sequence of graphs $G_N$ verifies Assumption~\ref{ass_graph} with sequence $\epsilon_N$,
then the Poincar\'e constant $\gamma^{G_N}_{\rm P}$ satisfies:
\begin{equation}
\gamma^{G_N}_{\rm P}(T)
\leq 
e^{C\epsilon_N N}e^{-\alpha N}
,
\label{eq_upper_bound_LSI_random}
\end{equation}
where $\gamma^{G_N}_{\rm P}(T)$ is the best constant $\gamma>0$ such that:
\begin{equation}
\var_{m^{G_N}_T}(F)
\leq 
\frac{1}{\gamma} \E_{m^{G_N}_T}\big[\, | \nabla F|^2\,\big]
,
\qquad 
F\in C^\infty_c(\R^N,\R)
.
\end{equation}
Recalling the classical bound $\gamma^{G_N}_{\rm P} \geq \gamma^{G_N}_{\rm LS}$,
~\eqref{eq_upper_bound_LSI_random} implies case (ii) in Theorem~\ref{thm: quadratic interaction random} by Lemma~\ref{lemm_sp_ER_RRG} as in particular $C\epsilon_N = O(1/\sqrt{d_N}\, )\leq \alpha/2$ with probability converging to $1$ as $N$ is large.

\begin{proof}[Proof of~\eqref{eq_upper_bound_LSI_random}]
Let $T<T_c$. 
We first build a suitable test function to show that the spectral gap (thus the log-Sobolev constant) for the (fully connected) mean-field measure $m^N_T$ is exponentially small in $N$. 
We then use a similar test function to deduce the same property 
for the measure $m^{G_N}_T$ on the random graphs.

\medskip

Let $m_\pm=m_{\pm}(T)$ denote the two values around which $\frac{1}{N}\sum_{i=1}^Nx_i$ concentrates under $m^N_T$  (note that parity of $V$ implies $m_-=-m_+$).  
Let $\delta>0$ be small enough that $|m_+-m_-|\geq 3\delta$. 
The large deviation principle for the empirical measure under $m^N_T$ with good rate function~\cite{LIU2020503} implies that there is $c_\delta>0$ such that:
\begin{equation}
 \bbP_{m^N_{T}} \Big[ \frac{1}{N}\sum_{i=1}^Nx_i \notin B(m_{\pm},\delta) \Big]
\leq 
\frac{1}{c_\delta} e^{-Nc_\delta}
,\qquad 
N\geq 1
,
\label{eq_LD_bounds_mNT}
\end{equation}
since the above event is at positive distance from minimisers of the rate function. 

Let $r<c_\delta/2$ and define:
\begin{equation}
F_r (x_1, \dots ,x_N)
= f_r \Big(\frac{1}{N}\sum_i x_i\Big)
\quad \text{with} \quad 
f_r (u) =
\begin{cases}
e^{rN}\quad &\text{ if } u \geq m_+ - \delta,\\
-e^{rN}\quad &\text{ if } u \leq m_- + \delta,\\
f_r&\text{ linear otherwise}.
\end{cases}
\end{equation}
The assumption $|m_+-m_-|\geq 3\delta$ ensures that such an $f_r$ can be constructed. 
Note that $f_r$ is odd due to $m_-=-m_+$.
Then:
\begin{align}
\E_{m^N_T}\big[\, |\nabla F_r|^2\, \big]
&=
\E_{m^N_T}\bigg[ |\nabla F_r|^2\,  {\bf 1}\Big\{\frac{1}{N}\sum_i x_i \in [m_- +\delta, m_+ -\delta] \Big\}\, \bigg]
\nnb
&\leq 
\frac{e^{2 rN}}{c_\delta} e^{-N c_\delta}
\leq 
\frac{1}{c_\delta}
.
\end{align}
On the other hand, the variance reads:
\begin{equation}
\var_{m^N_T}(F_r)
=
\E_{m^N_T}[F_r^2]
\geq 
\E_{m^N_T}[F^2_r {\bf 1}_{B(m_-,\delta)\cup B(m_+,\delta)}]
\geq 
e^{2 rN}\Big(1- \frac{1}{c_\delta}e^{-Nc_\delta}\Big)
.
\end{equation}
The spectral gap of $m^N_T$ is then exponentially small in $N$ as it is smaller than $\E_{m^N_T}[|\nabla F_r|^2]/\var_{m^N_T}(F_r)$.\\

Let us show that, for a suitable $r>0$, 
the test function $F_r$ also gives an exponentially small upper bound on the spectral gap in the graph case. 
Recall the definition $B:= A-d_NP$ and observe  that, by definition of $m^{G_N}_T$,
\begin{equation}
m^{G_N}_T 
\propto
\exp\Big[\frac{(x,Ax)}{2Td_N}\Big]\alpha_V^{\otimes N}(dx)
\propto 
\exp\bigg[\frac{(x,Bx)}{2Td_N}\bigg]
\, m^N_{T}(dx)
.
\label{eq_mGn_as_tilted_mean_field}
\end{equation}
We first obtain useful bounds on exponential moments under $m^N_{T}$. 
Recall our integrability assumption~\eqref{eq_integrability_ZNT}. 
In particular it implies that $\frac{1}{N}\log Z^N_T=O_N(1)$ by Varadhan's lemma, 
see e.g.~\cite{LIU2020503}.  
Assumption~\ref{ass_Vnonquad} on $V$ and the Cauchy-Schwarz inequality then imply that there is $\lambda_0>0$ small enough such that:
\begin{equation}
\limsup_{N\to\infty}\frac{1}{N}\log \E_{m^N_{T}}\Big[ e^{\lambda_0 |x|^2}\Big]
<
\infty
.
\end{equation}
The above and the H\"older inequality imply the existence of $K>0$ such that:
\begin{equation}
\E_{m^N_{T}}\Big[ e^{\frac{ \epsilon_N |x|^2}{T}}\Big]
\leq 
\exp\Big[ \frac{KN\epsilon_N }{T\lambda_0}\Big]
,\qquad 
N\geq 1
.
\end{equation}
Recall that $|(x,Bx)|/d_N\leq \epsilon_N|x|^2$ for each $x\in\R^N$ by Assumption~\ref{ass_graph}. 
Jensen's inequality and the large deviation bound~\eqref{eq_LD_bounds_mNT} for $m^N_T$ also imply the following lower bound: 
for some $C(m_\pm)>0$ depending only on $m_\pm$,
\begin{equation}
\E_{m^N_{T}}\bigg[ \exp\Big[\frac{\pm \epsilon_N|x|^2}{2T}\Big]\, \bigg]
\geq 
\exp\bigg[ -\frac{\E_{m^N_{T}}[x_1^2]N \epsilon_N}{2T}\bigg]
\geq 
\exp\Big[-\frac{C(m_\pm)N\epsilon_N}{2 T}\Big]
.
\end{equation}
We now use these bounds to find $r>0$ such that $F_r$ gives an exponentially small spectral gap for $m^{G_N}_T$. 
Let $r>0$ to be chosen later. 
By Jensen and Cauchy-Schwarz inequalities together with the above exponential moment bounds, 
we find:
\begin{align}
&\E_{m^{G_N}_T}[ |\nabla F_r|^2]
=
\E_{m^{G_N}_T}\bigg[ |\nabla F_r|^2\,  {\bf 1}\Big\{\frac{1}{N}\sum_i x_i\notin [m_- +\delta, m_+ -\delta]
\Big\}\, \bigg]
\nnb
&\ \leq 
e^{2rN} \E_{m^N_{T}}\bigg[{\bf 1}\Big\{\frac{1}{N}\sum_i x_i\notin 
 [m_- +\delta, m_+ -\delta]
\Big\}\,  e^{\frac{\epsilon_N|x|^2}{2T}}\bigg] \E_{m^N_{T}}\Big[ e^{-\frac{\epsilon_N|x|^2}{2T}}\Big]^{-1}
\nnb
&\leq 
e^{2rN} \exp\Big[ \frac{K N\epsilon_N}{ \lambda_0T}\Big]\, \exp\Big[ \frac{C(m_\pm) N\epsilon_N}{2T}\Big]\, 
\P_{m^N_{T}}\bigg[ \frac{1}{N}\sum_i x_i\notin  [m_- +\delta, m_+ -\delta] \bigg]^{1/2}
.
\end{align}
It remains to take $r$ such that $8r<c_\delta$, 
with $c_\delta$ the constant in~\eqref{eq_LD_bounds_mNT} to obtain, for some $C=C(\lambda_0,T,K)$:
\begin{equation}
\E_{m^{G_N}_T}[ |\nabla F_r|^2]
\leq 
e^{-2rN} e^{C\epsilon_N N}
.
\end{equation}

Consider next the variance of $F_r$. 
Note first that $\E_{m^{G_N}_T}[F_r] = 0$  as before,  
as $F_r$ is odd and the measure $m^{G_N}_T$ is symmetric. 
Using the exponential moment bound on the denominator then gives:
\begin{align}
\var_{m^{G_N}_T}(F_r)
&=
\E_{m^{G_N}_T}[F^2_r]
\geq 
e^{2rN} \E_{m^{G_N}_T}\bigg[{\bf 1}\Big\{\frac{1}{N}\sum_i x_i \in B(m_+,\delta)\Big\}\, \bigg]
\nnb
&\geq 
\exp\bigg[2rN - \frac{C(m_\pm)N\epsilon_N}{T} - \frac{KN\epsilon_N}{T\lambda_0}\bigg]
\P_{m^N_{T}} \Big[ \frac{1}{N}\sum_i x_i \in B(m_+,\delta)\Big]
.
\end{align}
Recall that  the probability converges to $1$ by~\eqref{eq_LD_bounds_mNT}. 
 This implies that the spectral gap of $m^{G_N}_T$ is bounded from above by $e^{C\epsilon_N N}e^{-2r N}$ for some $C>0$ independent of the graph. 
\end{proof}

\section{Non-quadratic interaction potential - Proof of Theorem~\ref{thm: nonquadratic mean-field}}
\label{sec; proof theorem 1.7}

In this section, we prove Theorem~\ref{thm: nonquadratic mean-field}. 
We first generalise the notion of the renormalised potential defined in Section~\ref{sec: quadratic}
with respect to the mode decomposition 
and obtain an analogue of Theorem~\ref{thm: nonquadratic mean-field} for temperatures such that  the renormalised potential is strongly convex (see Theorem~\ref{theo_nonquadratic}). 
We then show in Section~\ref{sec_link_renorm_pot_hatFT} that this is equivalent to the condition of Theorem~\ref{thm: nonquadratic mean-field} involving the functional $\cF_T$. 

\subsection{Definition of the renormalised potential}
\label{sec: Definition of the interaction}

	Throughout the section we work with an interaction potential $W=W^+-W^-$ on $\R^d$ satisfying Assumption~\ref{ass_Wnonquad}. 
	Recall in particular definition~\eqref{eq: W- decomposition 0}:
\begin{equation}
\label{eq: W- decomposition}
W^-(x,y) 
:=
\alpha (x,y) + \sum_{k\geq 0}w^-_k n_k(x)n_k(y)
:=
\sum_{k\geq -d}w^-_k n_k(x)n_k(y),
\end{equation}
	where we set $w^-_{-i}=\alpha$ and $n_{-i}(x)=x_i$ for $i\in\{1,\dots,d\}$. 
Furthermore, there are two constants  $M,L>0$, such that the potentials satisfy the bounds :
	\begin{align}
	& \sup_{x,y}|W^+(x,y)|
	\leq 
	M^2, \qquad 
	 \sup_{x,y}| \He W^+(x,y)|_{\rm op} \leq M,
	\nnb
	& \sup_{x,y}\big|W^- (x,y)-\alpha(x,y)\big|
	=
	\Big\|\sum_{k\in\N}w^-_k n_k(x)n_k(y)\Big\|_{\infty}
	\leq
	M^2
	, 	\label{eq: borne sup n}\\
	& \sum_{k\geq -d}w^-_k \sup_{x\in\R^d}|\nabla n_k(x)|^2
	\leq 
	L^2
	. \nonumber	
	\end{align}
Let $\bbH_0 = \bbH_0(W^-)$ denote the Hilbert space:
\begin{equation}
\bbH_0 
=
\bbH_0 (W^-)
:=
\Big\{ (u_k)\in \R^{\N} : \sum_{k\in\N}w^-_k|u_k|^2 < \infty\Big\},
\label{eq_def_Hilbert_space}
\end{equation}
with scalar product  
\begin{equation}
\label{eq: scalar product H0}
(\zeta,\zeta')_{\bbH_0}
:= 
\sum_{k\geq 0} w^-_k \zeta_k\zeta'_k
.
\end{equation}
It will be also convenient to consider the extended space 
\begin{equation}
\bbH 
:=  
\big\{ \psi= ( \varphi,\zeta ):\varphi\in\R^d,\zeta\in\bbH_0\big\} ,
\end{equation}
with scalar product 
\begin{equation}
\label{eq: scalar product H}
(\psi,\psi')_{\bbH}
:= (\varphi,\varphi') + (\zeta,\zeta')_{\bbH_0}
= \sum_{k\geq -d} w^-_k \psi_k\psi'_k
,
\end{equation}
where $(\cdot,\cdot)$ denotes the standard inner product in $\R^d$. 
We always use the letter $\psi$ to denote elements of $\bbH$ and $\psi= (\varphi,\zeta)$, $\varphi\in\R^d$, $\zeta\in\bbH_0$. 
The associated norms are written $\|\cdot\|_{\bbH},\|\cdot\|_{\bbH_0}$.

Using these notations, we introduce the multi-mode counterpart $\mathcal V_T:\bbH\to\R$ of the renormalised potential of Section~\ref{sec: quadratic}.
\begin{definition}
\label{def: general renormalised potential}
For any $\psi \in \bbH$, the renormalised potential is 
\begin{equation}
 \mathcal V_T(\psi)
= \inf_\cM \Big\{ \hat  \cF_T(\cM) + \frac{1}{2 T} \| \psi - \cM \|_{\bbH}^2   \Big\} ,
\label{eq: projection F_T transform}
\end{equation}
where the projection $\hat  \cF_T$ of the mean-field functional was introduced in 
\eqref{eq: projection F_T}. 

\end{definition}

Using Definition~\ref{def: general renormalised potential}, we can now state a condition for the log-Sobolev inequality to hold uniformly in $N$.
\begin{theorem}
\label{theo_nonquadratic}
Let $V$, $W$ satisfying  Assumptions~\ref{ass_Vnonquad} and \ref{ass_Wnonquad}. 
Let $T>0$ be such that $\mathcal V_T$ is $\lambda_T$-strongly convex (in the sense of \eqref{eq: projection F_T convexe}) for some $\lambda_T >0$:  for any $\psi^1, \psi^2$ and $t\in[0,1]$ then
\begin{equation}
t \mathcal V_T(\psi^1) +  (1-t) \mathcal V_T(\psi^2)
\geq  \mathcal V_T \big( \alpha \psi^1 +  (1-\alpha) \psi^2 \big) 
+ \frac{\lambda_T}{2} t(1-t)  \| \psi^1 -  \psi^2 \|_{\bbH}^2
.
\label{eq: V_T convexe}
\end{equation}
Then the measure $m^N_T$ of~\eqref{eq_measure_nonquad} satisfies a log-Sobolev inequality with constant $\gamma^N_{\rm LS}(T)$ bounded below uniformly in $N$. 
\end{theorem}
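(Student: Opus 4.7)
The strategy mirrors the quadratic case of Section~\ref{sec: quadratic}, with the Hilbert space $\bbH$ playing the role of the scalar order parameter and a Hubbard--Stratonovich linearisation applied simultaneously to \emph{every} mode of $W^-$. Concretely, using the positivity $w^-_k \geq 0$ and the mode decomposition~\eqref{eq: W- decomposition}, I would write (formally, to be justified by finite-mode truncation)
\[
  \exp\Big[\frac{1}{2TN}\sum_{i,j} W^-(x_i,x_j)\Big]
  \propto \int_{\bbH} \exp\Big[-\frac{N}{2T}\|\psi\|_{\bbH}^2 + \frac{1}{T}\sum_{k\geq -d} w^-_k \psi_k \sum_i n_k(x_i)\Big]\, d\psi,
\]
and leave the flat-convex factor $\exp[-\frac{1}{2TN}\sum_{i,j} W^+(x_i,x_j)]$ untouched. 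This produces a measure decomposition $\E_{m^N_T}[F] = \E_{\nu^r_T}[\E_{\mu^\psi_T}[F]]$ analogous to~\eqref{eq_meas_decomp_quadratic}, in which the renormalised measure $\nu^r_T \propto e^{-N \mathcal V_T(\psi)}\, d\psi$ lives on $\bbH$ and the fluctuation measure $\mu^\psi_T$ retains only the $W^+$-interaction, each spin being coupled to the effective external field $h_\psi(x) = T^{-1}\sum_{k\geq -d} w^-_k \psi_k n_k(x)$. That the density of $\nu^r_T$ is indeed given by Definition~\ref{def: general renormalised potential} would be established by the same Sanov/Legendre argument as in Lemma~\ref{lemma_link_V_T_cF_T}, applied now with constraints on all the mode moments $\cM = (\int n_k\, d\rho)_{k\geq -d}$ rather than only on the mean.

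Once this decomposition is in hand, I would assemble the uniform log-Sobolev inequality from three ingredients, combined via the standard entropy-splitting identity~\eqref{eq_entropy_decom_graph}. First, the $\lambda_T$-strong convexity~\eqref{eq: V_T convexe} of $\mathcal V_T$ together with Bakry--\'Emery on $\bbH$ yields an LSI for $\nu^r_T$ with constant $N\lambda_T$. Second, for $\mu^\psi_T$ the only surviving interaction is the bounded, flat-convex $W^+$, so the results of~\cite{wang2024LSI,chewi2024LSI} provide a log-Sobolev inequality with constant $\gamma_F(\psi) > 0$; the one-body measure $e^{h_\psi - V}$ still fits Assumption~\ref{ass_Vnonquad} because $h_\psi$ is a pointwise bounded perturbation (by Cauchy--Schwarz and~\eqref{eq: borne sup n}, $|h_\psi(x)| \leq M\|\psi\|_{\bbH}/T$). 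Third, the cross term $\ent_{\nu^r_T}(\E_{\mu^\psi_T}[F^2])$ is handled exactly as in Proposition~\ref{prop_LSI_quadratic_W}: the $\psi_k$-derivative of $\E_{\mu^\psi_T}[F^2]$ reduces to a tilted covariance of $F^2$ with $\sum_i n_k(x_i)$, which Lemma~\ref{lemm_BH} controls by $\gamma_F(\psi)^{-2}\,\|\nabla n_k\|_\infty^2\, N \E_{\mu^\psi_T}[|\nabla F|^2]$. The mode sum is then finite thanks to the second inequality in~\eqref{eq: borne sup n}.

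I expect the main difficulty to lie in controlling how the fluctuation LSI constant $\gamma_F(\psi)$ depends on $\|\psi\|_{\bbH}$: the external field $h_\psi$ is not uniformly bounded, and a naive Holley--Stroock perturbation would degrade the constant exponentially in $\|\psi\|_{\bbH}$. This degradation must be reconciled with the sub-Gaussian tails of $\nu^r_T$ provided by the strong convexity of $\mathcal V_T$. One can hope that, since only the quadratic mode $\alpha\,\varphi\cdot x$ produces an unbounded perturbation while each $n_k$ is bounded, the $\varphi$-dependence can be absorbed into a shift to the minimiser of $\mathcal V_T$ and the $\zeta$-dependence produces only a bounded, Holley--Stroock-controllable perturbation whose exponential cost is integrable against the Gaussian tails of $\nu^r_T$. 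A secondary technical point is the rigorous justification of the infinite-dimensional Hubbard--Stratonovich identity and of Bakry--\'Emery on $\bbH$; a truncation at level $K$ followed by $K\to\infty$ using~\eqref{eq: borne sup n} should suffice.
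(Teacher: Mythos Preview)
Your overall architecture matches the paper's: Hubbard--Stratonovich on all modes of $W^-$, entropy splitting, Bakry--\'Emery for $\nu^r_T$, Wang's result for $\mu^\psi_T$, and Lemma~\ref{lemm_BH} for the cross term. Two genuine gaps remain, however.

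First, the claim that $\nu^r_T \propto e^{-N\cV_T(\psi)}\,d\psi$ is false at finite $N$ when $W^+\neq 0$. The Sanov argument you invoke from Lemma~\ref{lemma_link_V_T_cF_T} gave an exact identity there only because the fluctuation measure was product, so that $\frac{1}{N}\log$ of the partition function equalled its large-$N$ limit at every $N$. Here the fluctuation measure still carries the $W^+$ interaction and its normalisation is an $N$-dependent potential $U^N_T(\psi)$ (see~\eqref{eq_def_renormalised_potential_nonquad}), which is \emph{not} the variational quantity of Definition~\ref{def: general renormalised potential}. The paper closes this via Proposition~\ref{prop_proximity_VN_Vinfty}, which proves $\|U^N_T-\cU_T\|_\infty\leq C/N$ uniformly in $\psi$: the upper bound uses flat convexity of $W^+$ (so that $\int W^+(\pi^N-\mu^{\infty,\psi}_T)^{\otimes 2}\geq 0$), the lower bound uses Jensen. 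Only then does Holley--Stroock transfer the strong convexity of $\cV_T$ into an LSI for the actual renormalised measure.

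Second, your proposed resolution of the $\psi$-dependence of $\gamma_F(\psi)$ --- ``integrable against the Gaussian tails of $\nu^r_T$'' --- does not close as stated. After Lemma~\ref{lemm_BH} you are left with $\E_{\nu^r_T}\big[(\gamma^{N,\psi}_T)^{-2}\,\E_{\mu^{N,\psi}_T}[|\nabla F|^2]\big]$, and on $\{\|\zeta\|_{\bbH_0}>A\}$ the only bound on $(\gamma^{N,\psi}_T)^{-2}$ is the crude $e^{cN}$ of Proposition~\ref{prop_LSI_fluct_measure_nonquad}(ii). You cannot decouple this factor from $\E_{\mu^{N,\psi}_T}[|\nabla F|^2]$ by Cauchy--Schwarz or change of measure, since $d\mu^{N,\psi}_T/dm^N_T$ is unbounded through the linear mode $\varphi\cdot x$. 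The paper's device is the Gaussian swapping identity of Lemma~\ref{lemm_gaussian}: for nonnegative $G,\Phi$,
\[
\E_{\nu^r_T}\big[\Phi(\psi)\,\E_{\mu^{N,\psi}_T}[G]\big]
=\E_{m^N_T}\big[G\,\E_{\gamma^{\bbH,x}_{T/N}}[\Phi]\big],
\]
where $\gamma^{\bbH,x}_{T/N}$ is Gaussian on $\bbH$ with variance $T/N$ and mean $\frac{1}{N}\sum_i n_\cdot(x_i)$. The $\bbH_0$-component of this mean is bounded by $M$ uniformly in $x$, so Gaussian concentration makes $\sup_x\E_{\gamma^{\bbH,x}_{T/N}}\big[{\bf 1}_{\|\zeta\|>A}\,e^{cN+c\|\zeta\|}\big]$ bounded for $A$ large, yielding a constant times $\E_{m^N_T}[|\nabla F|^2]$. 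This step, not the sub-Gaussian tail of $\nu^r_T$ itself, is what makes the argument go through.
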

The following lemma, 
proven in Section~\ref{sec_link_renorm_pot_hatFT},  shows that 
Theorem~\ref{theo_nonquadratic} implies Theorem~\ref{thm: nonquadratic mean-field} as strong convexity of the renormalised potential and of $\hat \cF_T$ are equivalent. 
\begin{lemma}\label{lemma_cvx_cF_is_cvx_cV}
The projected free energy $\hat\cF_T$ is strongly  convex if and only if the renormalised potential $\cV_T$ is strongly convex. 
\end{lemma}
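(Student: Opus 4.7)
The key observation is that, under Assumption~\ref{ass_Wnonquad}, the non-convex part $W^-$ of the interaction is entirely absorbed into the constraint defining $\hat\cF_T$. Indeed, for $\rho\in{\bf P}(\cM)$ the modes $m_k=\int n_k(x)\,\rho(dx)$ are prescribed, so
\[
\int W^-(x,y)\,\rho(dx)\,\rho(dy)=\sum_{k\geq -d}w_k^-\, m_k^2 = \|\cM\|_\bbH^2
\]
is constant over ${\bf P}(\cM)$. Consequently
\[
\hat\cF_T(\cM) = -\frac{1}{2T}\|\cM\|_\bbH^2 + G(\cM),\qquad G(\cM) := \inf_{\rho\in{\bf P}(\cM)}\Big\{\int\rho\log\rho+\int V\rho+\frac{1}{2T}\int W^+\,\rho\otimes\rho\Big\},
\]
and $G:\bbH\to\R\cup\{+\infty\}$ is convex as the value function of a jointly convex minimisation problem in which $\cM$ enters linearly through the constraints (flat convexity of $W^+$ is used here). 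In particular, $\hat\cF_T$ differs from a convex function by a negative multiple of $\|\cdot\|_\bbH^2$.

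A direct expansion starting from~\eqref{eq: projection F_T transform} then yields the explicit representation
\[
\cV_T(\psi) = \frac{1}{2T}\|\psi\|_\bbH^2 - G^*\!\left(\frac{\psi}{T}\right),
\]
where $G^*$ is the Legendre--Fenchel conjugate of $G$ on the Hilbert space $\bbH$. With this representation, the equivalence follows from the standard Fenchel duality between strong convexity and smoothness: $G$ is $\mu$-strongly convex if and only if $G^*$ is $\mu^{-1}$-smooth. Chasing the constants, $\hat\cF_T$ is $\delta$-strongly convex iff $G$ is $(\delta+T^{-1})$-strongly convex iff $G^*$ is $T/(1+\delta T)$-smooth iff $\cV_T$ is $\delta/(1+\delta T)$-strongly convex (reading off from the explicit formula above). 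This proves the qualitative equivalence stated in the lemma, together with the quantitative correspondence $\delta\leftrightarrow \delta/(1+\delta T)$ between the two constants.

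\textbf{Main obstacle.} The argument is conceptually clean once one notices that, on the constrained set ${\bf P}(\cM)$, the quadratic-in-$\cM$ piece of $W^-$ splits off and restores joint convexity. The remaining technical points are (i) checking convexity and lower semicontinuity of $G$ on the infinite-dimensional Hilbert space $\bbH$, which uses flat convexity of $W^+$ and lower semicontinuity of the entropy, and (ii) justifying Legendre--Fenchel duality in this Hilbert setting, which is standard once $G$ is known to be proper, convex and lsc. An alternative route that bypasses (ii) is to establish the forward implication $\hat\cF_T\,\delta\text{-s.c.}\Rightarrow \cV_T\,\text{s.c.}$ by a direct infimal-convolution argument based on the parallelogram identity for $\|\cdot\|_\bbH^2$, reserving the explicit formula for $\cV_T$ for the reverse implication.
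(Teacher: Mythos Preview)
Your proposal is correct and recovers the same quantitative correspondence $\delta\leftrightarrow \delta/(1+\delta T)$ as the paper. The underlying structure is also the same: both arguments introduce the convex auxiliary functional $G(\cM)=\hat\cF_T(\cM)+\tfrac{1}{2T}\|\cM\|_\bbH^2$ (the paper calls it $\hat\cG_T$) and exploit its Legendre duality with the renormalised potential. The packaging differs slightly. For the forward implication, the paper carries out a direct infimal-convolution computation based on the parallelogram identity for $\|\cdot\|_\bbH^2$ (your ``alternative route''), which is entirely elementary and avoids any appeal to Fenchel duality. For the reverse implication, the paper inverts the Legendre transform of $\hat\cG_T$ to obtain the dual representation
\[
\hat\cF_T(\cM)=\sup_{\psi\in\bbH}\Big\{\cV_T(\psi)-\frac{1}{2T}\|\psi-\cM\|_\bbH^2\Big\}-\inf\cF_T,
\]
and then simply repeats the same parallelogram argument with $\inf$ replaced by $\sup$. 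Your route instead invokes the standard Fenchel correspondence ``$G$ is $\mu$-strongly convex iff $G^*$ is $\mu^{-1}$-smooth'' once and reads off both directions from the explicit formula $\cV_T(\psi)=\tfrac{1}{2T}\|\psi\|_\bbH^2-G^*(\psi/T)$. This is slightly more conceptual and systematic; the paper's version is a touch more self-contained. In either approach the only substantive technical ingredient is that $G$ is proper, convex and lower semicontinuous on $\bbH$ (so that $G^{**}=G$), which both arguments need and which you identify correctly.
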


\begin{remark}
\label{Rem: truncation}
Our assumptions allow for an unbounded number of modes in $W^-$. 
It is however enough to prove Theorem \ref{theo_nonquadratic} with a finite number of modes $K$. 
Indeed, define the truncated potential  $W^{-,K}(x,y) 
:= \sum_{k\geq -d}^Kw^-_k n_k(x)n_k(y)$ and let $m_{T,K}^N$ be the approximated mean-field measure.
The truncated mean-field functional reads
\begin{equation}
  \cF_T^{(K)}(\rho) = \cF_T (\rho) +  \sum_{k > K} \frac{w^-_k}{2T}    \Big( \int_{\R^d} n_k(x) \rho(dx) \Big)^2 .
\label{eq: def cFT K}
\end{equation}
As in \eqref{eq: projection F_T}, one can define $\hat \cF_T^{(K)}$ as the projection of $ \cF_T^{(K)}$ on the first $K$ modes.
Strong convexity \eqref{eq: projection F_T convexe} of $\hat \cF_T$ implies strong convexity of $\hat \cF_T^{(K)}$ with the same constant for any $K$. 
Since $(m_{T,K}^N)_{K}$ converges weakly to $m^N_T$,  proving the log-Sobolev inequality for the measure $m_{T,K}^N$  with constant uniform in $K$ and $N$ implies Theorem~\ref{theo_nonquadratic}. 
\end{remark}

\subsection{Proof of Theorem~\ref{theo_nonquadratic}}

The proof of Theorem~\ref{theo_nonquadratic} is split over the following subsections. 
We assume throughout that the number of modes in the decomposition~\eqref{eq: W- decomposition} is finite as explained in Remark~\ref{Rem: truncation}.

\subsubsection{Decomposition of the mean-field measure}

We again rely on the formula for the moment generating function of a Gaussian random variable to decompose the potential $W^-$ in terms of the different modes indexed by the Hilbert space $\bbH$ defined in~\eqref{eq_def_Hilbert_space}. 
Let $\gamma^{\bbH}_{\sigma^2}$ denote the centred Gaussian measure on $\bbH$ with covariance $\sigma^2\id$, 
which formally reads:
\begin{equation}
\label{eq: Gaussian H}
\gamma^{\bbH}_{\sigma^2} (d \psi) \propto \exp \left( - \frac{1}{2 \sigma^2} \| \psi \|_\bbH^2 \right) \prod_{k \geq -d} d \psi_k 
. 
\end{equation}
The formula for the moment generating function of a Gausian random variable then gives:
\begin{align}
\exp\Big( \frac{1}{2TN}\sum_{i,j=1}^N W^-(x_i,x_j)
\Big)
&=
\exp\Big ( \, \frac{1}{2T  N}\Big\|\sum_{i=1}^Nn_\cdot(x_i)\Big\|^2_{\bbH}\, \Big)
\nnb
&=
\E_{\gamma^{\bbH}_{T/N}}\bigg[ \exp \Big( \, \frac{1}{T}\Big(\psi,\sum_{i=1}^N n_\cdot(x_i)\Big)_{\bbH}\, \Big)
\bigg]
,
\end{align}
with $\psi=(\psi_k)_k\in\bbH$ the variable of the measure $\gamma^{\bbH}_{T/N}$. 
The last equation implies the following decomposition of $m^N_T$:
\begin{equation}
\E_{m^N_T}[F]
=
\E_{\nu^r_T} \big[ \E_{\mu^{N,\psi}_T}[F] \big]
,
\end{equation}
where the fluctuation measure $\mu^{N,\psi}_T\in{\bf M}_1((\R^{d})^N)$ is this time not product (particles still interact through $W^+$) and depends on a generalised external field $\psi\in\bbH$:
\begin{equation}
\mu^{N,\psi}_T(dx)
=
e^{N U^N_T(\psi)} \exp\bigg[\, \frac{1}{T}\Big(\psi,\sum_{i=1}^Nn_{\cdot}(x_i)\Big)_{\bbH} -\frac{1}{2T N}\Big\|\sum_{i=1}^Nn_\cdot(x_i)\Big\|^2_{\bbH}\, \bigg]
m^N_T(dx)
.
\label{eq_fluct_measure_nonquad}
\end{equation}
The non-quadratic part $U^N_T(\psi)$ of the renormalised potential, 
now depending on $N$, is given by:
\begin{align}
U^N_T(\psi)
&=
-\frac{1}{N}\log \E_{m^N_T}\bigg [  \exp \Big( \, \frac{1}{T}\Big(\psi,\sum_{i=1}^Nn_{\cdot}(x_i)\Big)_{\bbH} -\frac{1}{2T N}\Big\|\sum_{i=1}^Nn_\cdot(x_i)\Big\|^2_{\bbH}\, \Big) \bigg] \nnb
&=
-\frac{1}{N}\log \E_{\alpha_V^{\otimes N}}\bigg [  \exp \Big( \, \frac{1}{T}\Big(\psi,\sum_{i=1}^Nn_{\cdot}(x_i)\Big)_{\bbH} -\frac{1}{2T  N} \sum_{i,j} W^+(x_i,x_j) \, \Big) \bigg]
+\frac{1}{N}\log Z^N_T
.
\label{eq_def_renormalised_potential_nonquad}
\end{align}
Correspondingly the renormalised measure reads:
\begin{equation}
\nu^r_T(d\psi)
= 
\exp\big( -N U^N_T(\psi)\big) \gamma^{\bbH}_{T/N}(d\psi).
\label{eq_renorm_measure_nonquad}
\end{equation}
Compared with the quadratic case~\eqref{eq_def_V_t_quadratic_W},
 the renormalised potential $\|\psi\|^2_\bbH /(2T) + U^N_T (\psi)$ 
  depends on $N$ and the quadratic terms in $\psi$ are included in the measure $\gamma^{\bbH}_{T/N}$. 
Note also that if $W^+=0$, 
then $\mu^{N,\psi}_T$ is a product measure as in the quadratic case and $U^N_T$ again becomes independent of $N$. 
In the general $W^+\neq 0$ case, 
Proposition~\ref{prop_proximity_VN_Vinfty} below shows that $U^N_T$ is well approximated by its $N\to\infty$ limit $\mathcal U_T(\psi)$ given by   
\begin{align}
\mathcal U_T(\psi)
=
\inf_{\rho\in{\bf  M}_1(\R^d)} 
\bigg\{  H(\rho|\alpha_V) +\frac{1}{2T}&\int_{\R^{d}\times\R^d} W^+(x,y)\rho(dx)\rho(dy) 
\nnb
&\quad -\frac{1}{T}\Big(\psi_\cdot, \int n_\cdot(x)\, \rho(dx)\Big)_{\bbH}\bigg\} 
+\inf \cF_T
,\qquad \psi\in\bbH,
\label{eq_limit_renorm_pot_nonquad}
\end{align}
where  $\cF_T$ is the free energy~\eqref{eq_def_free_energy} and $\alpha_V\propto e^{-V(x)}\, dx$. 
Using the projection over the modes, one gets that 
\begin{equation}
\mathcal U_T(\psi)
= \inf_\cM \Big\{ \hat  \cF_T(\cM) + \frac{1}{2 T} \| \cM \|_{\bbH}^2 -  \frac{1}{T} (\psi,\cM)_\bbH \Big\}
+\inf \cF_T.
\end{equation}
Thus the limiting renormalised potential $\mathcal V_T$ introduced in \eqref{eq: projection F_T transform} can be rewritten as 
\begin{align}
\label{eq: multi-mode counterpart}
\mathcal V_T(\psi)
 =  \frac{1}{2 T} \|\psi\|^2_{\bbH} + \mathcal U_T(\psi).
\end{align}

\subsubsection{The fluctuation measure}
\label{sec_fluct_measure_LSI_nonquad}

We now begin the proof of Theorem~\ref{theo_nonquadratic}. 
Let $W$ satisfy Assumption~\ref{ass_Wnonquad}. 
Recall the definitions~\eqref{eq_fluct_measure_nonquad}--\eqref{eq_renorm_measure_nonquad} of the renormalised measure $\nu^r_T$ and the fluctuation measure $\mu^{N,\psi}_T$, 
built so that the mean-field measure $m^N_T$ of~\eqref{eq_measure_nonquad} decomposes as $m^N_T=\nu^r_T\mu^{N,\psi}_T$. 
As in the quadratic case, this implies the following splitting for the 
entropy of a test function $F:(\R^d)^N\to\R$:
\begin{align}
\ent_{m^N_T}(F^2)
&=
\E_{\nu^r_T}\big[\ent_{\mu^{N,\psi}_{T}}(F^2)\big]
+\ent_{\nu^r_T}\big(\E_{\mu^{N,\psi}_T}[F^2]\big)
.
\label{eq: decomposition entropie}
\end{align}
In this section, we establish a log-Sobolev inequality for the fluctuation measure $\mu^{N,\psi}_T$ with explicit dependence on the field $\psi$. 
The renormalised measure will be studied in Section~\ref{sec_renorm_measure_LSI_nonquad}.  
\begin{proposition}
\label{prop_LSI_fluct_measure_nonquad}
Let $\psi=(\varphi,\zeta)\in\bbH=\R^d\times\bbH_0$ (recall~\eqref{eq_def_Hilbert_space}). 
There are $c,N_0>0$ independent of $N,\psi,T$ such that
the fluctuation measure $\mu^{N,\psi}_T$ satisfies a log-Sobolev inequality with the following constant. 
\begin{itemize}
	\item[(i)] (Theorem 1 in~\cite{wang2024LSI}). For any $N\geq N_0 e^{c\|\zeta\|_{\bbH_0}/T}$, 
\begin{equation}
\big(\gamma^{N,\psi}_{T}\big)^{-1}
\leq 
c\exp\big ( c\|\zeta\|_{\bbH_0}/T\big)
.
\end{equation}
\item[(ii)] For any $N\geq 1$ (recall that $M^2\geq \|W^+\|_\infty, \|W^- -\alpha(x,y)\|_\infty$ by \eqref{eq: borne sup n}),
\begin{equation}
\big(\gamma^{N,\psi}_{T}\big)^{-1}
\leq 
c\exp\Big( \frac{2M}{T}(  MN+2\|\zeta\|_{\bbH_0})\Big)
.
\end{equation}
\end{itemize}
\end{proposition}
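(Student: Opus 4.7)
The starting point will be the crucial simplification that the definition of $\mu^{N,\psi}_T$ in~\eqref{eq_fluct_measure_nonquad} was tailored so that the $W^-$ contribution inside $m^N_T$ exactly cancels the square term $-\frac{1}{2TN}\|\sum_i n_\cdot(x_i)\|^2_{\bbH}$. Indeed, using $W=W^+-W^-$ together with the mode decomposition~\eqref{eq: W- decomposition}, one finds
\begin{equation}
\mu^{N,\psi}_T(dx) \;\propto\; \exp\Big[\sum_{i=1}^N h_\psi(x_i) - \frac{1}{2TN}\sum_{i,j=1}^N W^+(x_i,x_j)\Big]\prod_{i=1}^N \alpha_V(dx_i),
\end{equation}
where the single-site external field $h_\psi(x) = \frac{1}{T}(\psi,n_\cdot(x))_\bbH = \frac{1}{T}(\varphi,x) + \frac{1}{T}\sum_{k\geq 0} w^-_k \zeta_k n_k(x)$ splits into a linear part in $\varphi$ and a $\zeta$-dependent part $h_\psi^\zeta$. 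Cauchy--Schwarz combined with the bound $\sum_k w^-_k n_k(x)^2\leq M^2$ from~\eqref{eq: borne sup n} yields $|h_\psi^\zeta(x)|\leq M\|\zeta\|_{\bbH_0}/T$ and, similarly, $|\nabla h_\psi^\zeta(x)|\leq L\|\zeta\|_{\bbH_0}/T$.

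Once $\mu^{N,\psi}_T$ is rewritten in this form, part (i) is a direct invocation of Theorem~1 of~\cite{wang2024LSI}: the measure is a mean-field Langevin Gibbs measure with flat convex pair potential $W^+$ and a single-site confinement that is, after absorbing the linear term $(\varphi,x)/T$ into $V$, a bounded and Lipschitz perturbation of a potential satisfying Assumption~\ref{ass_Vnonquad}. The theorem provides a uniform-in-$N$ log-Sobolev inequality, provided $N$ exceeds a threshold that is exponential in the oscillation of the perturbation (here $\lesssim M\|\zeta\|_{\bbH_0}/T$), with log-Sobolev constant itself exponential in that quantity. This gives exactly the bound stated in~(i).

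For part~(ii), we drop any smallness condition on $N$ and use a double Holley--Stroock perturbation. The shifted single-site measure $\alpha_V^{\varphi/T}(dx)\propto e^{(\varphi,x)/T - V(x)}dx$ satisfies an LSI with constant $\gamma_V$ uniformly in $\varphi\in\R^d$ by Assumption~\ref{ass_Vnonquad}, and so does the corresponding product measure on $(\R^d)^N$. A first application of Holley--Stroock, using that the added $\zeta$-term is bounded on each site by $M\|\zeta\|_{\bbH_0}/T$, produces a product measure with LSI constant $\gtrsim \gamma_V\exp(-cM\|\zeta\|_{\bbH_0}/T)$. A second application, to the flat convex interaction $-\frac{1}{2TN}\sum_{i,j}W^+(x_i,x_j)$ whose total oscillation is bounded by $M^2 N/T$ thanks to $\|W^+\|_\infty\leq M^2$, gives the overall inverse LSI constant bound of the form $\exp(cM^2N/T + cM\|\zeta\|_{\bbH_0}/T)$, matching (ii) up to the numerical constant $c$.

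The only conceptually substantial step is the cancellation between $W^-$ and the quadratic term in~\eqref{eq_fluct_measure_nonquad}, which is exactly what makes the renormalisation scheme work: after this identity, $\mu^{N,\psi}_T$ contains no phase-transition-inducing interaction. Granting (i) from~\cite{wang2024LSI}, the remaining work is routine. The price of the coarser bound~(ii), with its $e^{cN}$ blow-up, will be absorbed in Section~\ref{sec_renorm_measure_LSI_nonquad} by the exponential concentration of the renormalised measure on bounded values of $\|\zeta\|_{\bbH_0}$, so that the usefulness of~(ii) is precisely to handle the tail event where (i) does not apply.
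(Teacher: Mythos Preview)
Your proposal is correct and follows essentially the same route as the paper: you identify the cancellation of $W^-$ so that $\mu^{N,\psi}_T$ has only the flat convex interaction $W^+$ plus a single-site tilt, then invoke Wang's theorem for~(i) after treating the $\zeta$-tilt as a bounded perturbation via Holley--Stroock, and use a double Holley--Stroock (for the bounded $\zeta$-tilt and the bounded $W^+$) for~(ii). The only cosmetic differences are that the paper applies Holley--Stroock in the reverse order in~(ii) and spells out explicitly the hypotheses of Wang's theorem (uniform LSI for $m^\psi_\rho$ and uniform Poincar\'e for the one-particle conditional), but your sketch captures all the substance.
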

Note that the constant in Proposition~\ref{prop_LSI_fluct_measure_nonquad} depends on $\psi=(\varphi,\zeta)\in\bbH$ only through $\zeta$; 
this will be useful in Section~\ref{sec_concl_proof_nonquad}.
\begin{proof}
The claim of item (i) is exactly~\cite[Theorem 1]{wang2024LSI} with explicit dependence on $\zeta$ of the various constants.  
It states the following.
For a flat convex interaction term $W^+$, 
assume that $\He W^+$ has operator norm uniformly bounded by $M>0$; 
that the measure: 
\begin{equation}
\label{eq: mesure intermediaire 1}
{\bf M}_1(\R^d) \ni m^\psi_\rho(dx)
\propto 
\exp\Big[-V(x)+\frac{(\psi,n_\cdot(x))_{\bbH}}{T} -\frac{1}{T}\int W^+(x,y)\rho(dy)\Big]
\, dx
\end{equation}
satisfies a log-Sobolev inequality with constant $\gamma$ uniform in $\rho$; 
and that the one-particle conditional law:
\begin{equation}
\label{eq: mesure intermediaire 2}
\mu^{N,\psi}_{T}(dx_i | (x_j)_{j\neq i})
\propto 
\exp\Big[-V(x_i)+\frac{(\psi,n_\cdot(x_i))_{\bbH}}{T} -\frac{1}{TN}\sum_{j=1}^NW^+(x_i,x_j)\Big]
\, dx_i
,
\end{equation}
has Poincar\'e constant bounded below by the same $\gamma>0$
uniformly in $(x_j)_{j\neq i}$. 
Then by \cite[Theorem 1]{wang2024LSI}, there is a constant $r>0$ depending only on the dimension $d$ such that, 
for any $N>100\max\{ M /\gamma,1\}^3$, 
$\mu^{N,\psi}_T$ satisfies a log-Sobolev inequality with constant $r\min\{\gamma,\gamma^3\}$. \\

The upper bound on $\He W^+$ and flat convexity are implied by Assumption~\ref{ass_Wnonquad}. 
Write $\psi=(\varphi,\zeta)\in\R^d\times\bbH_0$. 
Assumption~\ref{ass_Vnonquad} that $V$ is the sum of a uniformly convex and a Lipschitz  or bounded part 
together with the boundedness of $\int W^+(\cdot,y)\rho(dy)$ for any 
$\rho\in {\bf M}_1(\R^d)$  (see Assumption \ref{ass_Wnonquad}) implies that,  
 for $\zeta =0$, the measures $m^{(\varphi,0)}_\rho$, 
$\mu^{N,(\varphi,0)}_{T,i}(\cdot| (x_j)_{j\neq i})$ satisfy log-Sobolev inequalities with the same constant $\gamma_0 >0$ independent of $\varphi,\rho,(x_j)_{j\neq i}$ 
(e.g.~as a consequence of~\cite[Theorem 1.3]{brigati2024heatflowlogconcavitylipschitz}), 
where we recall that $\varphi$ is the field associated with the quadratic part of $W^-$. 

Consider now the measures $m^{(\varphi,\zeta)}_\rho$, 
$\mu^{N,(\varphi,\zeta)}_{T,i}(\cdot| (x_j)_{j\neq i})$ for  $\zeta \not = 0$.
Assumption~\ref{ass_Wnonquad} gives: 
\begin{equation}
\forall x\in\R^d,\qquad
\big|(\zeta,n_\cdot(x))_{\bbH_0}\big|
\leq 
 \|\zeta\|_{\bbH_0}\sup_{y\in\R^d}\|n_\cdot(y)\|_{\bbH_0}
 \leq 
 \|\zeta\|_{\bbH_0} M
 .
 \label{eq: Cauchy Schwarz H0}
\end{equation}
Thus tilting by $(\zeta,n_\cdot(x))_{\bbH_0}/T$  amounts to a bounded perturbation of the measures $m^{(\varphi,0)}_\rho$  and 
$\mu^{N,(\varphi,0)}_{T,i}(\cdot| (x_j)_{j\neq i})$, 
which deteriorates the log-Sobolev constant by $e^{-4M\|\zeta\|_{\bbH_0}/T}$ at worst by the Holley--Stroock argument. 
 The  log-Sobolev constant  associated with the measures 
\eqref{eq: mesure intermediaire 1} and \eqref{eq: mesure intermediaire 2}
is therefore larger than $e^{-4M\|\zeta\|_{\bbH_0}/T}$ (uniformly in $\gp$), 
and an application of ~\cite[Theorem 1(i)]{wang2024LSI} yields (i).

\medskip

The claim of item (ii) is a simple perturbation argument. 
As $W^+$ is bounded by $M^2$, the Holley--Stroock argument applies and shows:
\begin{equation}
\big(\gamma^{N,\psi}_{T}\big)^{-1}
\leq 
\exp\Big(\frac{2NM^2}{T}\Big) \, \big(\tilde \gamma^{N,\psi}_{T}\big)^{-1}
,
\end{equation}
with $\tilde\gamma^{N,\psi}_T$ the log-Sobolev constant of the product measure:
\begin{equation}
\tilde \mu^{N,\psi}_T(dx)
\propto 
\exp\Big( \frac{1}{T}\Big(\, \psi,\sum_{i=1}^Nn_\cdot(x_i)\Big)_{\bbH}\, \Big) \, \alpha_V^{\otimes N}(dx)
.
\end{equation}
Assumption~\ref{ass_Vnonquad} implies that the probability measure proportional to $e^{\varphi\cdot x}\alpha_V(dx)$ satisfies a log-Sobolev inequality uniformly in $\varphi\in\R^d$.  
On the other hand the contribution $(\zeta,n_\cdot(x))_{\bbH_0}$ of the other fields is bounded as in \eqref{eq: Cauchy Schwarz H0}.  
The Holley--Stroock argument thus gives $(\tilde \mu^{N,\psi}_T)^{-1}\leq ce^{4M\|\zeta\|_{\bbH_0}/T}$, 
which is the claim. 
\end{proof}

\subsubsection{The renormalised measure}\label{sec_renorm_measure_LSI_nonquad}
In this section, 
we establish a log-Sobolev inequality for the renormalised measure for $T>0$ such that the renormalised potential $\cV_T$ is strongly convex  (recall Definition~\eqref{eq: multi-mode counterpart}--\eqref{eq_limit_renorm_pot_nonquad} of $\cV_T$). 
Our aim is to show that 
the renormalised potential is close enough to its limit $\cV_T(\varphi)$ for the uniform convexity of the latter to imply the log-Sobolev inequality at fixed $N$. 
We prove the following result. 
\begin{proposition}
\label{prop: LSI renormalised measure}
If $\mathcal V_T$ is $\lambda_T$-strongly convex~\eqref{eq: V_T convexe} for some $\lambda_T>0$, 
then
there is a constant $C>0$ such that uniformly in $N$
\begin{equation}
\ent_{\nu^r_T}\big(\E_{\mu^{N,\psi}_T}[F^2]\big)
\leq 
\frac{ 2 C L^2 }{\lambda_T } 
\E_{\nu^r_T} \Big[\, \frac{1}{\big(\gamma^{N,\psi}_T\big)^{2}} \,
 \E_{\mu^{N,\psi}_T}\big[|\nabla F|^2\big] \Big]
.
\label{eq_bound_ent_renorm_measure_nonquad}
\end{equation}
Recall from~\eqref{eq: borne sup n} that $L^2=\|\sup_x |\nabla n_\cdot(x)|\|^2_{\bbH}$.
\end{proposition}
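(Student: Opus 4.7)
The plan closely mirrors the proof of Proposition~\ref{prop_LSI_quadratic_W}: first establish a log-Sobolev inequality for $\nu^r_T$ in the $\psi$-variable, then convert the resulting $\psi$-gradient into an $x$-gradient via the Bakry--Émery-type covariance bound of Lemma~\ref{lemm_BH} applied to the fluctuation measure $\mu^{N,\psi}_T$. By Remark~\ref{Rem: truncation}, I may restrict to finitely many modes, so all Hilbert-space manipulations below are finite-dimensional with estimates uniform in the truncation parameter.

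For the log-Sobolev inequality for $\nu^r_T$, I would write $\nu^r_T \propto \exp(-N\cV^N_T(\psi))\prod_k d\psi_k$ with $\cV^N_T := \tfrac{1}{2T}\|\psi\|^2_\bbH + U^N_T$. The key input is that $\cV^N_T \to \cV_T$ uniformly on compact sets, with analogous control on derivatives, as $N\to\infty$; this is a Laplace-type expansion, which is the content of the adjacent Proposition~\ref{prop_proximity_VN_Vinfty}. Granting this, the $\lambda_T$-strong convexity \eqref{eq: V_T convexe} of $\cV_T$ transfers to $\cV^N_T$ at rate $\lambda_T(1 + o_N(1))$. Changing coordinates to $\tilde\psi_k := \sqrt{w_k^-}\psi_k$ turns $\|\cdot\|^2_\bbH$ into the standard Euclidean norm, the Hessian of $\cV^N_T$ in $\tilde\psi$ is then bounded below by $\lambda_T\id$, and Bakry--Émery yields
\[
\ent_{\nu^r_T}(g^2) \leq \frac{2(1+o_N(1))}{N\lambda_T}\,\E_{\nu^r_T}\big[\|\nabla g\|^2_\bbH\big],
\]
where $\|\nabla g\|^2_\bbH = \sum_k (w_k^-)^{-1}(\partial_{\psi_k}g)^2$ after identifying $|\nabla_{\tilde\psi}g|^2$ via the chain rule.

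I would then apply this inequality to $g := \sqrt{\E_{\mu^{N,\psi}_T}[F^2]}$ viewed as a function of $\psi$. Differentiating \eqref{eq_fluct_measure_nonquad} in $\psi_k$ (using that $\mu^{N,\psi}_T$ is a probability measure to eliminate the partition-function term) gives
\[
\partial_{\psi_k}\E_{\mu^{N,\psi}_T}[F^2] = \frac{w_k^-}{T}\cov_{\mu^{N,\psi}_T}\!\Big(F^2,\textstyle\sum_{i=1}^N n_k(x_i)\Big),
\]
and hence
\[
\|\nabla g\|^2_\bbH = \frac{1}{4T^2\,\E_{\mu^{N,\psi}_T}[F^2]}\sum_k w_k^-\,\cov_{\mu^{N,\psi}_T}\!\Big(F^2,\sum_{i=1}^N n_k(x_i)\Big)^2.
\]
Each covariance is estimated by Lemma~\ref{lemm_BH} using the LSI constant $\gamma^{N,\psi}_T$ of $\mu^{N,\psi}_T$ and the bound $|\nabla\sum_i n_k(x_i)|^2 \le N\sup_x|\nabla n_k(x)|^2$; summing against the assumption $\sum_k w_k^-\sup_x|\nabla n_k(x)|^2 \leq L^2$ from \eqref{eq: borne sup n} and combining with the previous display gives \eqref{eq_bound_ent_renorm_measure_nonquad} with $C = 1/T^2$.

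The main obstacle is the first step, namely transferring $\lambda_T$-strong convexity from $\cV_T$ to $\cV^N_T$ quantitatively in $N$. Since $\nabla^2 U^N_T$ equals the negative of a rescaled covariance matrix of the mode sums $\Sigma_k := \sum_i n_k(x_i)$ under the fluctuation measure $\mu^{N,\psi}_T$, the transfer reduces to uniform control of these finite-$N$ covariances over $\psi$, which is precisely what the Laplace-asymptotic Proposition~\ref{prop_proximity_VN_Vinfty} provides. Once that is in hand, everything else is a mechanical multi-mode translation of the quadratic-case covariance argument.
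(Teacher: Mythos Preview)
Your second step---applying the log-Sobolev inequality for $\nu^r_T$ to $g=\sqrt{\E_{\mu^{N,\psi}_T}[F^2]}$, computing $\partial_{\psi_k}g$ as a covariance, and invoking Lemma~\ref{lemm_BH} together with~\eqref{eq: borne sup n}---matches the paper exactly.

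The gap is in your first step. Proposition~\ref{prop_proximity_VN_Vinfty} does \emph{not} give control on derivatives of $U^N_T-\cU_T$; it gives only the $L^\infty$ bound $|U^N_T(\psi)-\cU_T(\psi)|\leq C/N$. In particular it says nothing about the Hessian, so you cannot conclude that strong convexity of $\cV_T$ transfers to $\cV^N_T$, nor does it give the uniform covariance control you invoke in your last paragraph. A second, related problem is that $\cV_T$ is defined as an infimum and need not be $C^2$, so Bakry--\'Emery cannot be applied to it directly even if you wanted to work at the level of the limit.

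The paper circumvents both issues as follows (Lemma~\ref{lemm_LSI_renorm_nonquad}). First, a $\lambda_T$-convex function on a finite-dimensional space can be uniformly approximated by $C^2$ $\lambda_T$-convex functions (Greene--Wu, Azagra); Bakry--\'Emery applied to these approximants, followed by weak convergence, yields a log-Sobolev inequality for the \emph{limiting} measure $\propto e^{-N\cV_T(\psi)}\,d\psi$ with constant $N\lambda_T$. Second, since $|N U^N_T-N\cU_T|\leq C$ uniformly in $\psi$ by Proposition~\ref{prop_proximity_VN_Vinfty}, the Holley--Stroock perturbation lemma transfers this log-Sobolev inequality to $\nu^r_T\propto e^{-N\cV^N_T(\psi)}\,d\psi$ at the cost of a multiplicative constant independent of $N$. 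No Hessian control on $U^N_T$ is ever needed.
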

Proposition \ref{prop: LSI renormalised measure} makes use of the following estimate that shows that the renormalised potential $U^N_T$ is very close to its limit $\cU_T$ defined in ~\eqref{eq_limit_renorm_pot_nonquad}. 
\begin{proposition}\label{prop_proximity_VN_Vinfty}
There is $C>0$ such that, for all $\psi\in\bbH$ and all $N\geq 1$:
\begin{equation}
\big|U^N_T(\psi) - \cU_T(\psi)\big|
\leq 
\frac{C}{N}
.
\end{equation}
\end{proposition}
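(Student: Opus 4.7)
The plan is to decompose
\[
U^N_T(\psi) - \cU_T(\psi) = A_N(\psi) + B_N,
\qquad
A_N(\psi) := -\tfrac{1}{N}\log Y^N(\psi) - \tilde\cU_T(\psi),
\quad
B_N := \tfrac{1}{N}\log Z^N_T - \inf\cF_T,
\]
where $Y^N(\psi) := \E_{\alpha_V^{\otimes N}}[\exp(\tfrac{1}{T}(\psi,\sum_i n_\cdot(x_i))_\bbH - \tfrac{1}{2TN}\sum_{i,j}W^+(x_i,x_j))]$ and $\tilde\cU_T := \cU_T - \inf\cF_T$. I would bound the two pieces separately, with constants depending only on $M, L, T$ from Assumption~\ref{ass_Wnonquad} and the log-Sobolev regularity granted by Assumption~\ref{ass_Vnonquad}.

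For $A_N(\psi)$, the strategy is to exploit flat convexity of $W^+$, which is the only structural feature remaining once $W^-$ has been absorbed into the linear field $\psi$. The upper bound $-\tfrac{1}{N}\log Y^N(\psi) \leq \tilde\cU_T(\psi) + C/N$ would follow from the Gibbs variational principle applied with the product trial measure $\rho_*^{\otimes N}$, where $\rho_*$ attains the infimum in $\tilde\cU_T(\psi)$; the only error comes from the diagonal discrepancy $\tfrac{1}{2TN}[\int W^+(x,x)\rho_*(dx) - \int W^+\rho_*^{\otimes 2}]$, which is $O(M^2/N)$. The matching lower bound $-\tfrac{1}{N}\log Y^N(\psi) \geq \tilde\cU_T(\psi)$ is expected to be exact: flat convexity of $W^+$ applied to the empirical measure $L_N = \tfrac{1}{N}\sum_i\delta_{x_i}$ yields
\[
\int W^+\, L_N^{\otimes 2} \geq -\int W^+\rho_*^{\otimes 2} + \tfrac{2}{N}\sum_i \bar W^+_*(x_i),
\qquad \bar W^+_*(x) := \int W^+(x,y)\rho_*(dy),
\]
decoupling the integrand of $Y^N(\psi)$ into $N$ one-body factors. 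Evaluating the resulting one-dimensional Gibbs problem and invoking the Euler--Lagrange characterisation of $\rho_*$ then collapses all constants to reproduce $\tilde\cU_T(\psi)$ without residual error.

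For $B_N$, the plan is to use the Hubbard--Stratonovich identity underlying the measure decomposition of Section~\ref{sec: Definition of the interaction}, namely $Z^N_T = \int Y^N(\psi)\,\gamma^\bbH_{T/N}(d\psi)$. Combined with the uniform bound $Y^N(\psi) = e^{-N\tilde\cU_T(\psi) + O(1)}$ from the previous paragraph, this reduces the problem to controlling the Gaussian integral $\int e^{-N\tilde\cU_T(\psi)}\gamma^\bbH_{T/N}(d\psi)$. After truncation to finitely many modes (Remark~\ref{Rem: truncation}), absorbing the Gaussian density recasts this integral as $(N/(2\pi T))^{K/2}\int e^{-N\cV_T(\psi)}d\psi$ modulo an additive constant, to which Laplace's method applies. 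The crucial point is that the prefactor $(N/(2\pi T))^{K/2}$ from the Gaussian normalisation cancels exactly with the Laplace volume factor $(2\pi/N)^{K/2}$, so the putative $O(\log N/N)$ terms vanish and only $O(1/N)$ corrections survive. Identification of the resulting $-\inf_\psi\cV_T(\psi)$ with the relevant value of $-\inf\cF_T$ then proceeds by swapping the infima over $\psi$ and $\rho$ and performing the inner Gaussian minimisation, which reconstructs the $-\tfrac{1}{2T}\int W^-\rho\otimes\rho$ contribution hidden in $\cF_T$ via the identity $\|\!\int n_\cdot \rho\|_\bbH^2 = \int W^- \rho\otimes\rho$.

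The main technical hurdle is the Laplace step for $B_N$: the $O(1/N)$ accuracy requires either a uniformly non-degenerate minimiser of $\cV_T$ or sufficient integrability of $e^{-N\cV_T}$ against the Gaussian weight. Under the strong convexity assumption of Theorem~\ref{theo_nonquadratic} both are automatic; more generally, I would combine a local quadratic expansion of $\cV_T$ around its minimiser with the ambient Gaussian coercivity $\tfrac{1}{2T}\|\psi\|_\bbH^2$ built into $\cV_T$ to produce sub-Gaussian tail bounds uniform in $N$, controlling the contribution away from the minimiser and closing the $O(1/N)$ estimate.
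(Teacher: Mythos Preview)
Your treatment of $A_N(\psi)$ is correct and is in fact the heart of the paper's argument: the tangent inequality from flat convexity of $W^+$ (your lower bound) and the Gibbs variational / Jensen step with the product trial $\rho_*^{\otimes N}$ (your upper bound) are exactly what the paper uses. The difference lies entirely in how the additive constants $\tfrac{1}{N}\log Z^N_T$ and $\inf\cF_T$ are handled.

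You split these off as $B_N$ and propose to control it by Hubbard--Stratonovich followed by Laplace asymptotics for $\int e^{-N\cV_T(\psi)}\,d\psi$ to $O(1/N)$. This step has a genuine gap. A Laplace remainder of order $1/N$ requires a non-degenerate (and sufficiently regular, say $C^2$) minimiser of $\cV_T$; neither non-degeneracy nor $C^2$ regularity is part of the hypotheses of the proposition, and the paper does not establish them --- indeed, in the proof of Lemma~\ref{lemm_LSI_renorm_nonquad} the paper explicitly resorts to approximating $\cV_T$ by $C^2$ functions rather than asserting it is $C^2$. Under the strong convexity assumption of Theorem~\ref{theo_nonquadratic} you could push the Laplace step through, but the proposition is stated without that assumption and the paper's proof does not use it.

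The paper avoids $B_N$ altogether. Instead of separating $Z^N_T$ from $Y^N(\psi)$, it carries both constants through a change of reference measure from $\alpha_V^{\otimes N}$ to $(\mu^{\infty,\psi}_T)^{\otimes N}$, where $\mu^{\infty,\psi}_T$ is the minimiser of the variational problem defining $\cU_T(\psi)$. The Euler--Lagrange equation for $\mu^{\infty,\psi}_T$ is used to compute the normalising constant explicitly, and once this is done all the additive constants --- $\tfrac{1}{N}\log Z^N_T$, $\inf\cF_T$, $\log Z^\psi_{V,T}$ --- cancel exactly, leaving the single identity
\[
\cU_T(\psi) - U^N_T(\psi)
=
\frac{1}{N}\log \E_{(\mu^{\infty,\psi}_T)^{\otimes N}}
\Big[\exp\Big(-\tfrac{N}{2T}\int W^+\,(\pi^N-\mu^{\infty,\psi}_T)^{\otimes 2}\Big)\Big].
\]
From here your $A_N$-style bounds apply verbatim: flat convexity makes the exponent nonpositive (upper bound $\leq 0$), and Jensen plus boundedness of $W^+$ gives the lower bound $\geq -C/N$. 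So the fix is not to estimate $B_N$ separately, but to recognise that the Euler--Lagrange bookkeeping you already invoke for $A_N$ also absorbs $B_N$ if you set up the change of measure at the level of $U^N_T$ rather than $Y^N$.
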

We will also need the following lemma that states that the Bakry--\'Emery criterion can be used to obtain a log-Sobolev inequality on $\bbH$. 
\begin{lemma}\label{lemm_LSI_renorm_nonquad}
Let $\lambda_T>0$ and assume that $\mathcal V_T = \cU_T + \|\cdot\|^2_{\bbH}/2T$ is $\lambda_T$-convex (recall~\eqref{eq: V_T convexe}). 
Then there is $C>0$ such that  the renormalised measure satisfies the followint log-Sobolev inequality: 
\begin{equation}
\ent_{\nu^r_T}\big(F^2 \big)
\leq 
\frac{2 \, C}{N \lambda_T }\E_{\nu^r_T}\Big[ \sum_{k \geq -d} (w_k^-)^{-1}
\Big( \partial_{\psi_k} F \Big)^2 \Big].
\label{eq_LSI_renormalised measure W}
\end{equation}
\end{lemma}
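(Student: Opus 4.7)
The plan is to treat $\nu^r_T$ as a strongly log-concave reference measure perturbed by a bounded factor, and then combine the Bakry--Émery criterion in the weighted Hilbert geometry of $\bbH$ with a Holley--Stroock perturbation argument. Combining the defining formula \eqref{eq_renorm_measure_nonquad}, the density of the Gaussian $\gamma^\bbH_{T/N}$ with respect to Lebesgue measure $\prod_{k\geq -d} d\psi_k$, and the relation $\mathcal{V}_T = \mathcal{U}_T + \|\cdot\|^2_\bbH/(2T)$, I first rewrite
\begin{equation}
\nu^r_T(d\psi) \;\propto\; \exp\Bigl(-N\mathcal{V}_T(\psi) - N\bigl[U^N_T(\psi) - \mathcal{U}_T(\psi)\bigr]\Bigr) \prod_{k\geq -d} d\psi_k.
\end{equation}
By Remark \ref{Rem: truncation} it is enough to work with finitely many modes, so all manipulations below take place on a finite-dimensional Euclidean space.

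In the first step, I establish the desired log-Sobolev inequality for the reference measure $\mu_T^\star \propto e^{-N\mathcal{V}_T(\psi)}\prod_k d\psi_k$ via the Bakry--Émery criterion adapted to the weighted inner product $(\cdot,\cdot)_\bbH$. The $\lambda_T$-strong convexity condition \eqref{eq: V_T convexe} amounts, for a $C^2$ function, to the matrix inequality $\nabla^2 \mathcal{V}_T \succeq \lambda_T D$ with $D = \mathrm{diag}((w_k^-)_{k\geq -d})$. Under the linear change of variables $\tilde\psi = D^{1/2}\psi$ this becomes the standard bound $\lambda_T \,\mathrm{Id}$ in the new coordinates; the standard Bakry--Émery criterion then yields an LSI for $\mu_T^\star$ with constant $N\lambda_T$ and Euclidean gradient $\sum_k(\partial_{\tilde\psi_k}F)^2$. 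Since $\partial_{\tilde\psi_k} = (w_k^-)^{-1/2}\partial_{\psi_k}$, the gradient term transforms to $\sum_k(w_k^-)^{-1}(\partial_{\psi_k}F)^2$, matching the right-hand side of \eqref{eq_LSI_renormalised measure W}.

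For the second step, Proposition \ref{prop_proximity_VN_Vinfty} supplies $\sup_\psi |N(U^N_T(\psi) - \mathcal{U}_T(\psi))| \leq C$ uniformly in $N$, so that $\nu^r_T$ is obtained from $\mu_T^\star$ by a reweighting whose oscillation is at most $2C$, independently of $N$. The classical Holley--Stroock perturbation lemma then transfers the LSI of the first step to $\nu^r_T$ with a multiplicative loss $e^{2C}$, which can be absorbed into the constant $C$ appearing in \eqref{eq_LSI_renormalised measure W}.

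The main obstacle is the passage from the displacement-convexity assumption \eqref{eq: V_T convexe} to the pointwise Hessian bound on which the Bakry--Émery computation rests, since $\mathcal{V}_T$ is defined through an infimum and need not be $C^2$ a priori. This can be handled either by mollifying $\mathcal{V}_T$ by a small smooth convolution, running the argument on the mollified potential, and passing to the limit, or by invoking Bakry--Émery in its log-concave formulation (derivable from Prékopa--Leindler), which requires only convexity and a quadratic modulus of the form \eqref{eq: V_T convexe}. In either approach the finite-dimensional reduction from Remark \ref{Rem: truncation} renders the regularisation step standard.
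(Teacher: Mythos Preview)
Your proposal is correct and follows essentially the same approach as the paper: reduce to finitely many modes, apply Bakry--\'Emery to the strongly log-concave reference measure $e^{-N\mathcal V_T}$ (with the weighted $\bbH$-geometry handled exactly as you describe), and then use Holley--Stroock together with Proposition~\ref{prop_proximity_VN_Vinfty} to absorb the $O(1)$ perturbation $N(U^N_T-\mathcal U_T)$. The only cosmetic difference is in the regularisation step: the paper invokes the smooth $\lambda_T$-convex approximation theorem of Greene--Wu/Azagra and then passes to the weak limit, whereas you suggest mollification or the Pr\'ekopa--Leindler route, all of which are equivalent standard devices in finite dimension.
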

Assuming Proposition~\ref{prop_proximity_VN_Vinfty} and Lemma~\ref{lemm_LSI_renorm_nonquad}, 
let us prove~Proposition \ref{prop: LSI renormalised measure}.

\begin{proof}[Proof of Proposition \ref{prop: LSI renormalised measure}]
Lemma~\ref{lemm_LSI_renorm_nonquad} gives 
\begin{equation}
\ent_{\nu^r_T}\big(\E_{\mu^{N,\psi}_T}[F^2]\big)
\leq 
\frac{2 \, C}{N \lambda_T}\E_{\nu^r_T}\Big[ \sum_{k \geq -d} (w_k^-)^{-1}
\Big( \partial_{\psi_k}\sqrt{\E_{\mu^{N,\psi}_T}[F^2]}\Big)^2 \Big].
\label{eq_LSI_withHgradient}
\end{equation}
Expanding the gradient yields:
\begin{align}
\sum_{k \geq -d} (w_k^-)^{-1}
& \Big( \partial_{\psi_k}\sqrt{\E_{\mu^{N,\psi}_T}[F^2]}\Big)^2
=
\frac{1}{4\E_{\mu^{N,\psi}_T}[F^2]} \sum_{k\geq -d} (w_k^-)^{-1}|\partial_{\psi_k}\E_{\mu^{N,\psi}_T}[F^2]|^2
\nnb
&=
\frac{1}{4T^2\E_{\mu^{N,\psi}_T}[F^2]} \sum_{k\geq-d} w_k^-\Big|\cov_{\mu^{N,\psi}_T}\Big(F^2,\sum_{i=1}^Nn_k(x_i)\Big)\Big|^2
\nnb
& \leq
\frac{4}{\big(\gamma^{N,\psi}_T \big)^2} 
\frac{1}{4T^2\E_{\mu^{N,\psi}_T}[F^2]} \sum_{k\geq-d} w_k^-
 \big( N \sup_x | \nabla n_k (x)|^2 \big)   \E_{\mu^{N,\psi}_T}[F^2] \, \E_{\mu^{N,\psi}_T}\big[|\nabla F|^2\big] \nnb
& \leq
\frac{ N L^2 }{T^2  \big(\gamma^{N,\psi}_T \big)^2}  \E_{\mu^{N,\psi}_T}\big[|\nabla F|^2\big] 
,
\end{align}
where we used  Lemma~\ref{lemm_BH} 
(denoting by $\gamma^{N,\psi}_T$ the log-Sobolev constant of the measure 
$\mu^{N,\psi}_T$) 
and the  assumed bound $L^2\geq \|\sup_x |\nabla n_\cdot(x)|\|^2_{\bbH}$ in the last line (recall~\eqref{eq: borne sup n}). 
Combined with \eqref{eq_LSI_withHgradient},
this completes the proof of Proposition \ref{prop: LSI renormalised measure}.
\end{proof}

\bigskip

\begin{proof}[Proof of Proposition~\ref{prop_proximity_VN_Vinfty}]
Let $\psi\in\bbH$ and denote by $\cF_T^\psi(\cdot)$ the functional appearing in the variational principle \eqref{eq_limit_renorm_pot_nonquad} defining $\cU_T$, which we rewrite as:
\begin{equation}
\cF_T^\psi(\rho)
=
H(\rho|\alpha^\psi_{V,T}) 
+ \frac{1}{2T} \int_{\R^{d}\times\R^d} W^+ d\rho^{\otimes 2}
+C(\psi,W,V,T)
,\qquad 
\rho\in {\bf M}_1(\R^d)
,
\end{equation}
where $C(\psi,W,V,T)>0$ is a constant and:
\begin{equation}
{\bf M}_1(\R^d)\ni \alpha^\psi_{V,T}(dx)
=
\frac{1}{Z^\psi_{V,T}} 
\exp\Big(-V(x)+\frac{1}{T}(\psi,n_\cdot(x))_{\bbH}\Big)\, dx
.
\label{eq_def_alpha_psiVT}
\end{equation}
As $\rho\mapsto \int W^+d\rho^{\otimes 2}$ is bounded below, convex and $\rho\mapsto H(\rho|\alpha^\psi_{V,T})\geq 0$ is (strictly) convex, 
the functional $\cF^\psi_T$ admits a (unique) minimiser,  
call it $\mu^{\infty,\psi}_T$. 
This minimiser is absolutely continuous with respect to $\alpha^{\psi}_{V,T}$. 
The uniqueness will not be used below.

The critical point equation for $\cF_T^\psi$ gives the following identities for $\mu^{\infty,\psi}_T$. 
Letting $f := \frac{{\rm d \mu^{\infty,\psi}_T}}{{\rm d\alpha_{V,T}^\psi}}$, 
for some constant $C=C(\psi)>0$,
\begin{equation}
\log f(x) 
= 
-\frac{1}{T}\int W^+(x,y)\,\mu^{\infty,\psi}_T(dy)
+  C(\psi)
\qquad
\text{for $\mu^{\infty,\psi}_T$-a.e. $x$}.
\label{eq_fixed_point_Fpsi}
\end{equation}
For once, we compute the constant $C(\psi)$ as we are looking to compensate it precisely. 
Notice the elementary identity:
\begin{equation}
H(\rho|\alpha_{V}) 
= 
H(\rho|\alpha^\psi_{V,T})
+
\frac{1}{T}\int (\psi,n_\cdot(x))_{\bbH}\, \rho(dx) - \log Z^\psi_{V,T},
\qquad 
\rho\in {\bf M}_1(\R^d)
.
\end{equation}
Following~\cite[Proposition 4.2 item (3)]{MR743526},
we 
integrate both sides of~\eqref{eq_fixed_point_Fpsi} against $\mu^{\infty,\psi}_T$, 
recalling the definition~\eqref{eq_limit_renorm_pot_nonquad} of $\cU_T$ 
to obtain:
\begin{align}
&H(\mu^{\infty,\psi}_T|\alpha^\psi_{V,T})
=
-\frac{1}{T}\int W^+(x,y)\, \mu^{\infty,\psi}_T(dx)\,\mu^{\infty,\psi}_T(dy)
+ C (\psi)
\nnb
&\qquad=
\cU_T(\psi) - \frac{1}{2T}\int W^+(x,y)\, \mu^{\infty,\psi}_T(dx)\,\mu^{\infty,\psi}_T(dy) + \log Z^\psi_{V,T}- \inf \cF_T
,
\end{align}
so that:
\begin{equation}
C (\psi)
=
\cU_T(\psi) + \frac{1}{2T}\int W^+(x,y)\, \mu^{\infty,\psi}_T(dx)\,\mu^{\infty,\psi}_T(dy)
 + \log Z^\psi_{V,T}- \inf \cF_T
.
\end{equation}
Recall now that the renormalised potential $U^N_T(\psi)$ \eqref{eq_def_renormalised_potential_nonquad} reads:
\begin{equation}
e^{-N U^N_T(\psi)}
= 
 \frac{ ( Z^\psi_{V,T})^N }{Z^N_T}\int \exp\Big( -\frac{N}{2T}\int W^+(x,y)\,  \pi^N(dx)\pi^N(dy) \Big) \, d(\alpha_{V,T}^\psi)^{\otimes N}
,
\end{equation}
where $\pi^N=\frac{1}{N}\sum_i \delta_{x_i}$ denotes the empirical measure. 
Turning $\alpha^{\psi}_{V,T}$ into $\mu^{\infty,\psi}_T$, 
we get:
\begin{align}
\cU_T(\psi)- U^N_T(\psi)
=
\frac{1}{N}\log\int \exp\Big( -\frac{N}{2T}\int W^+\,  (\pi^N-\mu^{\infty,\psi}_T)^{\otimes 2}\Big) \, d(\mu^{\infty,\psi}_T)^{\otimes N}
.
\end{align}
It therefore suffices to prove that the integral is bounded by $O(1)$ uniformly in $\psi$. 
The flat convexity of $W^+$ implies that  
for any  finite signed measure $\rho$ such that $\int \rho =0$ one has 
$\int W^+ \rho^{\otimes 2} \geq 0$, thus
the exponential is at most $1$:
\begin{equation}
\cU_T(\psi)- U^N_T(\psi)
\leq 
0
.
\end{equation}
On the other hand, 
Jensen's inequality and an expansion of $(\pi^N-\mu^{\infty,\psi}_T)^{\otimes 2}$ give:
\begin{align}
\cU_T(\psi)- U^N_T(\psi)
&\geq 
-\frac{1}{2T}\E_{(\mu^{\infty,\psi}_T)^{\otimes N}}\Big[\int W^+\,  (\pi^N-\mu^{\infty,\psi}_T)^{\otimes 2}\Big]
\nnb
&=
\frac{1}{2TN}\int W^+\,  (\mu^{\infty,\psi}_T)^{\otimes 2}
-\frac{1}{2TN}\int W^+(x,x)\, \mu^{\infty,\psi}_T(dx)
.
\end{align}
As $W^+$ is bounded, 
the right-hand side above is bounded uniformly in $\psi$ by $C/N$ for some $C>0$. 
This concludes the proof.
\end{proof}
\begin{proof}[Proof of Lemma~\ref{lemm_LSI_renorm_nonquad}]
Recall from Remark~\ref{Rem: truncation} that we work under the assumption that $\bbH$ is finite-dimensional.  
By assumption $\cV_T = \cU_T + \|\cdot\|^2_{\bbH}/2T$ is $\lambda_T$-convex. 
It is proven in~\cite[Theorem 2]{Greene1976CCF} (see~\cite{Azagra2012GlobalAF} for a claim directly applicable to the present setting) 
that there is a sequence $\cV^{(n)}_T:\bbH\to\R$ of $\lambda_T$-convex $C^2$ functions such that $\|\cV_T-\cV^{(n)}_T\|_\infty\leq 2^{-n}$. 
The Bakry--\'Emery criterion then implies that the probability measure with density proportional to $e^{\cV^{(n)}_T}$ satisfies a log-Sobolev inequality 
of the form \eqref{eq_LSI_renormalised measure W}. 
By weak convergence as $n\to\infty$ the same is therefore true for $e^{-N\cV_T(\psi)}\, d\psi$. 
Since $\|U^N_T-\cU_T\|_\infty\leq C/N$ by Proposition~\ref{prop_proximity_VN_Vinfty},  
another application of the Holley--Stroock result concludes the proof 
up to changing the log-Sobolev constant by a multiplicative factor.
\end{proof}

\subsubsection{Conclusion of the proof of Theorem~\ref{theo_nonquadratic}}
\label{sec_concl_proof_nonquad}

At this point we have established in \eqref{eq: decomposition entropie} and \eqref{eq_bound_ent_renorm_measure_nonquad} that if $\mathcal V_T$ is $\lambda_T$-strongly convex  for some $\lambda_T>0$:
\begin{align}
\ent_{m^N_T}(F^2)
&\leq \frac{2 C L^2 }{\lambda_T} 
\E_{\nu^r_T} \Big[\, \frac{1}{\big(\gamma^{N,\psi}_T\big)^{2}} \,
 \E_{\mu^{N,\psi}_T}\big[|\nabla F|^2\big] \Big]
 +
\E_{\nu^r_T} \Big[\,  \ent_{\mu^{N,\psi}_T}(F^2)\, \Big] 
.
\end{align}
Using again the log-Sobolev inequality for the fluctuation measure (Proposition \ref{prop_LSI_fluct_measure_nonquad}), this implies:
\begin{align}
\ent_{m^N_T}(F^2)
\leq 
\E_{\nu^r_T} \bigg[\, \Big(\frac{2  C L^2}{\lambda_T (\gamma^{N,\psi}_T)^{2}} + \frac{2}{\gamma^{N,\psi}_T} \Big)\E_{\mu^{N,\psi}_T}\big[|\nabla F|^2\big] \, \bigg]  .
\label{eq_ccl_proof_non_quad0}
\end{align}
It remains to express the right-hand side in terms of the Dirichlet form for the measure $m^N_T$. 
The starting point is the following elementary Gaussian identity. 
\begin{lemma}\label{lemm_gaussian}
For any $G:(\R^{d})^N\to\R_+$ and $\Phi:\bbH\to\R_+$,
\begin{equation}
\E_{\nu^r_T}\Big[ \Phi(\psi)\E_{\mu^{N,\psi}_T}[G(x)]\Big]
=
\E_{m^{N}_T}\Big[G(x)\,  \E_{\gamma^{\bbH,x}_{T/N}}[\Phi(\psi)]\Big]
,
\end{equation}
where $\gamma^{\bbH,x}_{T/N}$ is the Gaussian measure on $\bbH$ with variance $T/N$ (as in \eqref{eq: Gaussian H}) and mean 
$\big( \frac{1}{N}\sum_i n_k(x_i) \big)_{k \geq -d}$. 
\end{lemma}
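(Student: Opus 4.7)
The plan is to unfold the two measures on the left-hand side, cancel the renormalisation factor $e^{\pm N U_T^N(\psi)}$, and then complete the square in $\psi$ to recognise the Gaussian reweighting.

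Concretely, I would first substitute the explicit definitions \eqref{eq_renorm_measure_nonquad} of $\nu^r_T$ and \eqref{eq_fluct_measure_nonquad} of $\mu^{N,\psi}_T$ into the left-hand side. After using Fubini to swap the $\psi$ and $x$ integrations, the two factors $e^{N U^N_T(\psi)}$ and $e^{-N U^N_T(\psi)}$ cancel exactly (this cancellation is the reason $U^N_T$ was introduced to renormalise both measures simultaneously). What remains is
\begin{equation}
\E_{m^N_T}\!\left[G(x)\int_\bbH \Phi(\psi)\,\exp\!\left(\tfrac{1}{T}\bigl(\psi,S(x)\bigr)_\bbH-\tfrac{1}{2TN}\|S(x)\|_\bbH^2\right)\gamma^\bbH_{T/N}(d\psi)\right],
\end{equation}
where I abbreviate $S(x):=\sum_{i=1}^N n_\cdot(x_i)\in\bbH$, so that $\frac{1}{N}S(x)=\bigl(\tfrac{1}{N}\sum_i n_k(x_i)\bigr)_{k\ge-d}$ is the mean in the statement.

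Next I would complete the square in $\psi$. Since $\gamma^\bbH_{T/N}(d\psi)\propto e^{-\frac{N}{2T}\|\psi\|_\bbH^2}\prod_k d\psi_k$, the exponent combines into
\begin{equation}
-\tfrac{N}{2T}\|\psi\|_\bbH^2+\tfrac{1}{T}(\psi,S(x))_\bbH-\tfrac{1}{2TN}\|S(x)\|_\bbH^2
=-\tfrac{N}{2T}\bigl\|\psi-\tfrac{1}{N}S(x)\bigr\|_\bbH^2.
\end{equation}
Since this is a mere translate of the original Gaussian, the Lebesgue normalisation is unchanged, so the reweighted measure is exactly $\gamma^{\bbH,x}_{T/N}$, the Gaussian on $\bbH$ with covariance $(T/N)\,\id$ and mean $\tfrac{1}{N}S(x)$. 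Substituting back yields the right-hand side.

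There is essentially no obstacle here beyond bookkeeping: the only point requiring care is to track the normalisation when we truncate the mode decomposition so that $\bbH$ is finite-dimensional (as in Remark~\ref{Rem: truncation}), which allows the Gaussian density and the change of variable $\psi\mapsto\psi-\tfrac{1}{N}S(x)$ to be written unambiguously. Fubini is justified once $\Phi,G\ge 0$, consistent with the hypothesis of the lemma.
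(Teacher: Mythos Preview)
Your proposal is correct and takes essentially the same approach as the paper: unfold the definitions of $\nu^r_T$ and $\mu^{N,\psi}_T$, cancel the factor $e^{\pm N U^N_T(\psi)}$, swap the order of integration via Fubini (justified by nonnegativity of $G,\Phi$), and complete the square in $\psi$ to recognise the shifted Gaussian $\gamma^{\bbH,x}_{T/N}$. The paper's proof is slightly terser but follows the identical logic.
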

\begin{proof}
We go back to the definition~\eqref{eq_fluct_measure_nonquad}--\eqref{eq_renorm_measure_nonquad} of the decomposition $m^N_T=\nu^r_T\mu^{N,\psi}_T$ in terms of moment generating function and exchange the order of integration, which is legitimate as both $G,\Phi$ are non-negative:
\begin{align}
\E_{\nu^r_T}\Big[ \Phi(\psi)\E_{\mu^{N,\psi}_T}[G(x)]\Big]
&=
\E_{\gamma^{\bbH}_{T/N}}\bigg[ \Phi(\psi) \E_{m^N_T}\bigg[G \exp\Big[\, \frac{1}{T}\Big(\psi,\sum_i n_{\cdot}(x_i)\Big)_{\bbH} - \frac{N}{2T}\Big\| \frac{1}{N}\sum_{i=1}^N n_{\cdot}(x_i)\Big\|^2_{\bbH}\, \Big]\bigg]\bigg]
\nnb
&=
\E_{m^{N}_T}\Big[G(x)\,  \E_{\gamma^{\bbH,x}_{T/N}}[\Phi(\psi)]\Big]
.
\end{align}
\end{proof}
\begin{proof}[Proof of Theorem~\ref{theo_nonquadratic}]
Let $A>0$ to be chosen later. 
Write for short:
\begin{equation}
\kappa^{N,\psi}_T
:=
\frac{2 C L^2}{\lambda_T (\gamma^{N,\psi}_T)^{2}} + \frac{2}{\gamma^{N,\psi}_T}.
\end{equation}
Split the expectation on $\psi=(\varphi,\zeta)\in\bbH= \R^d\times\bbH_0$ in the right-hand side of~\eqref{eq_ccl_proof_non_quad0} as follows: 
\begin{align}
\E_{\nu^r_T}\Big[\, \kappa^{N,\psi}_T\E_{\mu^{N,\psi}_T}\big[|\nabla F|^2\big]\, \Big]
&= 
\E_{\nu^r_T}\Big[\,{\bf 1}_{\|\zeta\|_{\bbH_0}\leq A}  \,  \kappa^{N,\psi}_T  \,  \E_{\mu^{N,\psi}_T}\big[|\nabla F|^2\big]\, \Big]
\nnb
&\quad +
\E_{\nu^r_T}\Big[ \, {\bf 1}_{\|\zeta\|_{\bbH_0}>A} \, \kappa^{N,\psi}_T  \, \E_{\mu^{N,\psi}_T}\big[|\nabla F|^2\big]\, \Big]
.
\label{eq_ccl_proof_non_quad1}
\end{align}
Consider first the case where $\|\zeta\|_{\bbH_0}\leq A$. 
By Proposition~\ref{prop_LSI_fluct_measure_nonquad} item (i) one has then, 
for some $c,N_0>0$ independent of $N,A,\psi$ and all $N\geq N_0 e^{cA/T}$:
\begin{align}
\E_{\nu^r_T}\Big[\,{\bf 1}_{\|\zeta\|_{\bbH_0}\leq A} \, \kappa^{N,\psi}_T\E_{\mu^{N,\psi}_T}\big[\, |\nabla F|^2\, \big]\, \Big]
&\leq 
ce^{cA/T}\Big(\frac{1}{\lambda_T }+1\Big) \E_{\nu^r_T}\Big[ \E_{\mu^{N,\psi}_T}\big[\, |\nabla F|^2\, \big]\Big]
\nnb
&=
ce^{cA/T}\Big(\frac{1}{\lambda_T }+1\Big) \E_{m^N_T}\big[\, |\nabla F|^2\, \big]
.
\end{align}
Proposition~\ref{prop_LSI_fluct_measure_nonquad} item (ii) implies on the other hand that, for $N\leq N_0 e^{cA/T}$ and some $c'>0$:
\begin{equation}
\E_{\nu^r_T}\Big[\,{\bf 1}_{\|\zeta\|_{\bbH_0}\leq A} \, \kappa^{N,\psi}_T\, \E_{\mu^{N,\psi}_T}\big[\, |\nabla F|^2\, \big]\, \Big]
\leq 
c'e^{c'e^{cA/T}}\Big(\frac{1}{\lambda_T}+1\Big) \E_{m^N_T}\big[\, |\nabla F|^2\, \big]
.
\label{eq_bound_first_half_LSI_nonquad}
\end{equation}
The first term in the right-hand side of~\eqref{eq_ccl_proof_non_quad1} is thus bounded by $C(T,A)(1+1/\lambda_T)$ uniformly in $N$ for a locally bounded $T'\mapsto C(T',A)\geq 0$.\\

Consider now the second term in the right-hand side of~\eqref{eq_ccl_proof_non_quad1}. 
Using Lemma~\ref{lemm_gaussian} with $G=|\nabla F|^2\geq 0$ and $\Phi = {\bf 1}_{\|\zeta\|_{\bbH_0}>A} \,  \kappa^{N,\psi}_T$ yields:
\begin{align}
&\E_{\nu^r_T}\Big[\,{\bf 1}_{\|\zeta\|_{\bbH_0}>A} \,  \kappa^{N,\psi}_T\, \E_{\mu^{N,\psi}_T}\big[|\nabla F|^2\big]\, \Big]
=
\E_{m^N_T}\Big[\,|\nabla F|^2\, \E_{\gamma^{\bbH,x}_{T/N}}\big[\, {\bf 1}_{\|\zeta\|_{\bbH_0}>A} \, \kappa^{N,\psi}_T\, \big] \, \Big]
\nnb
&\hspace{3cm}
\leq
c e^{2M^2 N/T} \, \E_{m^N_T}\bigg[\,|\nabla F|^2\,  \E_{\gamma^{\bbH,x}_{T/N}}\Big[ e^{4M\|\zeta\|_{{\bbH}_0}/T} {\bf 1}_{\|\zeta\|_{\bbH_0}>A } \, \Big]
\, \bigg]
,
\end{align}
where we use item (ii) of Proposition~\ref{prop_LSI_fluct_measure_nonquad} to get the last line.  
Concentration under the Gaussian measure $\gamma^{\bbH,x}$ gives a bound on the probability of the event $\{\| \zeta \|_{\bbH_0}>A\}$. 
Since $\sup_x\|n_\cdot(x)\|_{\bbH_0}\leq M$ by Assumption~\ref{ass_Wnonquad}, 
for each $A>2M$ it holds that $\|\zeta-\frac{1}{N}\sum_i n_\cdot(x_i)\|_{\bbH_0}>A/2$ if $\|\zeta\|_{\bbH_0}>A$. 
Thus, for some $c'>0$ and each $A>2M$:
\begin{equation}
\bbE_{\gamma^{\bbH,x}_{T/N}} \big[ \|\zeta\|_{\bbH_0}> A \big]
\leq 
c'\exp\Big[-\frac{N  A^2}{8T}\Big]
.
\label{eq: decroissance gaussienne}
\end{equation}
We can then write, for some $c''>0$:
\begin{align}
\E_{\gamma^{\bbH,x}_{T/N}}\Big[ e^{4M\|\zeta\|_{{\bbH}_0}/T}{\bf 1}_{\|\zeta\|_{\bbH_0}>A } \, \Big]
&\leq 
c' \int_{A}^\infty 
\exp\Big[\frac{4Ma}{T}-\frac{N  a^2}{8T}\Big]\, da
\nnb
&\leq 
c' \exp\Big[ \frac{32 M^2}{NT}-\frac{c''N}{T}\Big(A-\frac{16M}{N}\Big)^2\Big]
.
\end{align}
It remains to take $A>2M$ so that also $32M^2/(NT)-c''NA^2/4T+2NM^2T\leq 0$ for each $N\geq 1$, 
which is possible for $A$ larger than a constant depending only on $M,T$, to obtain:
\begin{equation}
\E_{\nu^r_T}\Big[\,{\bf 1}_{\|\zeta\|_{\bbH_0}>A}\,  \kappa^{N,\psi}_T\, \E_{\mu^{N,\psi}_T}\big[|\nabla F|^2\big]\, \Big]
\leq 
c' \E_{m^N_T}\big[\, |\nabla F|^2\, \big]
.
\end{equation}
Together with~\eqref{eq_bound_first_half_LSI_nonquad} this concludes the proof.

\end{proof}

\subsection{Proof of Lemma~\ref{lemma_cvx_cF_is_cvx_cV}}\label{sec_link_renorm_pot_hatFT}

\begin{proof}[Proof of Lemma~\ref{lemma_cvx_cF_is_cvx_cV}]
Recall Definition~\ref{def: general renormalised potential} of $\cV_T$. 
Assume $\hat\cF_T$ is $\delta$-strongly convex ($\delta>0$). 
Let $\psi_1,\psi_2\in\bbH$ and $\alpha\in[0,1]$. 
Write $\psi_\alpha = \alpha \psi_1+(1-\alpha)\psi_2$, 
$\Delta\psi=\psi_1-\psi_2$ and similarly define $\cM_\alpha$, $\Delta\cM$ for $\cM_1,\cM_2\in\bbH$. 
Using the strong convexity of $\hat\cF_T$ in the inequality and, 
in the last line, 
the fact that $1$-strong convexity holds for $\|\cdot\|^2_{\bbH}/2$  with an equal sign, 
\begin{align}
\alpha&\cV_T(\psi_1)+(1-\alpha)\cV_T(\psi_2)
\nnb
&=
\inf_{\cM_1,\cM_2}\Big\{\alpha\hat\cF_T(\cM_1)+(1-\alpha)\hat\cF_T(\cM_2) + \frac{\alpha}{2T}\|\psi_1-\cM_1\|_{\bbH}^2+\frac{1-\alpha}{2T}\|\psi_2-\cM_2\|_{\bbH}^2\Big\}
\nnb
&\geq 
\inf_{\cM_1,\cM_2}\Big\{\cF_T(\cM_\alpha) + \frac{\delta\alpha(1-\alpha)}{2}\|\Delta\cM\|^2_{\bbH}
+ \frac{\alpha}{2T}\|\psi_1-\cM_1\|_{\bbH}^2+\frac{1-\alpha}{2T}\|\psi_2-\cM_2\|_{\bbH}^2\Big\}
\nnb
&=
\inf_{\cM_1,\cM_2}\Big\{\cF_T(\cM_\alpha) + \frac{\delta\alpha(1-\alpha)}{2}\|\Delta\cM\|^2_{\bbH}
+ \frac{1}{2T}\|\psi_\alpha-\cM_\alpha\|_{\bbH}^2 +\frac{\alpha(1-\alpha)}{2T}\|\Delta\psi-\Delta\cM\|_{\bbH}^2\Big\}
.
\end{align}
Changing variables from $\cM_1,\cM_2$ to $\cM=\cM_\alpha$, $\cM'=\Delta\cM$ yields:
\begin{align}
\alpha\cV_T(\psi_1)+(1-\alpha)\cV_T(\psi_2)
&\geq 
\cV_T(\psi_\alpha) 
+\inf_{\cM'}\Big\{\frac{\delta\alpha(1-\alpha)}{2}\|\cM'\|^2_{\bbH}+\frac{\alpha(1-\alpha)}{2T}\|\Delta\psi-\cM'\|_{\bbH}^2\Big\}
\nnb
&=
\cV_T(\psi_\alpha) + \frac{\alpha(1-\alpha)}{2}\frac{\delta}{\delta T+1}\|\psi_1-\psi_2\|^2_{\bbH}
.
\end{align}
Thus $\cV_T$ is strongly convex.

\medskip

Conversely, we claim that
$\hat\cF_T$ can be defined in terms of the renormalised potential as follows:
\begin{equation}
\hat\cF_T(\cM)
=
\sup_{\psi\in\bbH}\Big\{ \cV_T(\psi) - \frac{1}{2T}\|\psi-\cM\|_{\bbH}^2\Big\} - \inf\cF_T
.\label{eq_hatcF_T_as_supremum}
\end{equation}
If this is true then an identical proof gives that strong convexity of $\cV_T$ implies strong convexity of $\hat\cF_T$. 
Let us thus show~\eqref{eq_hatcF_T_as_supremum}. 
Recall that ${\bf P}(\cM)=\{\rho: \int_{\R^d} n_\cdot(x)\, \rho(dx) = \cM\}$ for $\cM\in\bbH$.   
Define the (strictly) convex part $\cG_T$ of the free energy $\cF_T$:
\begin{equation}
\cG_T(\rho)
=
\cF_T(\rho)+ \frac{1}{2T}\Big\|\int_{\R^d} n_\cdot(x)\, \rho(dx)\Big\|_{\bbH}^2
.
\end{equation}
Define also:
\begin{equation}
\hat \cG_T(\cM)
=
\inf_{\rho\in {\bf P}(\cM)}\cG_T(\rho)
=
\hat\cF_T(\cM) + \frac{\|\cM\|_{\bbH}^2}{2T},
\end{equation}
with $\hat \cG_T(\cM) = + \infty$ if ${\bf P}(\cM) = \emptyset$.
Then:
\begin{equation}
\cV_T(\psi) 
=
\frac{\|\psi\|^2_{\bbH}}{2T}+\inf_{\cM\in\bbH} \Big\{\hat \cG_T(\cM) - \frac{1}{T}(\psi,\cM)_{\bbH}\Big\} + \inf\cF_T
.
\end{equation}
In particular $-\cV_T (T\cdot)+\frac{T\|\cdot\|^2_{\bbH}}{2}$ is the Legendre transform of $\hat\cG_T$. 
As $\cG_T$ is convex and lower semi-continuous, 
so is $\hat\cG_T$.  
In addition $\hat\cG_T$ is finite as soon as ${\bf P}(\cM)$ is not empty. 
The Legendre transform can therefore be inverted~\cite[Theorem 1.11]{Brezis}:
\begin{equation}
\hat\cG_T(\cM)
=
\sup_{\psi\in\bbH}\Big\{(\psi,\cM)_{\bbH}-\Big(-\cV_T (T\psi)+\frac{T\|\psi\|^2_{\bbH}}{2} +\inf\cF_T\Big)\Big\}.
\label{eq_cG_as_Legendre_transform}
\end{equation}
Since $\hat\cG_T(\cM)=\hat\cF_T(\cM)+\|\cM\|_{\bbH}^2/(2T)$, 
this yields~\eqref{eq_hatcF_T_as_supremum}:
\begin{equation}
\hat\cF_T(\cM)
=
\sup_{\psi\in\bbH}\Big\{\frac{1}{T}(\psi,\cM)_{\bbH}+ \cV_T(\psi)-\frac{\|\psi\|^2_{\bbH}}{2T} -\frac{\|\cM\|^2_{\bbH}}{2T}\Big\}
-\inf\cF_T
.
\end{equation}
%
%
\end{proof}

 \appendix
  
\section{Proof of $L^\infty$ covariance bound}
The following statement is proven e.g. in~\cite[Lemma 5]{MR2291434} which goes back to \cite{MR1715549,MR1704666}. 
We prove a slightly stronger form below in \eqref{e:F2}. 
\begin{lemma}[Lemma 5 in~\cite{MR2291434}]
\label{lemm_BH}
Let $N \geq 1$ and $\mu$ be a probability measure on $\R^N$ satisfying a log-Sobolev inequality with constant $\gamma_{\rm LS}>0$. 
Take $F:\R^N\to\R$ to be smooth and compactly supported and a Lipschitz function $H:\R^N\to\R$.
Then:
\begin{equation} 
\label{e:F1}
\cov_{\mu}(F^2,H)^2
\leq 
\frac{4}{\gamma_{\rm LS}^2}  \sup_{x\in\R^N}|\nabla H (x)|^2 \, 
\E_{\mu}[F^2] \; \E_{\mu}\big[|\nabla F|^2\big],
\end{equation}
with the notation $|\nabla H (x)| = \big( \sum_{i =1}^N (\partial_{x_i}  H (x))^2 \big)^{1/2}$.
The same bound holds for a vector valued function $H(x)\in\R^d$.

Let $H_i:\R\to\R$ be Lipschitz ($1\leq i\leq N$) and let $F:\R^N\to\R$ be smooth and compactly supported. Then:
\begin{equation}
\label{e:F2}
\sum_{i=1}^N\cov_{\mu}(F^2,H_i(x_i))^2
  \leq 
  \frac{4}{\gamma_{\rm LS}^2} \max_{1\leq i\leq N}\|H'_i\|^2_\infty \;  \E_{\mu}[F^2] \;  \E_{\mu}\big[|\nabla F
  |^2\big] 
  .
\end{equation} 
\end{lemma}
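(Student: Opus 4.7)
The plan is to combine three standard ingredients: the Herbst argument (LSI gives subgaussian concentration), the variational/duality characterisation of entropy, and an optimisation over a free parameter. I'll handle the scalar bound \eqref{e:F1} first and then recover \eqref{e:F2} by a linear-combination trick.

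For \eqref{e:F1}, the starting point is the classical entropic inequality
\begin{equation}
\int fg\, d\mu \leq \ent_\mu(f) + \Big(\int f\, d\mu\Big) \log \int e^{g}\, d\mu,
\end{equation}
valid for $f\geq 0$ and bounded measurable $g$, which follows from the Gibbs variational principle. Applied with $f = F^2$ and $g = \lambda(H - \E_\mu[H])$ for a free parameter $\lambda \in \R$, this gives
\begin{equation}
\lambda\, \cov_\mu(F^2, H) \leq \ent_\mu(F^2) + \E_\mu[F^2]\cdot \log \E_\mu\big[e^{\lambda(H - \E_\mu H)}\big].
\end{equation}
The LSI directly bounds the first term by $(2/\gamma_{\rm LS})\E_\mu[|\nabla F|^2]$. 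For the second term I use Herbst's argument: applying LSI to $e^{\lambda H/2}$ and integrating the resulting differential inequality for $\Lambda(\lambda):=\log \E_\mu[e^{\lambda(H-\E_\mu H)}]$ yields the subgaussian bound
\begin{equation}
\Lambda(\lambda) \leq \frac{\lambda^2}{2\gamma_{\rm LS}} \sup_{x\in\R^N}|\nabla H(x)|^2.
\end{equation}
Plugging these in and optimising over $\lambda > 0$ (the right-hand side is of the form $A + \lambda^2 B$, so the best bound on $\cov_\mu(F^2,H)$ is $2\sqrt{AB}$) produces exactly \eqref{e:F1}. The vector-valued version of \eqref{e:F1} follows by applying the scalar result component-by-component and using Cauchy--Schwarz on the resulting sum.

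For \eqref{e:F2}, the key observation is that
\begin{equation}
\sum_{i=1}^N \cov_\mu(F^2, H_i(x_i))^2 = \sup_{\alpha\in\R^N,\ |\alpha|\leq 1}\ \cov_\mu\Big(F^2, \sum_{i=1}^N \alpha_i H_i(x_i)\Big)^2,
\end{equation}
since the supremum of $(\alpha, v)^2$ over the unit ball is $|v|^2$. For $H(x):=\sum_i \alpha_i H_i(x_i)$ with $|\alpha|\leq 1$, one has
\begin{equation}
|\nabla H(x)|^2 = \sum_{i=1}^N \alpha_i^2\, H_i'(x_i)^2 \leq \max_{1\leq i\leq N}\|H_i'\|_\infty^2,
\end{equation}
so the scalar bound \eqref{e:F1} applied to this $H$ gives a uniform bound over $\alpha$ in the unit ball, and taking the supremum yields \eqref{e:F2}.

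The only step that requires a little care is the Herbst argument, since $H$ is only Lipschitz rather than smooth; this is standard and is handled by a routine mollification of $H$, passing to the limit in the bound on $\Lambda(\lambda)$ (which only involves $\|\nabla H\|_\infty$). The optimisation over $\lambda$ is elementary, and the reduction \eqref{e:F1}$\Rightarrow$\eqref{e:F2} is purely algebraic, so no genuine difficulty arises.
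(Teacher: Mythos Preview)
Your proof is correct and takes a genuinely different route from the paper's. The paper argues via the Langevin semigroup $(P_t)$: it writes $\cov_\mu(F^2,H_i)=\int_0^\infty\int H_i'\,\partial_{x_i}P_tF^2\,d\mu\,dt$ by integration by parts, applies Cauchy--Schwarz against $P_tF^2$ to make the Fisher information $\int|\nabla P_tF^2|^2/P_tF^2\,d\mu$ appear, recognises this as $-\tfrac{d}{dt}\ent_\mu(P_tF^2)$, and then integrates by parts in $t$ using the exponential entropy decay that LSI implies; a final optimisation over an auxiliary exponential weight yields the constant $4/\gamma_{\rm LS}^2$. Your argument bypasses the semigroup entirely: you use the Gibbs variational formula for entropy to get $\lambda\,\cov_\mu(F^2,H)\le \ent_\mu(F^2)+\E_\mu[F^2]\Lambda(\lambda)$, bound the first term directly by LSI and the second by Herbst's subgaussian estimate, and optimise over $\lambda$. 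The reduction of \eqref{e:F2} to \eqref{e:F1} via $\sup_{|\alpha|\le 1}\cov_\mu(F^2,\sum_i\alpha_iH_i(x_i))^2$ is clean and, in fact, more transparent than the paper's direct semigroup computation for \eqref{e:F2}. Both approaches land on the same constant; yours is more elementary (no dynamics), while the paper's is closer in spirit to the Bakry--\'Emery machinery used elsewhere in the article. One small point: deducing the vector-valued version of \eqref{e:F1} ``component-by-component plus Cauchy--Schwarz'' can overshoot the constant unless the components have disjoint dependence; the cleaner route is the same unit-ball trick you use for \eqref{e:F2}, applied now in $\R^d$.
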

\begin{proof}
The statement \eqref{e:F1} in~\cite[Lemma 5]{MR2291434} only concerns real-valued $H$, 
but the claim with vector-valued $H$ with independent components is straightforward from the proof. 
We follow the same method  to prove \eqref{e:F2}.

Without loss of generality assume $\E_\mu[F]=1$. 
Let $(P_t)_{t\geq 0}$ denote the semi-group associated with the Langevin dynamics and let $L$ denote the corresponding generator. 
Then:
\begin{align}
\cov_\mu(F,H_i) 
&= 
\int_{\R^N} H_i(x_i) \big(P_0F(x) - P_\infty F(x)\big)\,d\mu (x)
=
- \int_0^\infty \int_{\R^N} H_i(x_i) LP_t F(x)\,d\mu(x)\, dt \nonumber\\
&=
\int_0^\infty \int_{\R^N} H'_i(x_i)\partial_{x_i}P_t F(x)\,d\mu(x)\, dt
\nnb
&\leq
\|H'_i\|_\infty\int_0^\infty \Big(\int_{\R^N} \frac{\big(\partial_{x_i}P_t F\big)^2}{P_tF}\,d\mu\Big)^{1/2}\, dt
,
\label{eq_CS_covariance_bound}
\end{align}
where we used the integration by parts formula $\E_{\mu}[FLG] = - \E_{\mu}[(\nabla F,\nabla G)]$ in the second equality, 
and Cauchy-Schwarz inequality with $\E_{\mu}[P_t F] = 1$ in the  inequality. 
Thus:
\begin{equation}
\sum_{i=1}^N\cov_\mu(F,H_i)^2
\leq
\max_{1\leq i\leq N}\|H'_i\|_\infty^2 \;
\sum_{i=1}^N\bigg[\int_0^\infty \Big(\int \frac{\big(\partial_{x_i}P_t F\big)^2}{P_tF}\,d\mu\Big)^{1/2}\, dt\bigg]^2
.
\end{equation}
To have the square go inside the time integral, 
let $\epsilon>0$ to be chosen later and write:
\begin{align}
\bigg[\int_0^\infty \Big(\int \frac{\big(\partial_{x_i}P_t F\big)^2}{P_tF}\,d\mu\Big)^{1/2}\, dt\bigg]^2
&=
\frac{1}{\epsilon^2}\bigg[\int_0^\infty \epsilon e^{-\epsilon t}\Big(e^{2\epsilon t}\int \frac{\big(\partial_{x_i}P_t F\big)^2}{P_tF}\,d\mu\Big)^{1/2}\, dt\bigg]^2\nonumber\\
&\leq 
\frac{1}{\epsilon^2}\int_0^\infty \epsilon e^{-\epsilon t}e^{2\epsilon t}\int \frac{\big(\partial_{x_i}P_t F\big)^2}{P_tF}\,d\mu \, dt
.
\end{align}
Summing over all $1\leq i\leq N$ yields:
\begin{equation}\label{eq_cov_continuous_0}
\sum_{i=1}^N\cov_\mu(F,H_i)^2
\leq \max_{1\leq i\leq N}\|H'_i\|_\infty^2 \;
\frac{1}{\epsilon^2}\int_0^\infty  dt \, \epsilon e^{\epsilon t} \,
\E_{\mu} \left[ \frac{|\nabla P_t F|^2}{P_tF} \right]  .
\end{equation}
Let $\Phi(u) = u\log u$ ($u>0$). 
Then:
\begin{equation}
\frac{d}{dt}  \E_{\mu}\Big[ \Phi(P_t F) \Big]           
= 
- \E_{\mu}\Big[\frac{|\nabla P_t F|^2}{P_tF}\Big]
,
\label{eq_time_der_entropy}
\end{equation}
and the log-Sobolev inequality for the measure $\mu$  implies
($F$ has average $1$ under $\mu$):
\begin{equation}
\E_{\mu} \big[ \Phi(P_t F) \big]  
\leq e^{-2\gamma_{\rm LS}\, t} \; \E_{\mu} \big[ \Phi(F)  \big]  
.
\end{equation}
For any $\epsilon<2\gamma_{\rm LS}$, 
an integration by parts in time in~\eqref{eq_cov_continuous_0} therefore yields:
\begin{align}
\sum_{i=1}^N\cov_\mu(F,H_i)^2
&\leq 
\frac{1}{\epsilon^2}\Big[-\epsilon e^{\epsilon t}  \, \E_{\mu} \big[ \Phi(P_t F) \big]   \Big]_{0}^\infty 
+ 
\frac{1}{\epsilon^2}\int_0^\infty \epsilon^2 e^{\epsilon t}  \, \E_{\mu} \big[ \Phi(P_t F) \big]  
\, dt\nonumber\\
&= 
\frac{1}{\epsilon} \, \E_{\mu} \big[ \Phi(F) \big]  
+
\int_0^\infty e^{\epsilon t} \, \E_{\mu} \big[ \Phi(P_t F) \big]   \, dt
\leq
\Big(\frac{1}{\epsilon} 
+ \frac{1}{2\gamma_{\rm LS}-\epsilon}\Big) \, \E_{\mu} \big[ \Phi(F) \big]  
.
\end{align}
The right-hand side is minimal when $\epsilon=\gamma_{\rm LS}$, 
in which case it equals $2(\gamma_{\rm LS})^{-1} \, \E_{\mu} \big[   \Phi(F) \big]$.
Applying the log-Sobolev inequality yields the desired estimate:
\begin{equation}
\sum_{i=1}^N\cov_\mu(F,H_i)^2
\leq 
\frac{4}{\gamma_{\rm LS}^2} \max_{1\leq i\leq N}\|H'_i\|^2_\infty \E_\mu \big[\, |\nabla\sqrt{F}|^2\, \big]
.
\end{equation}
\end{proof}

\section{XY model}
\label{append: XY}

We consider the mean-field   XY model defined on the periodic compact space 
$(x_1, \dots , x_N) \in [0,2 \pi)^N$ with $V=0, W = - W^-$ and  
\begin{equation}
W^-(x,y) = \cos(x-y) = \cos(x)\cos(y) + \sin(x)\sin(y).
\label{eq: XY W}
\end{equation}
In this case, the Hilbert space $\bbH_0$ \eqref{eq_def_Hilbert_space} reduces to 
variables $\psi = (\zeta_1, \zeta_2)$ associated with the
2 modes $n_1(x) = \cos(x), n_2 (x) = \sin(x)$.
Note also that $\alpha_V$ is simply the uniform measure on $[0,2 \pi]$ as $V=0$.

We will check that  strong convexity of the renormalised potential, and thus a uniform log-Sobolev inequality, hold up to the critical temperature $T_c = 1/2$ (see \cite{Kirkpatrick-Nawaz} for the analysis of the equilibrium phase transition). 
This statement was already derived in~\cite{MR3926125} (under the name $O(2)$-model) and we recall the proof below for the sake of completeness.
 
\medskip
 
As $W^+ =0$, $U^N_T$ defined in \eqref{eq_def_renormalised_potential_nonquad} is independent of $N$ and given by 
\begin{align}
U^N_T(\zeta_1, \zeta_2)
&=
- \log \E_{\alpha_V}\bigg [  \exp \Big(  \frac{\zeta_1}{T}  \cos(x) + \frac{\zeta_2}{T} \sin(x)  \Big) \bigg] + {\rm constant}.
\end{align}
Thus the renormalised potential defined in \eqref{eq: multi-mode counterpart} reads
\begin{align}
\mathcal V_T (\zeta_1, \zeta_2)
 =  \frac{( \zeta_1^2 + \zeta_2^2)}{2 T} 
 - \log \int_0^{2 \pi} dx  \exp \Big(  \frac{\zeta_1}{T}  \cos(x) + \frac{\zeta_2}{T} \sin(x)  \Big) + {\rm constant}.
\end{align}
For any vector $v = (v_1,v_2)$, the quadratic form associated with the Hessian is given by    
\begin{align}
\big( v , \He \mathcal V_T  \, v \big)
 =  \frac{|v|^2}{T} 
- \frac{1}{T^2} \var_{\mu^{(\zeta_1, \zeta_2)}_T} \Big[  (v, 
\begin{pmatrix}
   \cos(x)\\
   \sin(x)
\end{pmatrix} ) \Big]
\geq \big( \frac{1}{T} - \frac{1}{2 \, T^2} \big) |v|^2, 
\label{eq: hess XY}
\end{align}
where $\mu^{(\zeta_1, \zeta_2)}_T (x) \propto \exp \Big(  \frac{\zeta_1}{T}  \cos(x) + \frac{\zeta_2}{T} \sin(x)  \Big)$ and the last inequality comes from the uniform upper bound by 1/2  on the variance in \eqref{eq: hess XY} established in \cite[Theorem D.2]{MR0496246}.
 
 As a consequence for any $T>T_c = 1/2$, the renormalised potential $\mathcal V_T$ is strongly convex and  the log-Sobolev inequality holds for the XY model by Theorem \ref{theo_nonquadratic}. 
By Lemma~\ref{lemma_cvx_cF_is_cvx_cV}, this implies the uniform convexity of $\hat  \cF_T$ \eqref{eq: projection F_T} up to $T_c$.

\section*{Acknowledgements}

We thank Songbo Wang for many useful discussions and sending us an early draft of his results. 

R.B. acknowledges funding from NSF grant DMS-2348045.

B.D. acknowledges funding from the European Union's Horizon 2020 research and innovation programme under the Marie Sk\l odowska-Curie grant agreement No 101034255.

\bibliography{all}
\bibliographystyle{plain}

\end{document}